\let\oldtocsection=\tocsection
\let\oldtocsubsection=\tocsubsection
\let\oldtocsubsubsection=\tocsubsubsection
\renewcommand{\tocsection}[2]{\hspace{0em}\oldtocsection{#1}{#2}}
\renewcommand{\tocsubsection}[2]{\hspace{1em}\oldtocsubsection{#1}{#2}}
\renewcommand{\tocsubsubsection}[2]{\hspace{2em}\oldtocsubsubsection{#1}{#2}}
\date{}
\numberwithin{equation}{section}
\newtheorem{thm}{Theorem}[section]
\newtheorem{prop}[thm]{Proposition}
\tikzset{node distance=5cm, auto}
\newtheorem{lemma}[thm]{Lemma}
\newtheorem{defn}[thm]{Definition}
\newtheorem{cor}[thm]{Corollary}
\newtheorem{remark}[thm]{Remark}
\newtheorem{example}[thm]{Example}
\newtheorem{question}[thm]{Question}
\newcommand{\etale}{\'etal\@ifstar{\'e}{e\xspace}}
\DeclareSymbolFont{cyrletters}{OT2}{wncyr}{m}{n}
\DeclareMathSymbol{\Sha}{\mathalpha}{cyrletters}{"58}
\begin{document}

\title{The genus of division algebras over discrete valued fields }
\author{S. Srimathy}
\address{School of Mathematics\\
Tata Institute of Fundamental Research\\
Mumbai \\
India}
\email{srimathy@math.tifr.res.in}

\begin{abstract}
Given a field with a set of discrete valuations $V$, we show how the genus of a division algebra over the field  is related to the genus of the residue algebras at various valuations in $V$ and the ramification data. When the division algebra is a quaternion, we show the triviality of genus over many fields  which include higher local fields, function fields of curves over higher local fields  and function fields of curves over real closed fields.  We also consider  function fields of  curves over global fields with a rational point  and show how the genus problem is related to the $2$-torsion of the Tate-Shafarevich  group of its Jacobian. As a special case, we show how the methods developed yield better bounds on the size of the genus over function fields of  elliptic curves and demonstrate how they can be computed  directly using arithmetic data of the elliptic curve  with a number of examples.
\end{abstract}
\maketitle

\tableofcontents
\section{Introduction}\label{sec:intro}
 Given a finite dimensional division algebra $D$ over $K$, one can ask  how much of information is carried by the collection of  maximal subfields that $D$ contains. Let $[D]$ denote the class of $D$ in $Br(K)$.  Then the \textit{genus} of $[D]$, denoted $gen([D])$  is the collection of all classes in $Br(K)$ whose underlying division algebras share the the same  maximal subfields as $D$. One can  ask if this collection is finite and if so what is its size. This has been the studied extensively over the past few years (see \cite{rapinchuk_russian}, \cite{rapinchuk_genus_unramified}, \cite{size_genus}, \cite{RR_genus}, \cite{recent_genus}) and plays an important role in the analysis of weak commensurability of Zariski-dense subgroups of the corresponding algebraic group (\cite[Remark 5.4]{pr2}, \cite[\S6]{pr1}).   \\
 \indent Note that if $[D]$ is of exponent $n\geq 3$ in $Br(K)$,  for every $i$ coprime to $n$, the underlying  division algebra  of $D^{\otimes i}$ has the same collection of   maximal subfields as $D$.  In particular, the  $gen([D])$ contains classes other than $[D]$ unless $[D]$ is of exponent $2$.  In the case of finitely generated fields of characteristic coprime to the exponent of $[D]$, it shown in \cite{RR_genus}  that $gen([D])$ is always finite and explicit upper bounds on the size of the genus are given for many cases in \cite{size_genus}. When $D$ is a quaternion algebra or in general  an exponent $2$ division algebra, a natural question is to know if  $gen([D])$ is always trivial i.e is a singleton set. In other words, we may ask if  quaternion algebras are completely determined by the collection of maximal subfields contained in them. Although the answer is no in general, the non-triviality of the genus for quaternions  is known to happen over very large fields of infinite transcendence degree (\cite[\S 2]{garibaldi_saltman}). So one may wonder if it is trivial for reasonably nice fields.  The  answer to this question is unknown although this is known to be true for some  fields. Examples include  global fields (\cite[\S 3.6]{rapinchuk_genus_unramified}) and more generally for \textit{transparent fields}  (\S6 in \cite{garibaldi_saltman}). Several other examples can be found in \cite{recent_genus}. Moreover it is shown in \cite[Theorem 3.5]{rapinchuk_genus_unramified} that  when $char~K \neq 2$, the property of genus being trivial for exponent $2$ algebras is stable under purely transcendental extensions.   \\
 \indent An important tool used in the literature so far for genus computations is the \textit{unramified Brauer group} of $K$ with respect to a set of discrete valuations $V$, denoted by $Br(K)_{V}$. For example, it is shown in \cite[Theorem 2.2]{rapinchuk_genus_unramified} that if $V$ satisfies certain conditions and if  the $n$-torsion component ${}_nBr(K)_{V}$ is finite then for any division algebra $D$ of exponent and degree $n$, its genus is upper bounded by $|{}_nBr(K)_{V}| \cdot \phi(n)^r$ where $\phi$ is the Euler's Totient function and $r$ is the number of ramification places of $D$. While this bound is extremely useful in showing finiteness of genus and in bounding its size, it does not give any explicit description of the elements in the genus.  Moreover, often times the bound given by ${}_nBr(K)_{V}$ is loose and sometimes ${}_nBr(K)_{V}$ is not finite for some fields that arise naturally in arithmetic geometry.  For example, take $K = \mathbb{Q}(\!(t)\!)$ with $V$ being  the $t$-adic valuation. Then by \cite[ Chapter XXII, Theorem 2]{serre_local}) we have $Br(\mathbb{Q}) \simeq Br(\mathbb{Q}(\!(t)\!))_{V} $ and hence $ {}_nBr(K)_{V}$ is not finite. Another example is when $K = k(\!(t)\!)(C)$ and $V$ is the set of discrete valuations arising from codimension one points of some  regular proper model of $C$ over $k[[t]]$. Then $Br(k) \subseteq Br(K)_{V}$ and therefore $ {}_nBr(K)_{V}$ is not finite if ${}_nBr(k)$ is not finite (this happens for example when $k= \mathbb{Q}$).  \\
\indent  In this paper, we define the \textit{separable genus} of $[D]$, denoted $gen^s([D])$ to be  collection of all classes of division algebras  in $Br(K)$  whose underlying division algebras share the same \textit{separable}  maximal subfields as $D$.  For simplicity,  instead of using the term "separable genus", we simply refer to it as the "genus".  We include separability assumption in our definition of genus to be consistent with the definition of genus of the corresponding algebraic group $SL_{1,D}$ (see Remark 5.1 in \cite{rapinchuk_genus_unramified}) and also to be consistent with the notion defined in \cite{recent_genus}.  Note that when $char~K$ is zero or is coprime to the degree $n$ of $D$ (which is assumed for the fields considered in \cite{rapinchuk_genus_unramified}), every maximal subfield of $D$ is separable over $K$ and the two notions coincide i.e, $gen^s([D]) = gen([D])$.  We study the genus of  tame division algebras over discrete valued fields that are not necessarily finitely generated. We also do not assume that ${}_nBr(K)_V$ is finite. We show how the genus  over these fields is related to  the genus of  residue  algebras and the ramification data.  In many cases, we give explicit description of the elements in the genus and show that we get better bounds on its size than known before. Roughly speaking, the idea behind our arguments is to describe the genus of the division algebra locally in terms of the genus of its "unramified component" and the "ramified component". The genus of the unramified component  is directly related to the genus of the associated residue algebra which can be computed either directly or through an induction process. Once we have local information on the genus, we use some kind of  local-global information to compute the genus globally. \\
\indent We use the above techniques to give a short proof of the \textit{Stability theorem} (\cite[Theorem 3.5]{rapinchuk_genus_unramified}) for  purely transcendental extensions in \S \ref{sec:purely_trans}. We also give an explicit  description for the elements in the genus of division algebras over $k(x)$ where $k$ is a number field  and show that the bounds on the size of the genus thus obtained are tighter than known before. Next in \S \ref{sec:gcdvf}, we analyze the genus problem for tame algebras over complete discrete valued fields and show how it is related to the genus of residue algebras. In this section, we also consider a different notion of genus which we call as the \textit{splitting genus}, denoted by $gen^s_{spl}$. By definition $gen^s_{spl}([D])$ is the collection of  classes of division algebras sharing the same separable finite dimensional splitting fields with $D$\footnote{A similar notion without separability assumption is considered in \cite{gen_spl_krashen}}. Based on a decomposition lemma, we describe elements in $gen^s_{spl}([D])$ in terms of the genus of its ramified and unramified components.  Finally, in \S \ref{sec:qdvt}, we apply the above techniques for the special case of quaternion algebras  over a field $K$ with a set of discrete valuations $V$. For $v \in V$, let $K_v$ and $\overline{K_v}$ denote respectively the completion and residue field of $K$ with respect to $v$. Assume for every $v \in V$, that $\overline{K_v}$ satisfies the property that the genus of any quaternion algebra over $\overline{K_v}$ is trivial.  Then we show that  for a quaternion $Q$ over $K$ that is tame with respect to $V$, any element in $gen^s([Q])$  is Brauer equivalent to $Q \otimes_K Q'$ for some $[Q'] \in {}_2\Sha^{Br}(K, V)$. Here
\begin{align*}
   {}_2\Sha^{Br}(K, V)  := ker( {}_{2}Br(K) \rightarrow \prod_{v \in V}{}_2Br(K_v))
\end{align*}
is the $2$-torsion component of the kernel of the local-global map on the Brauer group with respect to $V$.  Hence we get that $|gen^s([Q])| \leq  |{}_2\Sha^{Br}(K, V)|$.   Comparing with  the bound $|gen^s([Q])| \leq |{}_2Br(K)_{V}|$ given in    \cite[Theorem 2.2]{rapinchuk_genus_unramified}, we see that we get a better upper bound on the genus  since  the residue maps factor through the local-global maps and therefore ${}_2\Sha^{Br}(K, V) \subseteq {}_2Br(K)_{V}$ (see (\ref{eqn:sha_unramified})). Continuing with our examples before,  for $K=\mathbb{Q}(\!(t)\!)$ with $V$ being the obvious $t$-adic valuation, we get  ${}_2\Sha^{Br}(K, V) =0$ trivially and  for $K=\mathbb{Q}(\!(t)\!)(C)$  with  $V$ the set of discrete valuations arising from codimension one points of some  regular proper model of $C$ over $k[[t]]$, ${}_2\Sha^{Br}(K, V)$  is also trivial by \cite[Theorem 4.2 and Theorem 4.3 (ii)]{parimala_local_global}. Hence $gen^s([Q])$ is trivial for any quaternion over $K$ for the above fields although ${}_2Br(K)_{V}$ is not even finite as shown before.  We give a number of examples of fields over which  the genus of any quaternion is trivial using the above techniques. These include higher local fields, function fields of curves over higher local fields (these are  semi-global fields and have been extensively studied in over the recent years)  and function fields of curves over real closed fields.\\
\indent In the special case when $K$ is the function field of a curve $C$ over a global field with a rational point, we show that the size of genus of any quaternion algebra over $K$ is bounded by size of the $2$-torsion subgroup of the Tate- Shafarevich group of the Jacobian, ${}_2\Sha(J_C)$. In particular, when $C =E$ is an elliptic curve we have $|gen^s([Q])| \leq |{}_2\Sha(E)|$ and therefore is trivial whenever ${}_2\Sha(E)$ is trivial. In  the cases where $ {}_2\Sha(E)$ is not trivial, we demonstrate  how the bounds on the genus can be improved by directly using the arithmetic properties of $E$. 

\section{Notation}
Although the term genus was originally coined for division algebras in \cite{rapinchuk_genus_unramified}, in this paper, we denote by $gen^s(\alpha)$ (resp. $gen^s_{spl}(\alpha)$) for $\alpha \in Br(K)$,  the Brauer classes whose underlying division algebras have   the same   separable maximal subfields (resp. same finite dimensional separable splitting fields) as the underlying division algebra of  $\alpha$. This is a well defined notion and makes it convenient to talk about the genus of a Brauer class.\\
\indent  For a division algebra $D$ over $K$, $[D]$ denotes its Brauer class in $Br(K)$. We write $D\sim E$ if $[D] = [E]$ in $Br(K)$. If $D$ is a valued division algebra, its residue algebra is denoted by $\overline{D}$. If  $K$ has a  discrete valuation $v$, $D_v$ denotes $D \otimes_K K_v$ and  $\overline{D_v}$ denotes the residue algebra of the underlying division algebra of  $D_v$.  If $K$ contains a primitive $n$-th root of unity $\omega$, the symbol algebra  generated  by $i,j$  over $K$ with the relations $i^n =a, j^n=b, ij = \omega ji$ is denoted by $(a,b)_{\omega}$. \\
\indent The $n$-torsion part of a group $G$ is denoted by ${}_nG$. For $g\in G$, $ord(g)$ denotes the order of $g$. The symbol $(m,n)$ denotes the greatest common divisor of $m$ and $n$. The separable closure of $K$ is denoted by $K^{sep}$. All the valuations considered in this paper are discrete.

\section{Main Results}
For a division algebra $D$ over a field  $k$, let $\tilde{D} : = D\otimes_k k(x)$. 
\begin{thm} [{Genus over purely transcendental extension of number fields, \S\ref{sec:purely_trans}}] Let $k$ be a number field containing a primitive $n$-th root of unity $\omega$. For any $[D]\in {}_nBr(k(x))$, write $[D] \simeq [\tilde C \otimes R_1 \otimes R_2 \otimes \cdots \otimes R_{r}]$ where $[C] \in {}_nBr(k)$, each $[R_i]$ is ramified at exactly one  point in $\mathbb{P}_0^1 -\infty$ and nowhere else  and $[R_i] \sim \otimes_j(f_{ij}, g_{ij})_{\omega}$ where for every $j$ either $f_{ij}$ or $g_{ij}$ is a non-constant monic polynomial in $k[x]$(this is possible by Lemma \ref{lem:decom}). Then
\begin{align*}
    gen^s([D]) \subseteq \{ [\tilde{C'} \otimes R_1^{i_1} \otimes R_2^{i_2} \otimes \cdots \otimes R_r^{i_r}] | [C'] \in gen^s([C]) \text{~and~} (i_l, ord([R_l])) =1 ~\forall 1\leq l\leq r \}
\end{align*}
In particular, 
\begin{align*}
    |gen^s([D])| \leq |gen^s([C])|\phi(n)^r
\end{align*}
where $\phi$ is the Euler's Totient function. 
\end{thm}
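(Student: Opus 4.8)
The plan is to follow the local-to-global strategy announced in the introduction. Let $V$ be the set of discrete valuations of $k(x)$ coming from the closed points of $\mathbb{P}^1_k$, and let $[D']\in gen^s([D])$. The first step is to pin down the ramification of $[D']$. The key input from \S\ref{sec:gcdvf} is that, over a complete discretely valued field, a tame class splits into an unramified and a ramified component, $gen^s$ of the unramified component is governed by $gen^s$ of the residue algebra, and the ramified component can only be altered by a coprime power; combining this with an approximation argument (a separable maximal subfield of $\overline{D_v}$ lifts to an unramified, hence maximal, separable subfield of $D\otimes_{k(x)}k(x)_v$, which one approximates by a maximal subfield of $D$ over $k(x)$) yields, for every $v\in V$: $[\overline{D'_v}]\in gen^s([\overline{D_v}])$ over $\kappa(v)$, and $\partial_v([D']) = i_v\,\partial_v([D])$ for some $i_v$ coprime to $\mathrm{ord}\,\partial_v([D])$. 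Since $\tilde C$ is unramified at every closed point of $\mathbb{P}^1_k$ while each $R_l$ is unramified outside $v_l$ and $\infty$, it follows that $[D']$ is unramified at every closed point of $\mathbb{A}^1_k$ outside $\{v_1,\dots,v_r\}$, and that $\partial_{v_l}([D']) = i_l\,\partial_{v_l}([R_l])$ with $(i_l,\mathrm{ord}([R_l])) = 1$ (using that Lemma \ref{lem:decom} may be taken to furnish $R_l$ with $\mathrm{ord}([R_l]) = \mathrm{ord}\,\partial_{v_l}([R_l])\mid n$).

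Next I would split off the constant part. Set $[E] := [D'\otimes R_1^{-i_1}\otimes\cdots\otimes R_r^{-i_r}]$. By the previous paragraph $[E]$ has trivial residue at each $v_l$ and is unramified at every other closed point of $\mathbb{A}^1_k$; and it is unramified at $\infty$ too, since Faddeev reciprocity $\sum_v \partial_v = 0$ on $\mathbb{P}^1_k$, together with the matching of the finite residues of $[D']$ and of $\bigotimes_l R_l^{i_l}$, forces $\partial_\infty([E]) = 0$ (here $\kappa(\infty) = k$, so the relevant residue map is injective). Hence $[E]$ is unramified on all of $\mathbb{P}^1_k$, so by Faddeev's exact sequence $[E] = [\tilde{C'}]$ for a unique $[C']\in{}_nBr(k)$; that is, $[D'] = [\tilde{C'}\otimes R_1^{i_1}\otimes\cdots\otimes R_r^{i_r}]$, which is the asserted form. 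The size bound $|gen^s([D])|\le |gen^s([C])|\,\phi(n)^r$ then follows at once, because $(i_l,\mathrm{ord}([R_l])) = 1$ and $\mathrm{ord}([R_l])\mid n$ leave at most $\phi(\mathrm{ord}([R_l]))\le\phi(n)$ values for each $i_l$.

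It remains to prove $[C']\in gen^s([C])$, and this is where I expect the genuine work to lie. For a $k$-rational point $a\in\mathbb{A}^1(k)$ with $a\notin\{v_1,\dots,v_r\}$, both $[D]$ and $[D']$ are unramified at $v_a$, whose residue field is $k$, and the residue algebras are the specializations $[\overline{D_{v_a}}] = [C] + \sum_l\rho_l(a)$ and $[\overline{D'_{v_a}}] = [C'] + \sum_l i_l\,\rho_l(a)$ in ${}_nBr(k)$, where $\rho_l(a) := \big[\bigotimes_j (f_{lj}(a),g_{lj}(a))_\omega\big]$. Applying the residue-compatibility of Step~1 at $v_a$ gives $[\overline{D'_{v_a}}]\in gen^s([\overline{D_{v_a}}])$ over $k$. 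The crux is to produce a single $a$ for which $\rho_l(a) = 0$ for all $l$ simultaneously — equivalently, to arrange, via a suitable normalization in Lemma \ref{lem:decom} (absorbing the discrepancy into $C$), that all the $R_l$ specialize trivially at one common rational point — for then this membership reads exactly $[C']\in gen^s([C])$. I expect this simultaneous-vanishing step to be the main obstacle, rather than the ramification bookkeeping: it requires a Hilbert-irreducibility or weak-approximation type input, exploiting that each $[R_l]$ is unramified away from a single closed point and that $\mathbb{A}^1_k$ carries plenty of rational points. Granting it, the displayed inclusion is established.
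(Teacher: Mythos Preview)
Your first two paragraphs --- pinning down the ramification of $[D']$ and writing it as $[\tilde{C'}\otimes R_1^{i_1}\otimes\cdots\otimes R_r^{i_r}]$ via Faddeev --- are essentially what the paper does. The paper quotes \cite[Lemma~2.5]{rapinchuk_genus_unramified} directly rather than routing through \S\ref{sec:gcdvf}, and it does not need your reciprocity detour at $\infty$ because the Faddeev sequence (\ref{eqn:faddeev}) already runs only over $\mathbb{P}^1_0\setminus\{\infty\}$; but the content is the same.

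The gap is exactly where you locate it, yet the paper's resolution is \emph{not} the one you anticipate. The paper never produces a single rational point $a$ at which all the $\rho_l(a)$ vanish in $Br(k)$, and there is no evident reason such a point exists; the conditions ``$\rho_l(a)=0$ in $Br(k)$'' are global norm-type conditions, and neither Hilbert irreducibility nor weak approximation supplies a point satisfying all of them at once. Your alternative of re-normalising the decomposition so as to absorb the specialisation into $C$ is also problematic, because $[C]$ is fixed by the statement and changing it changes the target $gen^s([C])$.

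What the paper actually does is work one place of $k$ at a time. For each finite place $v$ of $k$ it produces (Lemma~\ref{lem:double_val}) a rational point $P_v\in\mathbb{A}^1(k)$, depending on $v$, at which every $[R_l]$ is unramified and its specialisation becomes trivial in $Br(k_v)$ --- not in $Br(k)$. The trick is to take $P_v=\pi_v^{-N}$ for a uniformiser $\pi_v$ and $N$ large and divisible by $n$: then each monic $g_{lj}(P_v)$ is an $n$-th power times a unit $\equiv 1$ modulo a high power of $\pi_v$, so once $N$ exceeds all relevant conductors every symbol $(f_{lj}(P_v),g_{lj}(P_v))_\omega$ splits over $k_v$. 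At such a $P_v$ the residue classes over $k_v$ are $[\overline{D_{P_v}}]_v\sim[C]_v$ and $[\overline{D'_{P_v}}]_v\sim[C']_v$, and Lemma~\ref{lem:imp} then gives $[C']_v\in gen^s([C]_v)$ for every finite $v$. Since period equals index over the local field $k_v$, this forces $[C']_v=[C]_v^{\,j_v}$ with $(j_v,n)=1$; and because $k$ contains a primitive $n$-th root of unity with $n\geq 3$ (so $k$ has no real places), a local--global argument (\cite[\S18.4, Corollary~b]{pierce_assoc}) assembles these into $[C']\in gen^s([C])$. This two-layer move --- making the ramified part vanish only \emph{locally} at a place-dependent rational point, then passing from local to global for $gen^s$ over the number field $k$ --- is the missing idea in your outline.
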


Now assume that $k$ is a complete discrete valued field with residue $\overline{k}$.  Let $SBr(k)$ and $IBr(k)$ denote respectively the subgroup of  inertially  split (tame) and inertial (unramified) classes in $Br(k)$(\S \ref{sec:cdvf}).  

\begin{thm}[{Theorem \ref{thm:sub_genus}}]
Let  $[D] \in SBr(k)$  be of  prime index $p$. Then 
\begin{align*}
gen^s([D])\begin{cases}
\subseteq  \{ [C] \in IBr(k) | [\overline{C}] \in gen^s([\overline{D})]\} \text{~~if $[D]$ is unramified~~}\\
 =\{[D^{\otimes i}]| 1 \leq i \leq p-1\} \text{~~else}
\end{cases}
\end{align*}
In particular, 
\begin{align*}
|gen^s([D])| \begin{cases}
\leq|gen^s(\overline{D})|\text{~~if $[D]$ is unramified~~}\\
= p-1\text{~~else}
\end{cases}
\end{align*}
If $\overline{k}$ is perfect, then $\subseteq$  and $\leq$  in the above  expressions are equalities. Moreover,  all the above statements also hold if $gen^s([D])$ is replaced with $gen^s_{spl}([D])$.
\end{thm}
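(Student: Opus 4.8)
The plan is to analyze the division algebra $D$ via its decomposition as an inertially split (tame) class over the complete discrete valued field $k$. Recall that for $[D] \in SBr(k)$ of prime index $p$, the structure theory of tame division algebras gives us a ramification invariant: the residue class $[\overline{D}] \in Br(\overline{k})$ (or more precisely, the associated cyclic extension of $\overline{k}$ of degree dividing $p$ coming from the ramification), together with the "unramified part." The dichotomy in the statement reflects exactly whether the ramification is trivial ($[D]$ unramified, so $[D] \in IBr(k)$) or not (in which case, by primality of $p$, the ramification extension has full degree $p$). I would first set up this decomposition carefully, citing the relevant structure results for $SBr(k)$, and reduce the problem to understanding which Brauer classes can share all separable maximal subfields (resp.\ separable splitting fields) with $D$.

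The key observation for the \textbf{unramified case} is that a separable maximal subfield $L/k$ of the degree-$p$ division algebra $D$ corresponds, under the residue map, to data over $\overline{k}$: an unramified maximal subfield descends to a degree-$p$ separable extension of $\overline{k}$ splitting $\overline{D}$, while a ramified one contributes the ramification extension. Since $[D]$ is unramified, every element $[C]$ sharing all separable maximal subfields with $D$ must itself be unramified — otherwise $D$ would embed a ramified maximal subfield that $C$ cannot split, or vice versa — which gives $[C] \in IBr(k)$; and then comparing the unramified (and the tamely ramified) maximal subfields of $D$ and $C$ forces $[\overline{C}] \in gen^s([\overline{D}])$ via the bijection between unramified separable subfields over $k$ and separable subfields over $\overline{k}$. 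When $\overline{k}$ is perfect, this correspondence is tight enough (every subfield is separable, and Witt/inertial lifting is an exact equivalence) to run in reverse, giving the equality. The containment $|gen^s([D])| \leq |gen^s(\overline{D})|$ is then immediate.

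For the \textbf{ramified case}, $[D]$ has a nontrivial ramification of degree $p$, and I would argue that this ramification extension $E/\overline{k}$ (cyclic of degree $p$) is itself pinned down by the separable maximal subfields of $D$: the unramified lift of $E$ is such a subfield, and conversely the unique unramified maximal subfield forces $E$. Hence any $[C]$ in the genus has the same ramification data and the same unramified maximal subfields, so $[C]$ and $[D]$ differ by an unramified class that is split by the same degree-$p$ extension; a counting/exponent argument using $\mathrm{ord}([D]) = p$ then shows $[C] = [D^{\otimes i}]$ for some $1 \leq i \leq p-1$, and conversely each such power visibly shares all maximal subfields with $D$ (same underlying division algebra up to the standard $i$-th power trick), giving the exact equality $gen^s([D]) = \{[D^{\otimes i}] : 1 \leq i \leq p-1\}$, of size $p-1$.

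The main obstacle I anticipate is the \emph{separability} bookkeeping when $\overline{k}$ is \emph{not} perfect: the correspondence between separable maximal subfields of $D$ over $k$ and separable subfields of $\overline{D}$ over $\overline{k}$ may fail to be a bijection (a separable subfield upstairs could have inseparable residue, or one may not be able to lift), which is precisely why only the inclusion $\subseteq$ (and $\leq$) holds in general and equality requires perfectness. Getting the inclusions to go in the right direction in the imperfect case — ensuring that enough separable subfields exist to constrain $[C]$ without needing the reverse lifting — is the delicate point. Finally, for the $gen^s_{spl}$ version, I would observe that the same arguments apply essentially verbatim: the ramification data and the unramified part are equally detected by separable splitting fields (a splitting field need not be maximal, but for index $p$ the relevant degree-$p$ separable splitting fields play the same role), so the decomposition lemma and the exponent argument carry over without change.
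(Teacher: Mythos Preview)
Your treatment of the unramified case is essentially the paper's approach: the paper packages the correspondence you describe as Proposition~\ref{prop:inertial_residue}, built on Lemma~\ref{lem:imp2}, and your reasoning about separable subfields lifting and descending, with perfectness of $\overline{k}$ needed for the reverse direction, matches it.

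The ramified case, however, has a genuine gap. You correctly observe that any $[C] \in gen^s([D])$ must have the same ramification character up to a prime-to-$p$ power, so that $[C \otimes D^{(\otimes i)\,op}]$ is unramified for some $i$ coprime to $p$. But you then try to kill this unramified difference by noting it is split by ``the same degree-$p$ extension'' (you seem to mean the unramified maximal subfield, the inertial lift of $E = Z(\overline{D})$) and invoking a ``counting/exponent argument.'' This does not work: an unramified Brauer class split by a given unramified degree-$p$ extension need not be trivial --- there are plenty of such classes in $IBr(k)$ in general --- and knowing $\mathrm{ord}([D]) = p$ does not help you conclude the difference vanishes.

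The paper's key move, which you are missing entirely, is to use a \emph{totally ramified} separable maximal subfield of $D$. Since $e_D = p$ and $[\overline{D}:\overline{k}] = p$ with $\overline{D}$ a field, $D$ is NSR, and the paper shows (Appendix~\ref{appendix:totram}) that $D$ contains a totally ramified \emph{separable} maximal subfield $L$. Then $[C \otimes D^{(\otimes i)\,op}]$ is unramified \emph{and} split by the totally ramified extension $L$; Lemma~\ref{lem:totram} (an unramified division algebra split by a totally ramified extension is trivial) immediately gives $[C] = [D^{\otimes i}]$. This is Proposition~\ref{prop:genusramified}. You never invoke the totally ramified maximal subfield, and without it your argument does not close.
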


As an easy consequence,  we get
\begin{cor} [{Corollary \ref{cor:sub_genus_up}}]
Let $\overline{k}$ satisfy the property that  the genus (resp. splitting genus) is trivial for any quaternion algebra over $\overline{k}$. Then the genus (resp. splitting genus) of any tame quaternion algebra over $k$ is trivial.
\end{cor}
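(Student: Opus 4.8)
The plan is to reduce everything to Theorem~\ref{thm:sub_genus} by a short case analysis on the index of the quaternion algebra and, in the nontrivial case, on whether it is ramified. First I would dispose of the split case: if $Q$ is split, its underlying division algebra is $k$ itself, whose only maximal subfield is $k$, so the only Brauer class sharing its separable maximal subfields — equivalently, its finite-dimensional separable splitting fields — is the split class, and $gen^s(Q)$ (resp. $gen^s_{spl}(Q)$) is the singleton $\{0\}$. So from now on I may assume $Q$ is a division algebra; since $\deg Q = 2$ it has prime index $p = 2$, and since $Q$ is tame, $[Q] \in SBr(k)$. Hence Theorem~\ref{thm:sub_genus} applies with $p = 2$.

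If $[Q]$ is ramified, Theorem~\ref{thm:sub_genus} gives $gen^s([Q]) = \{[Q^{\otimes i}] \mid 1 \leq i \leq p-1\} = \{[Q^{\otimes 1}]\} = \{[Q]\}$, and likewise $gen^s_{spl}([Q]) = \{[Q]\}$; so the ramified case is settled with no further input.

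If $[Q]$ is unramified, i.e. $[Q] \in IBr(k)$, Theorem~\ref{thm:sub_genus} yields the inclusion $gen^s([Q]) \subseteq \{[C] \in IBr(k) \mid [\overline{C}] \in gen^s([\overline{Q}])\}$, and similarly with $gen^s_{spl}$ throughout. At this point I would invoke two standard facts about inertial classes over a complete discretely valued field, set up in \S\ref{sec:cdvf}: the residue map $IBr(k) \to Br(\overline{k})$, $[C] \mapsto [\overline{C}]$, is injective and preserves index; consequently $\overline{Q}$ is a quaternion division algebra over $\overline{k}$. By the hypothesis on $\overline{k}$ we have $gen^s([\overline{Q}]) = \{[\overline{Q}]\}$ (resp. $gen^s_{spl}([\overline{Q}]) = \{[\overline{Q}]\}$), so any $[C]$ in the genus satisfies $[\overline{C}] = [\overline{Q}]$, and injectivity of the residue map forces $[C] = [Q]$. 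Thus $gen^s([Q]) = \{[Q]\}$, and the same argument gives $gen^s_{spl}([Q]) = \{[Q]\}$. Combining the split, ramified, and unramified cases proves the corollary.

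\textbf{Main obstacle.} There is essentially no obstacle once Theorem~\ref{thm:sub_genus} is available; the proof is a bookkeeping argument. The only points needing care are the elementary structural facts that the residue algebra of an unramified quaternion is again a quaternion over $\overline{k}$ and that reduction is injective on inertial classes — both classical for complete discretely valued fields and recorded in \S\ref{sec:cdvf}. Note that only the inclusion in Theorem~\ref{thm:sub_genus} (not the equality, which requires $\overline{k}$ perfect) is needed here, since triviality of the genus is an upper-bound assertion.
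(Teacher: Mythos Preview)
Your proposal is correct and follows exactly the approach the paper intends: the paper presents this corollary with the single line ``The case $p=2$ yields:'' immediately after Theorem~\ref{thm:sub_genus}, so it is treated as an immediate specialization. Your write-up simply unpacks that specialization carefully --- disposing of the split case (needed since Theorem~\ref{thm:sub_genus} is stated for prime index), reading off the ramified case from $\{[Q^{\otimes i}]\mid 1\le i\le p-1\}=\{[Q]\}$ at $p=2$, and in the unramified case combining the inclusion of Theorem~\ref{thm:sub_genus} with the hypothesis on $\overline{k}$ and the injectivity of $IBr(k)\to Br(\overline{k})$ from (\ref{eqn:unramified_residue}).
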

For a division algebra $D$, let $e_D$ denote the ramification index of $D$.  Recall the definition of $gen^s_{spl}$ from \S\ref{sec:intro}.

\begin{thm} [{Genus decomposition for $gen^s_{spl}$, Theorem  \ref{thm:genus_cdvf}}]
Let   $I$ be unramified and $N$ be  NSR (\S\ref{sec:cdvf}) division algebras over $k$.  Then 
\begin{align*}
    gen^s_{spl}([I \otimes_k N]) \subseteq \{ [I' \otimes_k N']~ |~ [I']  \in gen^s([I]) \text{~~  and~~} [N'] \in gen^s([N])\}
\end{align*}
\end{thm}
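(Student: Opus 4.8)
The plan is to start from a division algebra $D'$ whose class lies in $gen^s_{spl}([I\otimes_k N])$, show that $D'$ is again tame and hence splits as an inertial part tensor an NSR part, and then match each part against $I$ and against $N$ by exploiting the fact that unramified and tamely ramified extensions ``see'' the two components differently.

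First I would reduce to the case where $D'$ itself is tame and extract its decomposition. Since $I$ is inertial and $N$ is NSR, $I\otimes_k N$ is inertially split and $\mathrm{ind}(I\otimes_k N)$ is prime to $\mathrm{char}\,\overline k$; a maximal separable subfield $M$ of $I\otimes_k N$ has $[M:k]=\mathrm{ind}(I\otimes_k N)$ and splits $I\otimes_k N$, hence splits $D'$, so $\mathrm{ind}(D')\mid [M:k]$ is prime to $\mathrm{char}\,\overline k$ and $D'$ is tame over $k$. By the Jacob--Wadsworth decomposition (equivalently, by the decomposition lemma) I may then write $[D']=[I'\otimes_k N']$ with $I'$ inertial, $N'$ NSR, the decomposition being index-respecting: $\mathrm{ind}(D')=\mathrm{ind}(I')\,\mathrm{ind}(N')$. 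Running the same argument with a maximal separable subfield of $D'$ gives $\mathrm{ind}(D')=\mathrm{ind}(I\otimes_k N)$, so $M$ is in fact a maximal subfield of $D'$ as well, and $D'$ and $I\otimes_k N$ share all their maximal separable subfields.

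Next I would match the NSR parts. By the decomposition lemma — whose content is that a finite separable $L/k$ splits $I\otimes_k N$ precisely when it splits both $I$ and $N$, and likewise for $I'\otimes_k N'$ — I specialize $L$ to totally tamely ramified extensions, which are invisible to inertial classes, and use the residue (ramification) homomorphism $\rho\colon SBr(k)\to\chi(\overline k)$ that kills $IBr(k)$; from the coincidence of splitting fields I would read off $\rho([N])=\rho([I\otimes_k N])=\rho([I'\otimes_k N'])=\rho([N'])$, so $N$ and $N'$ carry the same ramification character. Combined with the structure theory of NSR algebras and the equality of maximal subfields established above, this pins $N'$ down well enough to conclude $[N']\in gen^s([N])$. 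Then I would split off the inertial part: choosing an unramified extension $M_0/k$ that splits $N$ (hence $N'$), e.g. the unramified lift of the cyclic extension attached to $\rho([N])$, base change to $M_0$ turns the two classes into the inertial classes $[I\otimes_k M_0]$ and $[I'\otimes_k M_0]$, the coincidence of separable splitting fields descends to $M_0$ and, via the residue, to a comparison of $[\overline I]$ and $[\overline{I'}]$ over $\overline{M_0}$, so that the inclusion part of Theorem~\ref{thm:sub_genus} (the genus of an inertial algebra is governed by the genus of its residue) yields $[I']\in gen^s([I])$. Assembling, $[D']=[I'\otimes_k N']$ with $[I']\in gen^s([I])$ and $[N']\in gen^s([N])$, which is the asserted inclusion.

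The hard part will be the NSR matching, i.e. genuinely decoupling the ramified component from the whole algebra. Two things are needed, both furnished by the decomposition lemma and the Jacob--Wadsworth machinery: that the inertial $\otimes$ NSR decomposition is index-respecting and remains so after base change along the relevant unramified and tamely ramified extensions, so that ``$L$ splits $I\otimes_k N$'' really is equivalent to ``$L$ splits $I$ and $L$ splits $N$''; and that an NSR class is rigid enough to be reconstructed up to genus from its ramification character together with its maximal subfields. Everything else is bookkeeping with residue maps and an appeal to Theorem~\ref{thm:sub_genus}.
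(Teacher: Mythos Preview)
Your proposal has a genuine gap in what you call the ``decomposition lemma'': the assertion that a finite separable $L/k$ splits $I\otimes_k N$ precisely when it splits both $I$ and $N$. This is false in general --- splitting $I\otimes_k N$ only says $[I\otimes_k L]=[N^{op}\otimes_k L]$ in $Br(L)$, not that both sides vanish --- and you invoke it without proof in both the NSR-matching and the inertial-matching steps. Relatedly, the decomposition $D'\sim I'\otimes_k N'$ is non-canonical and not in general index-respecting, so you cannot fix $I'$ and $N'$ at the outset and then argue that each lies in the correct genus; the paper in fact \emph{modifies} the decomposition of $D'$ during the proof. (Two smaller points: your claim that $\mathrm{ind}(I\otimes_k N)$ is prime to $\mathrm{char}\,\overline{k}$ is false, since NSR algebras may have degree divisible by $\mathrm{char}\,\overline{k}$; tameness of $D'$ follows directly from its sharing an inertial splitting field with $I\otimes_k N$. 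Also, coincidence of splitting fields yields only $\ker\chi_{D_1}=\ker\chi_{D_2}$, i.e.\ the characters agree up to a coprime power, not $\rho([N])=\rho([N'])$ on the nose.)

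The paper's argument avoids any ``splits-both'' lemma. For the NSR part it uses, as you suggest, the ramification character: $\ker\chi_{D_1}=\ker\chi_{D_2}$ gives $\chi_{N_2}=\chi_{N_1^{\otimes j}}$ for some $j$ coprime to $e_{N_1}$, hence $N_2\sim N_1^{\otimes j}\otimes_k C$ with $[C]\in IBr(k)$; one then \emph{absorbs} $C$ into the inertial part, setting $I_2:=I_2'\otimes_k C$, so that $D_2\sim I_2\otimes_k N_1^{\otimes j}$ with $[N_1^{\otimes j}]\in gen^s([N_1])$ by Proposition~\ref{prop:genusramified}. For the inertial part the paper does not base-change along an unramified extension (your descent from $M_0$ back to $k$ is unclear) but instead takes a totally ramified \emph{separable} maximal subfield $L\subset N_1$ --- whose existence is the content of Appendix~\ref{appendix:totram}. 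Then $L$ kills both $N_1$ and $N_1^{\otimes j}$, so $D_i\otimes_k L\sim I_i\otimes_k L$; for any separable maximal subfield $F\subset I_1$, linear disjointness of the unramified $F$ and the totally ramified $L$ makes $FL$ separable, and $FL$ splits $D_1$, hence $D_2$, hence $(I_2\otimes_k F)\otimes_F FL$. Since $FL/F$ is totally ramified and $I_2\otimes_k F$ is inertial, Lemma~\ref{lem:totram} forces $F$ to split $I_2$; by symmetry and a degree count, $[I_2]\in gen^s([I_1])$. The two ingredients you are missing are this absorption trick and the totally ramified separable maximal subfield of $N$, which together are what actually decouple the two components.
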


\begin{cor}[{Corollary \ref{cor:splgenus}}]
Let  $[D] \in SBr(k)$. Write $D \sim I \otimes_k N$  where  $I$ is inertial and $N$ is NSR. Then any element in $gen^s_{spl}([D])$ is of the form $[I' \otimes_k N^{\otimes j}]$ for some  $(j,e_D) =1$ where $[I'] \in gen^s([I])$. The algebra $I'$ is the (unique) inertial lift of some division algebra  whose class lies in $gen^s([\overline{I}])$. In particular
\begin{align*}
|gen^s_{spl}([D])| \leq |gen^s([\overline{I}])|\cdot \phi(e_D)
\end{align*}
where $\phi$ denotes the Euler's Totient function. If moreover $\overline{k}$ is perfect, then  $I'$ above is the (unique) inertial lift of some  division algebra whose class lies in $gen^s_{spl}([\overline{I}])$ and
\begin{align*}
|gen^s_{spl}([D])| \leq |gen^s_{spl}([\overline{I}])|\cdot \phi(e_D)
\end{align*}
\end{cor}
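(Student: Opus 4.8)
The plan is to bootstrap the corollary from the genus--decomposition theorem (Theorem~\ref{thm:genus_cdvf}) together with the structure theory of tame valued division algebras (Jacob--Wadsworth). Since $[D]\in SBr(k)$, that theory provides a decomposition $D\sim I\otimes_k N$ with $I$ inertial and $N$ nicely semiramified, and with $e_D=e_N$. Theorem~\ref{thm:genus_cdvf} then says every $[D']\in gen^s_{spl}([D])$ has the form $[I'\otimes_k N']$ with $[I']\in gen^s([I])$ and $[N']\in gen^s([N])$, so the whole problem reduces to describing $gen^s([I])$ and $gen^s([N])$ explicitly and counting.

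For the inertial factor: an inertial division algebra has an unramified maximal subfield $L$ (the inertial lift of a separable maximal subfield of $\overline I$), which is separable over $k$; hence $L$ embeds in any $I'\in gen^s([I])$ as a maximal subfield, which also forces $\deg I'=\deg I$ and shows that $L$ splits $I'$. As $L$ lies in the maximal unramified extension $k_{nr}$, this forces $[I']$ to die over $k_{nr}$, hence to be inertial; comparing residues of the common inertial maximal subfields of $I$ and $I'$ then gives $[\overline{I'}]\in gen^s([\overline I])$. Thus the residue (inertial--lift) correspondence $IBr(k)\cong Br(\overline k)$ identifies $gen^s([I])$ with a subset of $gen^s([\overline I])$, so $I'$ is the unique inertial lift of a class in $gen^s([\overline I])$ and $|gen^s([I])|\le|gen^s([\overline I])|$. (Alternatively one may decompose $I$ into primary components and invoke the unramified case of Theorem~\ref{thm:sub_genus} on each.)

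For the nicely semiramified factor: $N$ carries both an unramified and a totally ramified maximal subfield, both separable over $k$, so any $N'\in gen^s([N])$ must contain both; by the characterization of nicely semiramified algebras (a tame division algebra with both an unramified and a totally ramified maximal subfield is nicely semiramified) $N'$ is again nicely semiramified, with $\overline{N'}\cong\overline N$ and $\Gamma_{N'}=\Gamma_N$. Matching this data pins $[N']$ down to a coprime power $[N^{\otimes j}]$, $(j,e_N)=1$: in the cyclic case such algebras are exactly the $N^{\otimes j}$ with $(j,e_N)=1$, and in general one uses that $N'$ shares \emph{all} separable maximal subfields of $N$. Hence $gen^s([N])\subseteq\{[N^{\otimes j}]\mid (j,e_N)=1\}$ and $|gen^s([N])|\le\phi(e_N)=\phi(e_D)$. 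Combining the two factors, each $[D']\in gen^s_{spl}([D])$ equals $[I'\otimes_k N^{\otimes j}]$ with $(j,e_D)=1$ and $[I']$ an inertial lift of a class in $gen^s([\overline I])$, and $|gen^s_{spl}([D])|\le|gen^s([\overline I])|\cdot\phi(e_D)$ follows. For the refinement when $\overline k$ is perfect, one repeats the argument with the $gen^s_{spl}$-versions of Theorem~\ref{thm:sub_genus} and of the inertial step (which hold with equality over a perfect residue field), tracking separable splitting fields rather than maximal subfields through $D\sim I\otimes_k N$ so as to land $[I']$ in $gen^s_{spl}([I])$, and using that over a perfect field $gen^s_{spl}([\overline I])\subseteq gen^s([\overline I])$ because separable maximal subfields are precisely the separable splitting fields of minimal degree.

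I expect the main obstacle to be the nicely semiramified step: upgrading the prime-index information of Theorem~\ref{thm:sub_genus} to composite ramification index, and verifying that ``sharing all separable maximal subfields'' really forces $N'$ to be a coprime power of $N$ rather than merely a nicely semiramified algebra with the correct residue field and value group (the latter need not form a cyclic family when $\Gamma_N/\Gamma_k$ is non-cyclic, so one genuinely needs more than a single pair of common maximal subfields). A secondary technical point is making the inertial step robust in the imperfect-residue case, i.e.\ ensuring $I'$ is inertial (not merely inertially split) and that no separability is lost on passage to residues; routing through the primary-component reduction to Theorem~\ref{thm:sub_genus} side-steps most of this.
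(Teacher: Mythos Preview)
Your approach is essentially the paper's: the corollary is obtained by combining Theorem~\ref{thm:genus_cdvf} (and Theorem~\ref{thm:genus_cdvf2} in the perfect case) with Proposition~\ref{prop:inertial_residue} for the inertial factor and Proposition~\ref{prop:genusramified} for the NSR factor; the two paragraphs you wrote for $I$ and $N$ are precisely sketches of those two already-proved propositions.

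Your anticipated obstacle in the NSR step is not real. Proposition~\ref{prop:genusramified} already gives $gen^s([N])=\{[N^{\otimes j}]\mid (j,e_N)=1\}$ for any NSR $N$, and its proof is cleaner than the route you outline: for $[M]\in gen^s([N])$ one has $[M\otimes N^{(\otimes i)\,op}]$ unramified for some $(i,e_N)=1$ by comparing residue characters, and this class is split by the totally ramified separable maximal subfield of $N$ (which $M$ shares), hence trivial by Lemma~\ref{lem:totram}. Also, your concern about $\Gamma_N/\Gamma_k$ possibly being non-cyclic is misplaced in this paper's setting: all valuations are discrete, so $\Gamma_N/\Gamma_k$ is always cyclic.
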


Now let $K$ is an arbitrary field of $char \neq 2$ with a set of discrete valuations $V$.  We denote the completion of $K$  with respect to $v$ and its residue field respectively by $K_v$ and $\overline{K_v}$. Recall from \S\ref{sec:intro} that ${}_2\Sha^{Br}(K, V)$ denotes the $2$-torsion of the  kernel of the local-global map on the Brauer group with respect to $V$.

For a division algebra $D$ over $K$, we say that $D$ (or its class  in $Br(K)$) is tame  with respect to $V$ if $D_v:=D \otimes_K K_v$ is tame for every $v \in V$.  We denote that set of tame elements  of $Br(K)$ with respect to $V$ by $SBr(K,V)$.  
\begin{thm}(Theorem \ref{thm:main})
Let $[Q]\in SBr(K,V)$ be the class of a quaternion division algebra over $K$. Suppose for every $v\in V$, the genus of  any  quaternion division algebra  over $\overline{K_v}$ is trivial. Then
\begin{align*}
gen^s([Q]) \subseteq [Q]+ {}_2\Sha^{Br}(K, V)
\end{align*}
In particular,
\begin{align*}
    |gen^s([Q])| \leq |{}_2\Sha^{Br}(K, V)|
\end{align*}
\end{thm}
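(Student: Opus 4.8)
The plan is to reduce the global statement to the local structure theorems (Theorem~\ref{thm:sub_genus} and its corollary) via the fact that maximal subfields of a quaternion algebra can be detected completion-by-completion. Concretely, suppose $[Q'] \in gen^s([Q])$, so $Q'$ is a quaternion division algebra over $K$ (being in the genus of a quaternion, $Q'$ has degree $2$) sharing all separable maximal subfields with $Q$. First I would show that for each $v \in V$, the completions $Q'_v$ and $Q_v$ are \emph{locally isomorphic}, i.e. $[Q'_v] \in gen^s([Q_v])$ in $Br(K_v)$. This uses the standard fact that a separable quadratic extension $L/K_v$ embeds in $Q_v$ iff $L$ splits $Q_v$, together with an approximation argument: a separable quadratic \'etale subalgebra of $Q_v$ can be approximated by one coming from a global quadratic extension of $K$, which embeds in $Q$ (since $Q$, $Q'$ share maximal subfields) hence in $Q'$, hence in $Q'_v$. (Here one must be slightly careful at the split completions, where "quaternion division algebra" fails and one argues directly that $Q_v \cong Q'_v \cong M_2(K_v)$; and one should remark that it suffices to treat separable quadratic \emph{field} extensions, the split \'etale algebra $K_v \times K_v$ being automatic.)

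Next, since $[Q]$ is tame with respect to $V$, so is $[Q']$: indeed tameness of $Q'_v$ follows because $Q'_v$ shares a separable maximal subfield with $Q_v$, hence is split by an unramified or tamely ramified extension (or one invokes that $Q'_v \in gen^s(Q_v)$ and $char~\overline{K_v} \neq 2$, so every quaternion over $K_v$ is tame anyway when the residue characteristic is odd — in the residue characteristic $2$ case one uses the explicit local description below). Now I apply the local classification. For each $v$, Theorem~\ref{thm:sub_genus} with $p=2$ gives $gen^s([Q_v]) \subseteq \{[C] : \overline{C} \in gen^s(\overline{Q_v})\}$ when $Q_v$ is unramified, and $gen^s([Q_v]) = \{[Q_v]\}$ when $Q_v$ is ramified (since $p-1 = 1$). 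By hypothesis the genus of every quaternion over $\overline{K_v}$ is trivial, so $gen^s(\overline{Q_v})$ is a singleton and in the unramified case the only unramified lift is $Q_v$ itself; in either case $gen^s([Q_v]) = \{[Q_v]\}$ for every $v \in V$. Combined with the previous paragraph, $[Q'_v] = [Q_v]$ in $Br(K_v)$ for all $v \in V$.

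Finally, set $[Q''] := [Q'] - [Q] = [Q' \otimes_K Q] \in {}_2Br(K)$ (the class is $2$-torsion since both $Q, Q'$ are quaternions). The previous step shows $[Q''_v] = [Q'_v] - [Q_v] = 0$ in $Br(K_v)$ for every $v \in V$, i.e. $[Q''] \in ker\big({}_2Br(K) \to \prod_{v\in V} {}_2Br(K_v)\big) = {}_2\Sha^{Br}(K,V)$. Hence $[Q'] = [Q] + [Q''] \in [Q] + {}_2\Sha^{Br}(K,V)$, giving the inclusion $gen^s([Q]) \subseteq [Q] + {}_2\Sha^{Br}(K,V)$ and therefore $|gen^s([Q])| \leq |{}_2\Sha^{Br}(K,V)|$. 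The main obstacle I anticipate is the first step — rigorously passing from "same global separable maximal subfields" to "locally in the genus at each $v$", since this requires an approximation/lifting argument to realize local quadratic splitting fields by global ones (and a careful treatment of the finitely vs. infinitely many split places and of residue characteristic $2$); the local-to-global bookkeeping in the last two paragraphs is then essentially formal given Theorem~\ref{thm:sub_genus}.
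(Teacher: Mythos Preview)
Your proposal is correct and follows exactly the paper's three-step strategy: pass from the global genus to $[Q'_v]\in gen^s([Q_v])$ at each $v$, apply the local triviality result (the paper invokes Corollary~\ref{cor:sub_genus_up} directly rather than rederiving it from Theorem~\ref{thm:sub_genus}), and conclude $[Q'\otimes_K Q]\in {}_2\Sha^{Br}(K,V)$. The step you flag as the main obstacle is precisely what \cite[Corollary~2.4]{RR_genus} provides off the shelf, so the paper dispatches it by citation and no approximation argument is needed; your tameness discussion for $Q'$ is also superfluous, since once $gen^s([Q_v])$ is trivial (using only tameness of $Q_v$) one gets $[Q'_v]=[Q_v]$ immediately.
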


\begin{cor} [{Corollary \ref{cor:examples})}]
If $K$ is one of the following fields, the genus of any quaternion division algebra over $K$ is trivial
\begin{enumerate}
    \item  Higher local fields where the final residue field has characteristic $\neq 2$
    \item Iterated Laurent series $k(\!(t_1)\!)(\!(t_2)\!)\cdots(\!(t_n)\!)$ (resp. their finite extensions) where $char~k \neq 2$ and every quaternion division algebra over $k$ (resp. every finite extension of $k$) has trivial genus
    \item Function fields of  one variable over fields in (1)
    \item Function fields of one variable any real closed field 
    \item Function fields of  one variable  over fields in (2) where  for any curve $C$ over $k$ (where $k$ is as in (2)),  every quaternion algebra over every finite extension of $k$ and over $k(C)$ has trivial genus (for example, $k(\!(t_1)\!)(\!(t_2)\!)\cdots(\!(t_n)\!)(C)$  where one can take $k$ to be  any  real closed field by  (4))
\end{enumerate}
\end{cor}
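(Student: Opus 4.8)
The plan is to treat the five families separately, splitting them into the two ``complete discretely valued'' families (1) and (2), which I would handle by iterating Corollary~\ref{cor:sub_genus_up}, and the three ``function field of a curve'' families (3)--(5), which I would handle by Theorem~\ref{thm:main} applied to the set $V$ of all divisorial discrete valuations. Throughout, the role of the characteristic $\neq 2$ hypotheses is that a quaternion algebra has index dividing $2$, so as soon as the relevant residue characteristic is $\neq 2$ it is automatically tame; this is exactly what makes Corollary~\ref{cor:sub_genus_up} and Theorem~\ref{thm:main} applicable.

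For (1), I would write a higher local field $K$ of dimension $d$ as the top of a tower $K = K_d \supset K_{d-1} \supset \cdots \supset K_0$, where each $K_i$ is complete for a discrete valuation with residue field $K_{i-1}$ and $K_0$ is the final residue field, a finite field of characteristic $p \neq 2$. Since $K_0$ has characteristic $\neq 2$, so does every residue field in the tower, whence every quaternion over every $K_i$ is tame; over $K_0$ the Brauer group vanishes, so the genus of a quaternion over $K_0$ is trivial vacuously, and one then climbs the tower applying Corollary~\ref{cor:sub_genus_up} at each of the $d$ steps. Item (2) is the same argument with $K_0 = k$ (or a finite extension of $k$): the field $k(\!(t_1)\!)\cdots(\!(t_i)\!)$ is complete for the $t_i$-adic valuation with residue $k(\!(t_1)\!)\cdots(\!(t_{i-1})\!)$, again of characteristic $\neq 2$, so the intermediate algebras are tame and Corollary~\ref{cor:sub_genus_up} transports triviality of the genus up the tower from the hypothesis over $k$. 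The parenthetical assumption about finite extensions of $k$ is precisely what makes the induction self-contained, since a finite extension $L$ of $k(\!(t_1)\!)\cdots(\!(t_n)\!)$ is complete for the valuation extending the $t_n$-adic one and has residue a finite extension of $k(\!(t_1)\!)\cdots(\!(t_{n-1})\!)$, into which the induction must descend.

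For (3)--(5), I would set $K = F(C)$, the function field of a curve over a field $F$ from the relevant family, and take $V$ to be the set of all divisorial discrete valuations of $K$: those coming from codimension-one points of regular proper models of $C$ over the valuation ring of $F$ when $F$ is complete discretely valued (such models exist), and all discrete valuations trivial on $F$ when $F$ is real closed. Then for each $v \in V$ the residue field $\overline{K_v}$ is either a finite extension of $F$ or the function field of a curve over the residue field of $F$ (and, when $F$ is real closed, just $F$ or $F(\sqrt{-1})$). In every case $\overline{K_v}$ again lies in one of the families (1)--(5), but of strictly smaller ``level'' --- the dimension of the higher local field, or the number of Laurent variables, decreases by one --- unless it is already covered by the already-settled items (1), (2), (4) or is a base case. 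So, by induction on this level, the genus of every quaternion over every $\overline{K_v}$ is trivial; moreover every such residue characteristic is $\neq 2$, so $[Q]$ is tame with respect to $V$; and Theorem~\ref{thm:main} gives $gen^s([Q]) \subseteq [Q] + {}_2\Sha^{Br}(K,V)$. The base cases are: $\overline{K_v}$ a global field or a finite field, where triviality of the genus for quaternions is classical (\cite[\S 3.6]{rapinchuk_genus_unramified}) or vacuous, for (3); $\overline{K_v}$ a real closed field or its algebraic closure, where the only quaternion division algebra over a real closed field is $(-1,-1)$, determined by its unique quadratic maximal subfield, and there is none over an algebraically closed field, for (4); and the fields $k$, its finite extensions, and $k(C')$, taken directly from the hypothesis, for (5).

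The remaining --- and substantive --- step is the vanishing of ${}_2\Sha^{Br}(K,V)$ in each of (3)--(5), that is, a local-global principle asserting that a $2$-torsion Brauer class of the function field of a curve over one of these base fields which becomes trivial in every divisorial completion is already trivial. This is the main obstacle, and the only genuinely external input: for $F$ a $p$-adic, and more generally a higher local, field it is the Harbater--Hartmann--Krashen / Parimala--Suresh local-global principle for (semi-)global fields, applied level by level down the tower defining $F$; for $F$ an iterated Laurent series field it follows from \cite[Theorem 4.2 and Theorem 4.3(ii)]{parimala_local_global} used inductively, viewing $k(\!(t_1)\!)\cdots(\!(t_n)\!)(C)$ as the function field of a curve over $k(\!(t_1)\!)\cdots(\!(t_{n-1})\!)$ and checking that the relevant property passes from a field $k'$ to $k'(\!(t)\!)$ and to $k'(\!(t)\!)(C')$; and for $F$ real closed it is the classical local-global theorem for quaternion algebras over function fields of real curves. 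Granting these, ${}_2\Sha^{Br}(K,V) = 0$, so $gen^s([Q]) = \{[Q]\}$, which is the claim. In effect the genus machinery of this paper reduces the corollary to these three local-global inputs together with routine bookkeeping --- existence of regular models, identification of residue fields, and keeping every residue characteristic $\neq 2$ so that tameness is automatic --- and it is this last point that the characteristic hypotheses are there to ensure.
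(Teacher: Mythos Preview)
Your proposal is correct and follows essentially the same strategy as the paper. For items (1) and (2) you iterate Corollary~\ref{cor:sub_genus_up} directly, whereas the paper phrases this as an application of Corollary~\ref{cor:local_global} with the singleton $V=\{v\}$; since ${}_2\Sha^{Br}(K,\{v\})=0$ trivially when $K$ is complete at $v$, the two are the same. For (3)--(5) your induction on the ``level'' together with the cited local-global inputs (Parimala--Suresh for semi-global fields, the specialization injectivity of \cite{parimala_real} for real curves) matches the paper's argument exactly; your choice in (4) of all closed points rather than just $C(R)$ is harmless, since at a complex point $Br(K_v)=0$ automatically.
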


For  an abelian variety $A$ over a global field $k$, let  $\Sha(A)$  denote the Tate-Shafarevich  group.
\begin{thm}[{\S \ref{sec:elliptic}}]
Let $C$ be a smooth projective geometrically integral curve over a global field $k$ with a rational point. Let $Q$ be a quaternion algebra  over $k(C)$ that is  tame with respect to every (dyadic) discrete valuation arising from a codimension one point of  a regular projective model of $C$.  Then 
\begin{align*}
    |gen^s([Q])| \leq |{}_2\Sha(J_C)|
\end{align*}
where $J_C$ is the Jacobian of $C$. In particular, when $C=E$ is an elliptic curve, we have
\begin{align*}
    |gen^s([Q])| \leq |{}_2\Sha(E)|
\end{align*}
In particular, we get $gen^s([Q])$ is trivial whenever ${}_2\Sha(E)$ is trivial.
\end{thm}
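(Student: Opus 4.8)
The plan is to reduce the statement about $k(C)$ to the previously established Theorem \ref{thm:main}, with $V$ the set of discrete valuations coming from codimension one points of a regular projective model $\mathcal{C}$ of $C$ over the ring of integers (or over the constant field, in the function field case) of $k$. First I would verify that the hypothesis of Theorem \ref{thm:main} is satisfied: for each $v \in V$, the residue field $\overline{K_v}$ is either a global field, a local field, a finite field, or the function field of a curve over a finite field, and in every one of these cases the genus of any quaternion algebra is trivial — for global and local fields this is the classical fact cited in the introduction (\cite[\S 3.6]{rapinchuk_genus_unramified}), and for finite fields the Brauer group kills quaternions outright. Finite extensions of such fields are again of the same type, so the residue fields at all points of $\mathcal C$ (including the non-closed ones) qualify. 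Hence Theorem \ref{thm:main} applies and gives $gen^s([Q]) \subseteq [Q] + {}_2\Sha^{Br}(k(C), V)$, so it suffices to bound $|{}_2\Sha^{Br}(k(C), V)|$ by $|{}_2\Sha(J_C)|$.

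The core of the argument is therefore to produce an injection ${}_2\Sha^{Br}(k(C), V) \hookrightarrow {}_2\Sha(J_C)$. Here I would invoke the geometric interpretation of the unramified Brauer group of a curve: since $C$ has a $k$-rational point $P_0$, there is a well-known exact sequence relating $Br(k)$, the unramified Brauer group $Br(k(C))_{V}$ (equivalently $Br(\mathcal C)$ for a regular proper model, or $Br(C)$), and $H^1(k, J_C)$ — the rational point splits the structure map $C \to \operatorname{Spec} k$ and yields a direct sum decomposition, the "transcendental" part being controlled by $\operatorname{Pic}^0 = J_C$ via the Leray/Hochschild–Serre spectral sequence for $C \to \operatorname{Spec} k$. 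Concretely, using the rational point one gets $Br(C)/Br(k) \hookrightarrow H^1(k, \operatorname{Pic}_{\bar C}) $, and the part of $\operatorname{Pic}_{\bar C}$ that matters is $J_C = \operatorname{Pic}^0_{\bar C}$. An element of ${}_2\Sha^{Br}(k(C),V)$ is, by definition, unramified at all $v \in V$, hence lies in $Br(\mathcal C)$ (this uses that $\mathcal C$ is regular and that codimension one points of $\mathcal C$ give all the relevant valuations — purity for the Brauer group of a regular scheme in the relevant dimensions, or the explicit residue-map description); and it is moreover locally trivial at every $v$, in particular at every place of $k$ (restricting to the generic fibre, or using the section through $P_0$), which forces its image in $H^1(k, J_C)$ to lie in $\Sha(J_C)$. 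Taking $2$-torsion throughout and checking that the map $[Q] \mapsto$ (its class in $H^1(k,J_C)$) is injective on ${}_2\Sha^{Br}$ — the kernel being $Br(k) \cap {}_2\Sha^{Br}$, which is forced to be zero because an element of $\Sha^{Br}$ that comes from $Br(k)$ and is locally trivial at all $v \in V$ lying over all places of $k$ must vanish by the local-global principle for $Br(k)$ (global field) — yields the desired injection and hence $|gen^s([Q])| \le |{}_2\Sha(J_C)|$.

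For the special case $C = E$, one has $J_E = E$ canonically (an elliptic curve is its own Jacobian), and the genus $0$ rational point is the origin, so the inequality reads $|gen^s([Q])| \le |{}_2\Sha(E)|$ directly; triviality of ${}_2\Sha(E)$ then forces $gen^s([Q]) = \{[Q]\}$.

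The main obstacle I anticipate is the bookkeeping in the second paragraph: making the passage from "unramified at all $v \in V$ and locally trivial at all $v$" to "lands in $\Sha(J_C)$" fully rigorous. One must be careful that $V$ — the valuations from a \emph{single} regular projective model over the $S$-integers — really captures enough places: both the "horizontal" valuations (giving the places of $k(C)$ coming from closed points of the generic fibre, whose residue fields are finite extensions of $k$) and the "vertical" ones (giving, after restriction, the places of $k$ itself). The cleanest route is probably to work with the model $\mathcal C \to \operatorname{Spec}\mathcal O_{k,S}$, identify ${}_2\Sha^{Br}(k(C),V)$ with the $2$-torsion of $\ker\bigl(Br(\mathcal C) \to \prod_{v} Br(k(C)_v)\bigr)$ using purity, and then run the Leray spectral sequence for $\mathcal C \to \operatorname{Spec}\mathcal O_{k,S}$ together with the section to land in $\Sha^1(\mathcal O_{k,S}, J_C) \subseteq \Sha(J_C)$; one must also confirm (or cite) that enlarging $S$ does not change $\Sha(J_C)$ and that the tameness hypothesis on $Q$ at dyadic places is exactly what is needed to stay inside $SBr(k(C),V)$ so that Theorem \ref{thm:main} is applicable in the first place.
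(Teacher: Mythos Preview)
Your overall strategy matches the paper's exactly: apply Theorem~\ref{thm:main} with $V=V_{\mathcal C}$, then identify ${}_2\Sha^{Br}(k(C),V_{\mathcal C})$ with ${}_2\Sha(J_C)$. The difference is in how the identification is carried out. You propose to construct the injection directly from the Hochschild--Serre spectral sequence for $C\to\operatorname{Spec} k$ together with purity for $Br(\mathcal C)$. The paper instead factors this step as
\[
\Sha^{Br}(C/\mathcal C)\;=\;\Sha^{Br}(C)\;\simeq\;\Sha(J_C),
\]
where the second isomorphism is quoted from Parimala--Sujatha and the first equality is proved independently in Appendix~A (Theorem~\ref{thm:comparison}). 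That appendix result is precisely the ``bookkeeping'' you flag as the obstacle: the passage from ``trivial in $Br(k(C)_x)$ for all $x\in\mathcal C^{(1)}$'' to ``trivial in $Br(C_v)$ for all places $v$ of $k$'' is not a formal restriction argument but uses, for non-archimedean $v$, Grothendieck's vanishing of $Br$ for a regular proper relative curve over a complete DVR with finite residue field, and a separate argument at the archimedean places via real closed fields. Your spectral-sequence route would ultimately have to invoke the same vanishing input, so the two approaches converge in substance; the paper's factorisation has the advantage that half of it (the $\Sha^{Br}(C)\simeq\Sha(J_C)$ side) is already in the literature, and it yields an actual isomorphism rather than merely an injection.

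Two minor corrections to your first paragraph: the residue fields at codimension one points of $\mathcal C$ are \emph{all} global fields (finite extensions of $k$ for horizontal divisors, function fields of curves over finite fields for vertical ones); local fields and finite fields do not occur, so your case list is broader than needed. And the tameness hypothesis is only an issue at the dyadic places, since elsewhere $2$ is invertible in the residue field and every quaternion class is automatically tame.
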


\section{Preliminaries}
\subsection{Division algebras  over complete discrete valued fields} \label{sec:cdvf}
Let  $D$ be a  finite dimensional division algebra over a complete discrete valued field $k$ with residue field $\overline{k}$. Since $k$ is complete, the valuation on $k$ extends uniquely to a valuation $v$ on $D$ (\cite[Corolary 2.2]{wadsworth}).   Associated to $v$, one can define the residue algebra (\cite[\S2]{wadsworth} 
\begin{align*}
    \overline{D} = V_D/M_D
\end{align*}
where 
\begin{align*}
V_D &= \{ a \in D^* | v (a) \geq 0\} \cup 0\\
M_D &= \{a \in D^* | v (a) > 0 \} \cup 0
\end{align*}
One can easily see that $\overline{D}$ is a division algebra over its center $Z(\overline{D})$ (which may or may not be equal to the residue field $\overline{k}$). Let $\Gamma_D$ and $\Gamma_k$ denote respectively the value group of $D$ and $k$. Note that, the valuation $v$ is discrete and we have  $\Gamma_k \subseteq \Gamma_D \subseteq \frac{1}{\sqrt{[D:k]}}\Gamma_k$ (\cite[Eqn. (2.7)]{wadsworth}). Therefore, $\Gamma_D/ \Gamma_k$ is cyclic and the number $e_{D} :=|\Gamma_D:\Gamma_k|$ is called the \textit{ramification index} of $D$. Moreover, we have the following \textit{fundamental equality} (\cite[Equation (2.10)]{wadsworth}, \cite[page 359]{morandi_defective})
\begin{align}\label{eqn:fund_equality}
    [D:k] = [\overline{D}:\overline{k}] [\Gamma_D: \Gamma_k]
\end{align}
We say that $D$ is \textit{unramified or inertial} if $Z(\overline{D}) = \overline{k}$  and $[\overline{D}:\overline{k}] = [D:k]$. An interesting fact similar to the case of fields is that, given any division algebra $\tilde{D}$ over $\overline{k}$, there is a unique unramified (also called inertial) division algebra $D$ over $k$ with the property that $\overline{D} \simeq \tilde{D}$. This is called the \textit{inertial lift} of $\tilde{D}$ over $k$ (\cite[Theorem 2.8(a)]{wadsworth_henselian}).
Note that if $D$ is unramified, the ramification index $e_D =1$ by the above equality. When $e_D$ is larger that $1$, $D$ is said to be \textit{ramified}.  One special case of ramified division algebras is that of \textit{nicely semiramified} (abbreviated as NSR) algebras. A division algebra $N$ over $k$ is said to be NSR  if it contains a maximal subfield that is unramified  as well as a  maximal subfield that  is totally ramified   $k$ (\cite[\S4]{wadsworth_henselian}, \cite[Theorem 2.4]{karim_nsr}).  In this case, $\overline{N}$ is a field and 
\begin{align*} 
    [\overline{N}:\overline{k}] = [\Gamma_N: \Gamma_k] = \sqrt{[N:k]}
\end{align*}
See \cite[\S 4]{wadsworth_henselian} for more details. 
\\
\indent We say that $D$ is \textit{ inertially split} if $D$ is split by an unramified (inertial) extension of $k$. By \cite[Lemma 6.2]{wadsworth_henselian} and \cite[Remark 3.2(a)]{tignol_wadsworth_ramified}, this is equivalent to $D$ being \textit{tame} as defined in  \cite[Definition \S6]{wadsworth_henselian}.  When $\overline{k}$ is perfect or if $char~\overline{k}$ is coprime to the degree of $D$, every $D$ over $k$ is tame (\cite[Theorem 1, Exercise 3, Chapter XII]{serre_local}).  Note that an unramified  division algebra $D$ is automatically tame since it contains the inertial lift of a separable maximal subfield  of $\overline{D}$ (\cite[Theorem 2.9]{wadsworth_henselian}).   Following \cite{wadsworth} we define
\begin{align*}
IBr(k) &= \{[D] | D \text{~is ~a~ finite ~dimensional ~ unramified~ division~ algebra~ over ~} k  \}\\
    SBr(k) &= \{[D] | D \text{~is ~a~ finite ~dimensional ~ tame~ division~ algebra~ over ~} k  \}
\end{align*}
We observe the following:
\begin{prop}\label{prop:nsr}
    Let $D$ be finite dimensional  tame division algebra over $k$ such that $\overline{D}$ is a field. Then $D$ is cyclic. In particular, any NSR division algebra over $k$ is cyclic.
\end{prop}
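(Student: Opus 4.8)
The plan is to use the Jacob--Wadsworth structure theory of tame (inertially split) division algebras to pin down that $D$ must be \emph{semiramified} with cyclic residue extension, then to exhibit the inertial lift of $\overline{D}$ inside $D$ as a maximal subfield, and finally to read off from the classical crossed-product description that $D$ is a cyclic algebra.

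First I would extract the consequences of tameness (using \cite{wadsworth}, \cite{wadsworth_henselian}). Since $D$ is inertially split and $\overline{D}$ is a field, $\overline{D}$ coincides with its own center, which is known to be an abelian Galois extension of $\overline{k}$; moreover the structure theory forces $D$ to be semiramified, so that, writing $n:=\deg D = \sqrt{[D:k]}$, we get $[\overline{D}:\overline{k}] = e_D$ and hence, by the fundamental equality \eqref{eqn:fund_equality} together with $[D:k]=n^2$, both equal $n$; and $\mathrm{Gal}(\overline{D}/\overline{k})$ is canonically isomorphic to $\Gamma_D/\Gamma_k$. Since the latter is cyclic (see \S\ref{sec:cdvf}), $\overline{D}/\overline{k}$ is cyclic Galois of degree $n$.

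Next I would produce an unramified maximal subfield of $D$ by lifting. Choose a generator $\bar a$ of the cyclic extension $\overline{D}/\overline{k}$, so $\bar a$ has degree $n$ and separable minimal polynomial over $\overline{k}$, and pick any preimage $a\in V_D$ of $\bar a$. Then $k[a]$ is a commutative finite-dimensional domain inside the division algebra $D$, hence a subfield containing $k$; its degree over $k$ divides $\deg D=n$ (every subfield of $D$ has degree over $k$ dividing $\deg D$), while it is at least $[\overline{k[a]}:\overline k]\ge[\overline{k}[\bar a]:\overline k]=n$ by the fundamental inequality. Hence $[k[a]:k]=n$, so $k[a]$ is a maximal subfield, it is unramified over $k$, and $\overline{k[a]}=\overline{D}$. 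By uniqueness of the inertial lift (\cite[Theorem 2.8]{wadsworth_henselian}), $k[a]$ is $k$-isomorphic to the inertial lift $L$ of $\overline{D}$; and since unramified extensions of $k$ are classified by their (separable) residue extensions, $k[a]/k$ is Galois with group isomorphic to $\mathrm{Gal}(\overline{D}/\overline{k})$, hence cyclic of degree $n$.

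Finally, $D$ is a central division $k$-algebra of degree $n$ containing the maximal subfield $k[a]$, which is cyclic Galois over $k$ of degree $n$; extending a generator of $\mathrm{Gal}(k[a]/k)$ to an inner automorphism of $D$ via Skolem--Noether exhibits $D$ as a cyclic algebra $(k[a]/k,\sigma,b)$ for some $b\in k^{\times}$. For the last assertion, an NSR division algebra has a field for its residue by definition and is split by its unramified maximal subfield, hence is tame, so the statement just proved applies. The step I expect to be the crux is the first paragraph: everything rests on the valuation-theoretic input that tameness, together with $\overline{D}$ being a field, forces $D$ to be semiramified with $\overline{D}/\overline{k}$ Galois. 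If one wished to avoid quoting this in the passage from ``$D$ semiramified, $\overline D/\overline k$ cyclic'' to ``$L$ embeds in $D$'', one could instead compute the residue of $D\otimes_k L$: it is the underlying division algebra of $\overline{D}\otimes_{\overline{k}}\overline{D}\cong\overline{D}^{\,n}$ (as $\overline D/\overline k$ is Galois), which is split, forcing $L$ to split $D$ and hence to embed in it.
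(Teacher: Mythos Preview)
Your proof is correct and follows essentially the same route as the paper: both first use the Jacob--Wadsworth structure theory (the paper cites \cite[Lemma 5.1]{wadsworth_henselian}) to see that $\overline{D}/\overline{k}$ is cyclic of degree $\sqrt{[D:k]}$, then embed the inertial lift of $\overline{D}$ in $D$ as a cyclic maximal subfield (the paper cites \cite[Theorem 2.9]{wadsworth_henselian}, while you reprove this special case by lifting a primitive element). The only difference is that you unpack by hand what the paper obtains by direct citation.
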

\begin{proof}
    By \cite[Lemma 5.1]{wadsworth_henselian}, $\overline{D}/\overline{k}$ is a cyclic field extension and  \begin{align*}
    [\overline{D}:\overline{k}] = [\Gamma_D: \Gamma_k] = \sqrt{[D:k]}
\end{align*} 
by (\ref{eqn:fund_equality}). Now $D$ contains the inertial lift $L$ of $\overline{D}$ by \cite[Theorem 2.9]{wadsworth_henselian}. Note that  $L$ is a maximal subfield and is  cyclic(\cite[Chapter III, \S5, Theorem 3]{serre_local})).  Therefore $D$ is cyclic.  The last statement follows from the previous statement and fact that any NSR division algebras is tame (since it contains an unramified maximal subfield) and  the   residue  algebra is a field. 
\end{proof}

\begin{thm}[{\cite[Lemma 5.14, Theorem 5.15]{wadsworth_henselian}}]
Given any   $[D] \in SBr(k)$, there is a (non-canonical) decomposition in $Br(k)$ given by
\begin{align*}
    D \sim I \otimes_k N
\end{align*}
where $I$ is unramified and $N$ is NSR . Moreover $Z(\overline{D}) = \overline{N}, \overline{D} \simeq \overline{I}\otimes_{\overline{k}}, \overline{N}, e_D = e_N$
\end{thm}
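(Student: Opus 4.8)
The plan is to prove the decomposition by means of the \emph{ramification (residue) homomorphism} on $SBr(k)$ together with an explicit section realized by cyclic NSR algebras. Throughout write $G=\mathrm{Gal}(\overline{k}^{sep}/\overline{k})$ and let $k^{ur}$ denote the maximal unramified extension of $k$, so that $\mathrm{Gal}(k^{ur}/k)\cong G$ and, since $k$ is complete, finite unramified extensions of $k$ correspond to finite separable extensions of $\overline{k}$.

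First I would set up the ramification sequence. A class lies in $SBr(k)$ precisely when it is split by a finite unramified extension, so $SBr(k)\cong H^{2}(G,(k^{ur})^{*})$ via inflation. The unramified tower does not enlarge the value group, hence the valuation gives a short exact sequence of $G$-modules $0\to U(k^{ur})\to(k^{ur})^{*}\to\mathbb{Z}\to 0$ with trivial action on $\mathbb{Z}$; filtering the units by the higher unit groups, whose successive quotients are the additive group $\overline{k}^{sep}$ (cohomologically trivial for $G$ by the normal basis theorem) with top quotient $\overline{k}^{sep\,*}$, one identifies $H^{2}(G,U(k^{ur}))\cong Br(\overline{k})$, as in the classical computation of the Brauer group of a complete discrete valued field. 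Combining this with $H^{2}(G,\mathbb{Z})\cong H^{1}(G,\mathbb{Q}/\mathbb{Z})$ in the long exact cohomology sequence yields
\begin{align*}
0\longrightarrow IBr(k)\longrightarrow SBr(k)\xrightarrow{\ \partial\ }H^{1}(G,\mathbb{Q}/\mathbb{Z})\longrightarrow 0,
\end{align*}
where $IBr(k)\cong Br(\overline{k})$, $[D]\mapsto[\overline{D}]$, is the inertial-lift correspondence recalled in \S\ref{sec:cdvf}. Next I would exhibit a section: a continuous character $\chi$ of order $e$ cuts out a cyclic extension $\overline{L}/\overline{k}$ of degree $e$ with a distinguished generator $\sigma$; letting $L/k$ be its inertial lift and $\pi$ a uniformizer of $k$, the cyclic algebra $N_{\chi}:=(L/k,\sigma,\pi)$ is a division algebra — since $L/k$ is unramified one has $v(N_{L/k}(L^{*}))=e\,\Gamma_{k}$, so $\pi$ has order exactly $e$ in $k^{*}/N_{L/k}(L^{*})$ — and it contains the unramified maximal subfield $L$ and the totally ramified maximal subfield $k(\pi^{1/e})$, hence is NSR, and one checks $\partial([N_{\chi}])=\chi$.

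Given $[D]\in SBr(k)$, put $N:=N_{\partial([D])}$; then $\partial([D]-[N])=0$, so $[D]-[N]\in IBr(k)$ by exactness, and I take $I$ to be its inertial lift. This yields $D\sim I\otimes_{k}N$ with $I$ unramified and $N$ NSR, which is the asserted decomposition. For the accompanying identities I would invoke the structure theory of tame division algebras over complete discrete valued fields: the character $\partial([D])$ classifies exactly the cyclic extension $Z(\overline{D})/\overline{k}$ together with its canonical generator (the automorphism induced by conjugating by an element of $D$ whose value generates $\Gamma_{D}/\Gamma_{k}$), and $e_{D}=[Z(\overline{D}):\overline{k}]$. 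Hence $\overline{N}=\overline{L}\cong Z(\overline{D})$, and since $\overline{N}$ is a field $e_{N}=[\overline{N}:\overline{k}]=[Z(\overline{D}):\overline{k}]=e_{D}$. Finally, base-changing to $L$ — which is unramified and over which $\partial([D])$ vanishes — gives $[D_{L}]\in IBr(L)$ with $[D_{L}]=[I_{L}]=[\overline{I}\otimes_{\overline{k}}\overline{L}]$ in $Br(\overline{L})$; together with compatibility of residues under unramified base change this shows $\overline{D}$ is Brauer-equivalent to $\overline{I}\otimes_{\overline{k}}\overline{N}$.

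The main obstacle lies in the structure-theoretic input used in the last step — namely that the ramification character $\partial([D])$ classifies $Z(\overline{D})/\overline{k}$ and that $e_{D}=[Z(\overline{D}):\overline{k}]$ (equivalently, that the canonical homomorphism $\Gamma_{D}/\Gamma_{k}\to\mathrm{Gal}(Z(\overline{D})/\overline{k})$ is an isomorphism in the discrete case). Establishing this from scratch is the technical core of the Jacob--Wadsworth analysis of inertially split algebras: one studies the valued division algebra $\overline{D}$ over $Z(\overline{D})$ and matches total dimensions against the fundamental equality $[D:k]=[\overline{D}:\overline{k}]\,e_{D}$. By contrast the cyclic-algebra construction dispatches the ``ramified part'' $N$ cleanly, so the remaining effort is to certify that the inertial complement $I$ has the predicted residue. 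One should also note that $\overline{D}$ and $\overline{I}\otimes_{\overline{k}}\overline{N}$ need only coincide up to Brauer equivalence — which is all that the applications require — since $\overline{I}\otimes_{\overline{k}}\overline{N}$ need not itself be a division algebra.
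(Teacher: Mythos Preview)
The paper does not actually prove this theorem: it is quoted verbatim from Jacob--Wadsworth (\cite[Lemma 5.14, Theorem 5.15]{wadsworth_henselian}) and no argument is supplied. So there is no ``paper's own proof'' to compare against; your sketch is an attempt to reconstruct the cited result.

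Your approach --- realizing $SBr(k)$ cohomologically via the unit--value-group sequence for $k^{ur}/k$, reading off the exact sequence $0\to IBr(k)\to SBr(k)\to H^{1}(G,\mathbb{Q}/\mathbb{Z})\to 0$, and splitting it by the explicit NSR cyclic algebras $(L/k,\sigma,\pi)$ --- is exactly the classical route and is in the spirit of the Jacob--Wadsworth argument (their treatment is for general Henselian valuations, but in the discrete rank-one case it collapses to what you wrote). The identification of $\partial([D])$ with the character cutting out $Z(\overline{D})$ and the equality $e_D=[Z(\overline{D}):\overline{k}]$ are indeed the structural inputs from \cite[Theorem 5.6]{wadsworth_henselian}, as you flag.

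One point to tighten: you end with only a Brauer equivalence $[\overline{D}]=[\overline{I}\otimes_{\overline{k}}\overline{N}]$ in $Br(\overline{N})$, whereas the statement asserts an \emph{isomorphism} $\overline{D}\simeq\overline{I}\otimes_{\overline{k}}\overline{N}$. To upgrade, compare dimensions: the fundamental equality gives $[\overline{D}:\overline{N}]=[D:k]/e_D^{2}$, and on the other side $[\overline{I}\otimes_{\overline{k}}\overline{N}:\overline{N}]=[\overline{I}:\overline{k}]=[I:k]$. That these agree --- equivalently, that $\overline{I}\otimes_{\overline{k}}\overline{N}$ is already a division algebra --- is not automatic from your construction (you took $I$ to be the underlying division algebra of the class $[D]-[N]$, which a priori controls only the index, not the degree). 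Jacob--Wadsworth obtain this by a more careful choice of $I$ inside $D$ itself (as an inertial subalgebra lifting $\overline{D}$ over $Z(\overline{D})$), which forces the dimension match; alternatively one can argue via Morandi's ``valued tensor product'' criterion. For the applications in the present paper only the Brauer-class statement is ever used, so your version suffices there, but the isomorphism as stated needs this extra step.
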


\indent Since non-tame division algebras pose certain difficulties that we are not able to handle at the moment, we will consider only  tame algebras in this paper.

\subsection{The unramified Brauer group}\label{sec:unramified}
Let $K$ be a field equipped with a  discrete valuation $v$. Let $K_v$, $\mathcal{O}_v$ and $\overline{K_v}$ denote respectively the completion of $K$ at $v$, its valuation ring  and the residue field.  For every $n$ such that $n$ is coprime to  $char~\overline{K_v} $ or when $\overline{K_v}$ is perfect,  one can define the \textit{residue homomorphism} (\cite[Chapter 10]{saltman_division})
\begin{align*}
    \rho_v^n: {}_nBr(K_v) &\rightarrow H^1(\overline{K_v}, \mathbb{Z}/n\mathbb{Z}) \\
    [D] &\rightarrow \chi_D
    \end{align*}
We recall that  the image of $\chi_D$ is $\Gamma_D/\Gamma_{K_v}$ (\cite[Theorem 5.6 (b)]{wadsworth_henselian}) and the fixed field of $ker(\chi_D)$ is the center of $\overline{D}$ (\cite[Theorem 3.5]{wadsworth}).  The kernel of  $\rho_v^n$ can be identified with ${}_nBr(\mathcal{O}_v)$ and is  the $n$-torsion component of the \textit{unramified Brauer group of $K$ at $v$}.  A non-trivial class $[D] \in Br(K_v)$ is unramified at $v$ if and only if $D$ is unramified  in the sense of \S \ref{sec:cdvf} (\cite[Equation (3.9)]{wadsworth}). Therefore we have ${}_nIBr(K_v) = ker (\rho_n^v)$ and there is isomorphism(\cite[Equation (3.7)]{wadsworth})

\begin{align}\label{eqn:unramified_residue}
IBr(K_v) &\simeq Br(\overline{K_v})\\
D &\mapsto \overline{D} \nonumber
\end{align}

Now let $K$ be equipped with a set of discrete valuations $V$.  Assuming $char~\overline{K_v}$ is coprime to $n$ or $\overline{K_v}$ is perfect for every $v \in V$, we have the residue map relative to $V$ given by
\begin{align*}
    \rho_V^n: {}_nBr(K) \rightarrow \prod_{v\in V} {}_nBr(K_v) \rightarrow \prod_{v \in V} H^1(\overline{K_v}, \mathbb{Z}/n\mathbb{Z}) 
\end{align*}
where the first map is the local-global map on the  Brauer group with respect to $V$ whose kernel is denoted by ${}_n\Sha^{Br}(K,V)$. We say that the \textit{local-global principle holds on ${}_nBr(K)$ with respect to $V$} if  ${}_n\Sha^{Br}(K,V)$ is trivial.  The $n$-torsion of the \textit{unramified Brauer group of $K$ with respect to $V$} is defined to be
\begin{align*}
   {}_nBr(K)_V := Ker~\rho_V^n
\end{align*}
We note that 
\begin{align}\label{eqn:sha_unramified}
  {}_n\Sha^{Br}(K,V) \subseteq {}_nBr(K)_V
\end{align}

Let $X$ be a regular integral noetherian scheme  with function field $k(X)$ and let $V$ be  the valuations corresponding to the codimension 1 points  $X^{(1)}$  of $X$. Let $k(x)$ denote the residue field at $x\in X^{(1)}$.   Then we have an exact sequence  (\cite[Theorem 6.8.3]{poonen_rational})
\begin{align*}
    0 \rightarrow Br(X) \rightarrow Br(k(X)) \xrightarrow{\rho_V} \bigoplus_{x \in X^{(1)}} H^1( k(x), \mathbb{Q}/\mathbb{Z})
\end{align*}
where we exclude the $p$-primary components in all of the above groups if $k(x)$ is imperfect of characteristic $p$ for some $x$ (\cite[Theorem 6.8.3]{poonen_rational}, \cite[Remark 6.4]{purity_brauer}).

\subsection{Higher local fields}\label{sec:higher_local}
We will quickly state  a few facts  on higher local fields that we use in this paper. We will use the notations from (\cite[Definition 2.1]{morrow_higher_local}).\\
\indent Given a field $F$,  the \textit{complete discrete valuation dimension }  of $F$, denoted $cdv.dim(F)$ is defined to be 
\begin{align*}
    cdv.dim(F) := \begin{cases}
    0 \text{~~if $F$ is not compete discrete valued field}\\
    cdv.dim(\overline{F}) + 1 \text{~~else}
    \end{cases}
\end{align*}
where $\overline{F}$ is the residue field. \\
\indent Set $F^{(0)} := F$. For every $i$, if $F^{(i)}$ is a  complete discrete valued field, one can define $F^{(i+1)}$ to be the residue field of $F^{(i)}$. A field $F$ is said to be an \textit{$n$-dimensional local field} for some
$n \geq 0$ if  $
(F) = n$ and the final residue field $F^{(n)}$ is finite.\\
\indent By \cite[Proposition 2.15]{morrow_higher_local}, every finite extension of an $n$-dimensional local field is an $n$-dimensional local field.  The classification of $n$-dimensional local fields can be found in \cite[\S 1.1]{higher_local}, \cite[\S 2.1]{morrow_higher_local} (although we will not be needing it in this paper).

\subsection{An important lemma} We will repeatedly use a lemma at many places in genus computations. For a division algebra $D$ over $K$ equipped  with a discrete valuation $v$, let $D_v$ denote $D \otimes_K K_v$ and  $\overline{D_v}$ denote the residue algebra of the underlying division algebra of  $D_v$.

\begin{lemma}\label{lem:imp}
Let $D$ be a division algebra over a field $K$ with discrete valuation $v$. Assume that $D$ is tame with respect to  $v$. If  $[D'] \in gen^s([D])$, then $[\overline{D'_v}] \in gen^s([\overline{D_v}])$.
\end{lemma}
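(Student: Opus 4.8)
The plan is to translate the assertion about residue algebras over $K_v$ into an assertion about separable maximal subfields of $D$ over $K$, where the hypothesis $[D']\in gen^s([D])$ applies directly, and then translate back. First, every central division algebra contains a separable maximal subfield, so $D$ has one, say $L$; since $L$ is then also a separable maximal subfield of $D'$ we get $\deg D=[L:K]=\deg D'=:n$. Hence it suffices to show that $\overline{D_v}$ and $\overline{D'_v}$ have the same separable maximal subfields and the same center (so that the statement $[\overline{D'_v}]\in gen^s([\overline{D_v}])$ even makes sense). The key tool is a $v$-adic approximation principle: if $\mathcal L/K_v$ is a separable field extension of degree $n$ that splits $D_v$, then $\mathcal L$ embeds into $D_v=D\otimes_K K_v$, and since $D$ is $v$-adically dense in $D_v$ while a generic element of $D$ generates a separable maximal subfield, Krasner's lemma produces a separable maximal subfield $L$ of $D$ over $K$ with $L\otimes_K K_v\cong\mathcal L$. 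Applying the genus hypothesis to $L$ (which is then a maximal subfield of $D'$, hence splits $D'$) and running the same argument with the roles of $D$ and $D'$ exchanged yields: for every separable field extension $\mathcal L/K_v$ of degree $n$, $\mathcal L$ splits $D_v$ if and only if $\mathcal L$ splits $D'_v$. In particular, feeding in an unramified extension of $K_v$ of degree $n$ that splits $D_v$ (extracted from tameness of $D_v$) shows $D'_v$ is split by an unramified extension, so $D'$ is tame at $v$ as well.

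Next I would set up, over the complete field $K_v$, a residue correspondence for tame algebras. Write $D_v\sim I\otimes_{K_v}N$ with $I$ inertial and $N$ NSR, so $Z(\overline{D_v})=\overline N$ and $\overline{D_v}=\overline I\otimes_{\overline{K_v}}\overline N$. Given a separable maximal subfield $\overline M$ of $\overline{D_v}$, its inertial lift $M/K_v$ is unramified of degree $n$; it splits $I$ because $\overline M$ splits $\overline I\otimes_{\overline{K_v}}\overline N=\overline{D_v}$, and it splits $N$ because $M$ contains the inertial lift of $\overline N$, which is the unramified maximal subfield of the NSR algebra $N$. Thus $M$ is an unramified separable maximal subfield of $D_v$ with residue $\overline M$. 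Conversely, the residue of any unramified separable maximal subfield of $D_v$ is a separable maximal subfield of $\overline{D_v}$, by a degree count using the fundamental equality (\ref{eqn:fund_equality}) and Proposition \ref{prop:nsr}; all of this applies verbatim to $D'_v$, which we now know is tame. Since, by the previous paragraph restricted to unramified extensions, $D_v$ and $D'_v$ have exactly the same unramified separable maximal subfields, their residues produce the same family of separable maximal subfields of $\overline{D_v}$ and of $\overline{D'_v}$; as $\overline{K_v}$ is infinite this common family has intersection the common center $Z(\overline{D_v})=Z(\overline{D'_v})$, and $[\overline{D'_v}]\in gen^s([\overline{D_v}])$ follows.

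The step I expect to be the crux is the residue correspondence of the second paragraph: lifting each separable maximal subfield of $\overline{D_v}$ to an unramified maximal subfield of $D_v$, which needs careful bookkeeping of the NSR part $N$ in the decomposition $D_v\sim I\otimes_{K_v}N$ — in particular the fact that such a lift $M$ automatically absorbs $N$. A secondary complication is the case where $D_v$ is not a division algebra: when $D_v$ is split one first checks, by approximating a totally split degree-$n$ \'etale extension inside $D$, that $D'_v$ is split too, so both residues are trivial; the remaining range $1<\mathrm{ind}(D_v)<n$ requires reconciling the degrees of splitting fields of $D_v$ with those of the residue of its underlying division algebra. (For the quaternion case driving the applications, $D_v$ is division or split, so these issues do not arise.)
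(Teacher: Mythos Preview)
Your strategy is sound in outline but takes a considerably longer route than the paper, and it has one genuine slip.

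The paper's proof is a few lines: it invokes \cite[Corollary 2.4]{RR_genus} to conclude directly that the underlying division algebras of $D_v$ and $D'_v$ share the same separable maximal subfields (this absorbs both your Krasner approximation paragraph \emph{and} the reduction to the division case that you leave open at the end), then cites \cite[Lemma 2.3, Remark 2.6]{rapinchuk_genus_unramified} for $Z(\overline{D_v})\simeq Z(\overline{D'_v})$, and finally uses \cite[Theorem 2.9]{wadsworth_henselian} --- the inertial lifting theorem for subfields of valued division algebras --- to lift a separable maximal subfield $\tilde L\subset \overline{D_v}$ directly to a separable maximal subfield $L\subset D_v$, with no need for the $I\otimes_k N$ decomposition. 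Your decomposition argument does reach the same conclusion (and your claim that the lift $M$ absorbs $N$ because it contains the inertial lift of $\overline N$ is correct), but Theorem~2.9 makes this one citation rather than a paragraph of bookkeeping.

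The slip is your justification for $Z(\overline{D_v})=Z(\overline{D'_v})$: you assert ``as $\overline{K_v}$ is infinite'', but nothing in the hypotheses forces this --- for $K$ a global field and $v$ a finite place, $\overline{K_v}$ is finite, and these are precisely the residue fields that appear in the paper's main applications. (Over a finite residue field the residue division algebras are fields, so the statement still holds, but your argument as written does not cover it.) The paper instead gets equality of centers from the fact that $\chi_{D_v}$ and $\chi_{D'_v}$ have the same kernel, via \cite[Lemma 2.3]{rapinchuk_genus_unramified}. Your acknowledged gap in the range $1<\operatorname{ind}(D_v)<n$ is likewise dissolved by the citation of \cite[Corollary 2.4]{RR_genus}, which already works at the level of the underlying division algebras of the completions.
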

\begin{proof}
By \cite[Corollary 2.4]{RR_genus}, the underlying division algebras of $D_v$ and $D'_v$ have the same maximal separable subfields. We may assume that $D_v$ (and hence $D'_v$) are division algebras.  Since $D_v$ (and hence $D_v'$) is tame, the center $Z(\overline{D_v})$ is separable over $\overline{K_v}$ (\cite[Theorem 3.4(iii)]{wadsworth}). Moreover, by \cite[Lemma 2.3 and Remark 2.6]{rapinchuk_genus_unramified},  $Z(\overline{D_v}) \simeq Z(\overline{D'_v})$  over $\overline{K_v}$.  Let $\tilde{L} \subset \overline{D_v}$ be a   separable maximal subfield over $Z(\overline{D_v})$ (hence separable over $\overline{K_v}$) and let $L$ be the inertial lift of $\tilde{L}$ over $K_v$. Then by \cite[Theorem 2.9]{wadsworth_henselian}, we conclude that $L \subset D_v$ is a     separable subfield. By the fundamental equality (\ref{eqn:fund_equality}) and \cite[Lemma 5.1(iii)]{wadsworth_henselian}, $L$ is a separable maximal subfield  of $D_v$. By  hypothesis, $L$ is also a  separable maximal subfield of $D'_v$. Hence, $\tilde{L} \subset \overline{D'_v}$ is a    separable maximal subfield. Since the above argument   is symmetric with respect to $D_v$ and $D'_v$, we conclude that $ [\overline{D'_v}] \in gen^s([\overline{D_v}])$.
\end{proof}

\section{Genus over purely transcendental extensions}\label{sec:purely_trans}
Let $k$ be a field with $n$ invertible. Recall the Faddeev's split exact sequence (\cite[Corollary 6.9.3]{gille_sam}):
\begin{align}\label{eqn:faddeev}
0 \rightarrow {}_nBr(k) \rightarrow {}_n Br(k(x)) \xrightarrow{\rho:= \oplus \rho_P} \oplus_{P \in \mathbb{P}_0^1 -\infty} H^1(k(P), \mathbb{Z}/n\mathbb{Z}) \rightarrow 0
\end{align}
where the third arrow coincides with the residue map in Galois cohomology with respect to the discrete valuation corresponding to the points $P \in \mathbb{P}_0^1 -\infty$.  We say that an element in $Br(k(x))$ is ramified or unramified at $P$, if it is so for the discrete valuation corresponding to $P$. 
\\
\indent The  exact sequence (\ref{eqn:faddeev}) splits. Let 
\begin{align*}
\theta: \oplus_{P \in \mathbb{P}_0^1 -\infty} H^1(k(P), \mathbb{Z}/n\mathbb{Z}) \rightarrow {}_nBr(k(x))
\end{align*}
be a splitting.  So any class $[D] \in {}_nBr(k)$ can be written as 
\begin{align}\label{eqn:decomp_trans}
[D] \sim [\tilde C \otimes_{k(x)} R]
\end{align}
where $[\tilde C] \in {}_nBr(k(x))$ is the restriction of a unique $[C]  \in {}_nBr(k)$ and $[R]$ is an element in the image under $\theta$ . Write $[R] = [R_1 \otimes R_2 \otimes \cdots \otimes R_{r}]$ where $r$ is the size of the ramification locus of $D$ i.e,   number of points $P\in \mathbb{P}_0^1 -\infty$ at which the residue map is non-zero and $[R_i]$ are the image  of the ramification components under  $\theta$. \\
\indent For any  division algebra $D$ over $k(x)$ and $P \in \mathbb{P}_0^1 - \infty$, let  $D_P:= D \otimes_{k(x)} k(x)_P$ where $k(x)_P$ is the completion of $k(x)$ with respect to the discrete valuation corresponding to $P$ and let $\overline{D_P}$ denote the residue algebra of the division algebra in the class $[D_P]$. Similar notations, $\alpha_P$ and $\overline{\alpha_P}$  are used for any $\alpha \in Br(k(x))$.
\\
\indent The decomposition of $D$ given in (\ref{eqn:decomp_trans}) into unramified and ramified components easily yields  a short proof of  the \textit{Stability Theorem}.
\begin{thm}[{Stability Theorem, \cite[Theorem 3.5]{rapinchuk_genus_unramified}}] 
 Let $k$ be a field  of characteristic $\neq 2$ with the property that the genus of any element in ${}_2Br(k)$ is trivial. Then the same property holds for ${}_2Br(k(x))$.
\end{thm}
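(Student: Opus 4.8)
\textbf{Proof proposal.}

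The plan is to play the decomposition~(\ref{eqn:decomp_trans}) against Lemma~\ref{lem:imp}, exploiting that over a field of characteristic $\ne 2$ everything in sight is tame. Fix $[D]\in{}_2Br(k(x))$ and an arbitrary $[D']\in gen^s([D])$; we must show $[D']=[D]$. Since $D$ and $D'$ share a separable maximal subfield they have the same degree, which is a power of $2$, so $[D']$ is itself $2$-power torsion, say $[D']\in{}_{2^m}Br(k(x))$ with $2^m=\deg D$; and $[D]$ is tame at every discrete valuation of $k(x)$, so Lemma~\ref{lem:imp} is available at every place.

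First I would show that $D$ and $D'$ have the \emph{same} ramification data, i.e. $\rho_P([D'])=\rho_P([D])$ for every closed point $P$ of $\mathbb A^1_k$. By Lemma~\ref{lem:imp} --- more precisely, by the isomorphism $Z(\overline{D_P})\simeq Z(\overline{D'_P})$ over $k(P)$ that is extracted in its proof via \cite[Lemma~2.3 and Remark~2.6]{rapinchuk_genus_unramified} --- the centres of the residue algebras at $P$ agree. Since $[D_P]$ is $2$-torsion its residue character has order $\le 2$, so $Z(\overline{D_P})$ is either $k(P)$ itself or a quadratic extension of it; such a subfield of $k(P)^{sep}$ is rigid (isomorphic quadratic subextensions inside a separable closure coincide), so $Z(\overline{D'_P})$ is literally the same subfield. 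Its residue character then has the same kernel as that of $[D]$, and two characters of order $\le 2$ with equal kernel are equal, giving $\rho_P([D'])=\rho_P([D])$.

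Next, applying Faddeev's sequence~(\ref{eqn:faddeev}) in the group ${}_{2^m}Br(k(x))$, the vanishing of $\rho_P([D']-[D])$ for all $P$ forces $[D']=[D]+[\tilde E]$ for a unique $[E]\in{}_{2^m}Br(k)$. To conclude $[E]=0$ I would pick a $k$-rational point $Q$ of $\mathbb A^1_k$ lying outside the (finite) ramification locus of $D$ --- possible when $k$ is infinite; if $k$ is finite then $k(x)$ is a global field and the statement is already known by \cite[\S3.6]{rapinchuk_genus_unramified}. Then $[D]$, and hence $[D']=[D]+[\tilde E]$, is unramified at $Q$, and taking residues at $Q$ under the identification $IBr(k(x)_Q)\simeq Br(k(Q))=Br(k)$ of~(\ref{eqn:unramified_residue}) gives $[\overline{D'_Q}]=[\overline{D_Q}]+[E]$ with $[\overline{D_Q}]\in{}_2Br(k)$. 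Lemma~\ref{lem:imp} at $Q$ says $[\overline{D'_Q}]\in gen^s([\overline{D_Q}])$, which by hypothesis equals $\{[\overline{D_Q}]\}$; hence $[E]=0$ and $[D']=[D]$.

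The one delicate point is the first step: one must be sure that the genus condition, funnelled through Lemma~\ref{lem:imp}, recovers the residue \emph{character} at each place, not merely the abstract isomorphism type of the residue algebra or of its centre. This is exactly where exponent $2$ enters --- an order-$\le 2$ character is reconstructed from the fixed field of its kernel, so isomorphic centres force equal characters; for higher exponent this link breaks, consistently with the genus being genuinely nontrivial there. Everything else is bookkeeping with the exact sequences recalled in \S\ref{sec:unramified} and \S\ref{sec:purely_trans} together with the fundamental equality~(\ref{eqn:fund_equality}).
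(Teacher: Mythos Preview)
Your proposal is correct and follows essentially the same route as the paper: match the ramification data (the paper simply cites \cite[Lemma~2.5]{rapinchuk_genus_unramified}, whereas you re-derive it from Lemma~\ref{lem:imp} and the order-$\le 2$ observation), use Faddeev to reduce $[D']-[D]$ to a constant class from $Br(k)$, then specialise at an unramified $k$-rational point and invoke Lemma~\ref{lem:imp} together with the hypothesis on $k$ to kill that class. You are in fact slightly more careful than the paper in separating off the finite-$k$ case before choosing a rational point outside the ramification locus.
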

\begin{proof}
Let $[D],[E] \in {}_2Br(k(x))$ be such that $[E] \in gen^s([D])$. Write 
\begin{align*}
[D] \sim [\tilde{C} \otimes R_1 \otimes R_2 \otimes \cdots \otimes R_{r}]
\end{align*}
where $[\tilde{C}] \in {}_2Br(k(x))$ is the restriction of a unique $[C]  \in {}_2Br(k)$ and each $[R_i]$ is ramified at exactly $P_i \in \mathbb{P}_0^1 -\infty$ and nowhere else.  By \cite[Lemma 2.5]{rapinchuk_genus_unramified}, $\rho([E]) = \rho([D])$ . Therefore,
\begin{align*}
     [E] \sim [\tilde{C'} \otimes R_1 \otimes R_2 \otimes \cdots \otimes R_r]  
 \end{align*}
 where $[\tilde{C'}] \in {}_2Br(k(x))$ is the restriction of a unique $[C']  \in {}_nBr(k)$.
 We need to show $[C'] \sim [C]$. Let $P = \mathbb{P}_0^1(k) -\infty$ be a $k$-rational point where each $[R_i]$ is unramified. Note that $D$ and $E$ are unramified at $P$ and  recall that  the map (\ref{eqn:unramified_residue}) is a homomorphism. Therefore residue algebras at $P$ are given by
\begin{align*}
    [\overline{E_P}] \sim [C' \otimes \overline{R_{1P}} \otimes \overline{R_{1P}}  \otimes \cdots \otimes \overline{R_{1P}} ] \\
     [\overline{D_P}] \sim [C \otimes \overline{R_{1P}}  \otimes \overline{R_{1P}}  \otimes \cdots \otimes \overline{R_{1P}} ] 
\end{align*}
in ${}_2Br(k)$. Here $[\overline{\tilde{C}_{P}}] = [C]$ because the composite
\begin{align*}
    Br(k) \rightarrow Br(k(x)) \rightarrow Br(k(x)_P) \rightarrow Br(\overline{k(x)_P})
\end{align*}
is identity.\\
\indent Since $[E] \in gen^s([D])$, by Lemma \ref{lem:imp}, we get  
\begin{align*}
[\overline{E_P}] \in gen^s([\overline{D_P}])
\end{align*}
By hypothesis the genus of any element in ${}_2Br(k)$ is trivial. Therefore, $[\overline{E_P}]\sim [\overline{D_P}]$  and we conclude  that $[C'] \sim [C]$.
\end{proof}

It is tempting to know if the above theorem can be generalized to compute a formula for the genus of elements in ${}_nBr(k(x))$ for $n\geq 3$. We will now derive a formula for genus of arbitrary elements in  ${}_nBr(k(x))$ for $n\geq 3$ when $k$ is a number field containing primitive $n$-th root of unity and show that resulting the bound on the size of the genus is sharp. The idea is to relate the genus of a  given Brauer class to the genus of its unramified component and the ramification data.
\\
\indent Suppose $k$ is  any  field containing a primitive $n$-th root of unity $\omega$. By Merkurjev-Suslin theorem (originally due to S. Bloch for $k(x)$), the $n$-torsion of the Brauer group of $k(x)$ is generated by symbol algebras, so each element in $Br(k(x))$ is Brauer equivalent to $ \otimes_j(f_j, g_j)_{\omega}$ where $f_j, g_j \in k(x)$. By manipulation of symbol algebras (\cite[Chapter VII]{albert}), we can assume that $f_j, g_j \in k[x]$ and that
\begin{align*}
    (f_j, g_j)_{\omega} \sim (\otimes_l(a_l,b_l)_{\omega} )\otimes (\otimes_m (f_m, g_m)_{\omega})
\end{align*}
where $a_l,b_l \in k^*$ and for each $m$, either $f_m$or $g_m$ is a non-constant monic polynomial. This observation together with the split exact sequence (\ref{eqn:faddeev}) yields:

\begin{lemma} \label{lem:decom}
For any $[D] \in {}_nBr(k(x))$, we have  
 $D \sim \tilde{C} \otimes R_1 \otimes R_2 \otimes \cdots \otimes R_{r}$
where $[\tilde{C}] \in {}_nBr(k(x))$ is the restriction of a unique $[C]  \in {}_nBr(k)$ and each $[R_i]$ is ramified at exactly $P_i \in \mathbb{P}_0^1 -\infty$ and nowhere else. Moreover  $[R_i] \sim \otimes_j(f_{ij}, g_{ij})_{\omega}$ where for every $j$ either $f_{ij}$or $g_{ij}$ is a non-constant monic polynomial in $k[x]$.
\end{lemma}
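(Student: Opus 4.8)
The plan is to combine two things that are by now in place: the splitting $\theta$ of Faddeev's sequence (\ref{eqn:faddeev}), which separates an arbitrary class into its unramified part and one ``single-place'' ramified summand per ramification point, together with the Merkurjev--Suslin theorem and the symbol manipulations of \cite[Chapter VII]{albert} recorded above, which (using $\omega\in k$) rewrite any class in ${}_nBr(k(x))$ as a constant class tensored with a product of symbols each having a non-constant monic entry.

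First I recall what $\theta$ already gives. For $[D]\in{}_nBr(k(x))$ write $\rho([D])=\sum_{i=1}^{r}\chi_i$, where $P_1,\dots,P_r\in\mathbb{P}_0^1-\infty$ is the ramification locus and $\chi_i\in H^1(k(P_i),\mathbb{Z}/n\mathbb{Z})$ is the nonzero residue of $[D]$ at $P_i$. Put $S_i:=\theta(\chi_i)$, regarding $\chi_i$ as the element of the direct sum supported at $P_i$. Since $\theta$ is a section of $\rho$, the class $S_i$ is ramified at $P_i$ with residue $\chi_i$ and unramified at every other point of $\mathbb{P}_0^1-\infty$; consequently $[D]\otimes S_1^{-1}\otimes\cdots\otimes S_r^{-1}$ is unramified at every point of $\mathbb{P}_0^1-\infty$ and so, by exactness of (\ref{eqn:faddeev}), equals $[\tilde{C_0}]$ for a unique $[C_0]\in{}_nBr(k)$. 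Thus $[D]\sim[\tilde{C_0}\otimes S_1\otimes\cdots\otimes S_r]$ with each $S_i$ ramified at exactly $P_i$.

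Next I would apply the symbol observation above to each Brauer class $S_i$ separately: it gives $S_i\sim\tilde{C_i}\otimes\bigotimes_j(f_{ij},g_{ij})_\omega$, where $[C_i]\in{}_nBr(k)$ is the class of the product of the symbols with both entries in $k^*$ and, for every $j$, one of $f_{ij},g_{ij}$ is a non-constant monic polynomial in $k[x]$. Set $R_i:=\bigotimes_j(f_{ij},g_{ij})_\omega$. Since $\tilde{C_i}$ is unramified at every point of $\mathbb{P}_0^1-\infty$, the classes $R_i$ and $S_i$ have equal residues at every such point, so $R_i$ is still ramified at exactly $P_i$ (with residue $\chi_i\ne 0$) and nowhere else in $\mathbb{P}_0^1-\infty$. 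Absorbing the constant contributions into $[C]:=[C_0\otimes C_1\otimes\cdots\otimes C_r]\in{}_nBr(k)$ now gives $[D]\sim[\tilde C\otimes R_1\otimes\cdots\otimes R_r]$ with all the asserted properties; the uniqueness of $[C]$ is the injectivity of the restriction ${}_nBr(k)\hookrightarrow{}_nBr(k(x))$ in (\ref{eqn:faddeev}).

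I do not expect a genuine obstacle here, since the content is carried by the two ingredients; the only points requiring care are bookkeeping matters. One is that one cannot skip the $\theta$-step and merely sort the symbols of $[D]$ by ramification point: a symbol $(f,g)_\omega$ with $f$ monic will in general also ramify at the places dividing $g$, and these contributions from different symbols interfere, so the single-place property genuinely issues from the splitting of Faddeev's sequence, with the symbol rewriting applied afterwards to each isolated piece $S_i$ and the resulting everywhere-unramified classes $\tilde{C_i}$ harmlessly collected into $C$. The other is that, exactly as in (\ref{eqn:faddeev}), ``nowhere else'' is understood relative to $\mathbb{P}_0^1-\infty$ --- each $R_i$ will typically also be ramified at $\infty$, which is immaterial for the statement.
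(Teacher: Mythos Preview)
Your argument is correct and is exactly the approach the paper takes: the decomposition into single-place ramified pieces via the splitting $\theta$ of Faddeev's sequence is already set up in (\ref{eqn:decomp_trans}), and the lemma then feeds each piece through the Merkurjev--Suslin/Albert symbol rewriting recorded immediately before the statement, absorbing the resulting constant symbols into $[C]$. Your explicit bookkeeping (and the caution that one cannot simply sort the symbols of $[D]$ by ramification point) just makes precise what the paper compresses into the sentence ``This observation together with the split exact sequence (\ref{eqn:faddeev}) yields.''
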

For the rest of this section, let $k$ denote  a number field.
\begin{thm} \label{thm:genus_trans_sub} Let $n\geq 3$ and  $k$ be a number field containing a primitive $n$-th root of unity. Let $D \sim \tilde C \otimes R_1 \otimes R_2 \otimes \cdots \otimes R_{r}$ as in Lemma \ref{lem:decom}. Then
\begin{align*}
    gen^s([D]) \subseteq \{ [\tilde{C'} \otimes R_1^{i_1} \otimes R_2^{i_2} \otimes \cdots \otimes R_r^{i_r}] | [C'] \in gen^s([C]) \text{~and~} (i_l, ord([R_l])) =1 ~\forall 1\leq l\leq r \}
\end{align*}
In particular,
\begin{align*}
    |gen^s([D])| \leq |gen^s(C)|\phi(n)^r
\end{align*}
where $\phi(n)$ is the Euler's Totient function.
\end{thm}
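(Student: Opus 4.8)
The plan is to split an arbitrary $[E]\in gen^s([D])$ into an unramified and a ramified part, match the ramified part with the $R_i$ up to coprime exponents, and then recover the unramified part by specialising at $\infty$. Observe first that $[D]$ is unramified at $\infty$: each $R_i$ of Lemma~\ref{lem:decom} is unramified away from $P_i$, and $\tilde C$ is unramified everywhere. Now let $[E]\in gen^s([D])$ (recall $[E]$, like $[D]$, lies in ${}_nBr(k(x))$). For every $Q\in\mathbb P_0^1$ the completions $D_Q,E_Q$ share their separable maximal subfields (\cite[Corollary 2.4]{RR_genus}); since $char~k=0$ these algebras are tame, so $Z(\overline{D_Q})\simeq Z(\overline{E_Q})$ by \cite[Lemma 2.3 and Remark 2.6]{rapinchuk_genus_unramified}, and as these are the fixed fields of the kernels of the residue characters $\partial_Q([D]),\partial_Q([E])$ --- which moreover have the same order because $[\overline{E_Q}]\in gen^s([\overline{D_Q}])$ by Lemma~\ref{lem:imp} --- the two residue characters generate the same cyclic subgroup. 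Hence $\partial_Q([E])=0$ for $Q\notin\{P_1,\dots,P_r\}$ (in particular $[E]$ is unramified at $\infty$), while $\partial_{P_i}([E])=j_i\,\partial_{P_i}([D])$ for integers $j_i$ with $(j_i,ord(\partial_{P_i}([D])))=1$. Therefore $[E]\otimes\bigotimes_iR_i^{-j_i}$ is unramified at every point of $\mathbb P_0^1-\infty$, so by the Faddeev sequence (\ref{eqn:faddeev}) it is the restriction $\tilde{C'}$ of a unique $[C']\in{}_nBr(k)$; that is, $[E]\sim\tilde{C'}\otimes R_1^{j_1}\otimes\cdots\otimes R_r^{j_r}$.

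The crucial step I would carry out next is that each $R_i$ becomes split over the completion $k(\!(u)\!)$ at $\infty$, where $u=1/x$; equivalently $[\overline{(R_i)_\infty}]=0$ in $Br(k)$. This is a tame-symbol computation at $u$: since one of $f_{ij},g_{ij}$ is a monic polynomial (Lemma~\ref{lem:decom}), the standard symbol relations --- interchanging the two entries if necessary --- together with $1+u\mathcal{O}\subseteq k(\!(u)\!)^{*n}$ reduce $(f_{ij},g_{ij})_\omega$ over $k(\!(u)\!)$, modulo split classes, to $(u,\mu_{ij})_\omega$ for a constant $\mu_{ij}\in k^{*}$ assembled from leading coefficients and signs; multiplying over $j$, $[R_i]$ restricts over $k(\!(u)\!)$ to $(u,\mu_i)_\omega$ with $\mu_i=\prod_j\mu_{ij}$, and the hypothesis that $R_i$ be unramified at $\infty$ says precisely that $\mu_i\in k^{*n}$, whence this restriction is trivial. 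Two consequences follow. First, $[\overline{D_\infty}]=[C]$ and $[\overline{E_\infty}]=[C']$, since the $R_i$- and $R_i^{j_i}$-factors all die over $k(\!(u)\!)$ and $\tilde C,\tilde{C'}$ restrict there from $[C],[C']$. Second, applying this to $[R_i]^{m}$ with $m=ord(\partial_{P_i}([D]))$: that class is unramified at every finite point, hence a restriction $\tilde{C''}$ from ${}_nBr(k)$ by (\ref{eqn:faddeev}), and $C''\otimes k(\!(u)\!)=[R_i]^m\otimes k(\!(u)\!)=0$, so $C''=0$ (as $Br(k)\hookrightarrow Br(k(\!(u)\!))$) and $[R_i]^m=0$; thus $ord([R_i])=ord(\partial_{P_i}([D]))$, and the condition on $j_i$ becomes $(j_i,ord([R_i]))=1$.

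To finish the inclusion, apply Lemma~\ref{lem:imp} at the valuation of $k(x)$ attached to $\infty$, where $[D]$ is unramified and hence tame: $[\overline{E_\infty}]\in gen^s([\overline{D_\infty}])$, i.e. $[C']\in gen^s([C])$. This shows that $gen^s([D])$ is contained in the displayed set. For the cardinality, suppose $\tilde{C_1'}\otimes\bigotimes_lR_l^{i_l}\sim\tilde{C_2'}\otimes\bigotimes_lR_l^{i_l'}$ with all exponents coprime to the corresponding $ord([R_l])$; comparing residues at each $P_l$ forces $i_l\equiv i_l'\pmod{ord([R_l])}$, and then comparing unramified parts forces $[C_1']=[C_2']$. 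Hence the displayed set has exactly $|gen^s([C])|\cdot\prod_l\phi(ord([R_l]))$ elements, and since $ord([R_l])\mid n$ gives $\phi(ord([R_l]))\mid\phi(n)$, this is at most $|gen^s([C])|\,\phi(n)^r$.

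I expect the genuine obstacle to be the tame-symbol computation of the middle paragraph: verifying that the monic normalisation of Lemma~\ref{lem:decom} really forces each $R_i$ to split over $k(\!(u)\!)$ --- and, in passing, that $ord([R_i])$ equals the order of its residue at $P_i$ --- while keeping careful track of which entry of each symbol carries the monic polynomial and of the residual constants. Once that local triviality at $\infty$ is in hand, the rest is formal manipulation of residue characters and the Faddeev sequence.
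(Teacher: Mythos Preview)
Your approach --- specialising at $\infty$ to read off the unramified part $[C']$ directly --- is cleaner than the paper's and, if it worked, would not even need the number-field hypothesis. The symbol computation in your middle paragraph is essentially right: the monic normalisation of Lemma~\ref{lem:decom} does kill the inertial part of each $R_i\otimes k(\!(u)\!)$, so $R_i\otimes k(\!(u)\!)\sim(u,\mu_i)_\omega$ for some constant $\mu_i\in k^*$.

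The gap is earlier, in your opening observation. The claim that each $R_i$ is unramified at $\infty$ is not justified by Lemma~\ref{lem:decom}: the Faddeev sequence~(\ref{eqn:faddeev}) only records residues at points of $\mathbb P^1_0-\infty$, so ``ramified at exactly $P_i$ and nowhere else'' means nowhere else among the \emph{finite} points. In fact Faddeev reciprocity forces $\partial_\infty(R_i)=-\mathrm{cor}_{k(P_i)/k}\bigl(\partial_{P_i}(R_i)\bigr)$, which is typically nonzero --- always nonzero, for instance, when $P_i$ is $k$-rational. Your own computation exposes this: you need $\mu_i\in k^{*n}$ to conclude splitting, and that condition is precisely unramifiedness at $\infty$. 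When it fails, $D_\infty$ is ramified, $\overline{D_\infty}$ lives over a proper cyclic extension of $k$, and the identification $[\overline{D_\infty}]=[C]$ collapses. (Your side argument that $ord([R_i])=ord(\partial_{P_i}([D]))$ uses the same splitting-at-$\infty$ step, so it is affected too.)

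The paper proceeds differently. Rather than seeking one point where every $R_i$ splits, it works place by place: for each finite place $v$ of $k$, Lemma~\ref{lem:double_val} produces a rational point $P_v$ (depending on $v$) at which every $R_i$ is unramified and, moreover, $[\overline{(R_i)_{P_v}}]$ splits over $k_v$. Then $[\overline{D_{P_v}}]_v=[C]_v$ and $[\overline{E_{P_v}}]_v=[C']_v$, so Lemma~\ref{lem:imp} gives $[C']_v\in gen^s([C]_v)$ for every $v$, and the global conclusion $[C']\in gen^s([C])$ is assembled from the Albert--Brauer--Hasse--Noether description of $Br(k)$. That last step genuinely consumes the number-field hypothesis --- which is exactly why the paper leaves the case of arbitrary $k$ as an open question.
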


 \begin{remark}\normalfont
 One can view $\tilde{C}$ and $R_1 \otimes R_2 \otimes \cdots \otimes R_{r}$  in Lemma \ref{lem:decom} respectively as unramified and ramified components of $D$ (note that this decomposition is not unique). Then Theorem \ref{thm:genus_trans_sub} expresses the genus of $D$ in terms of the genus of its unramified component  and  the ramification data. Note that the genus of the unramified component $\tilde{C}$ is  expressed in terms of genus of the residue algebra $C$ with respect to the valuation corresponding to  any  rational point. 
 \end{remark}
 \indent In order to prove the theorem, we need another lemma.   Note that for a class $[D] \in Br(k(x))$ that is unramified at  $P \in \mathbb{P}_0^1(k) -\infty$, the residue class $[\overline{D_P}]$ is in $Br(k)$. For a place  $v$  of $k$, let $[\overline{D_P}]_v$  denote the  completion of $[\overline{D_P}]$  at the place $v$ of $k$.  Let $PF(k)$ denote the set of finite places of $k$.
 
 \begin{lemma}\label{lem:double_val}
 In the notation of Theorem \ref{thm:genus_trans_sub}, given any $v\in PF(k)$, there exists a $k$-rational point $P_v \in \mathbb{P}_0^1(k) -\infty$ such that for every $i$, $[R_i]$ is unramified at $P_v$ and $[\overline{(R_{i})_{P_v}}]_v$ is trivial in  $Br(k_v)$.
 \end{lemma}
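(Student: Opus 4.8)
The plan is to realize $P_v$ as a $v$-adically chosen rational specialization $x=a$ (with $a\in k$) arranged so that, after specializing, one slot of each symbol occurring in $R_i$ becomes an $n$-th power in $k_v^*$, forcing that symbol, and hence $[\overline{(R_i)_{P_v}}]_v$, to split.

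\textbf{Step 1 (reduction to producing $n$-th powers).} Using Lemma \ref{lem:decom}, for each pair $(i,j)$ fix a non-constant \emph{monic} polynomial $h_{ij}\in\{f_{ij},g_{ij}\}$. I claim it is enough to find $a\in k$ which is not a root of any of the $f_{ij},g_{ij}$ and for which $h_{ij}(a)\in (k_v^*)^n$ for all $i,j$. Indeed, if $a$ is not a zero of any $f_{ij}$ or $g_{ij}$ (these polynomials have no poles away from $\infty$), then every symbol $(f_{ij},g_{ij})_\omega$, hence each $R_i$, is unramified at $P_v:=\{x=a\}$; applying the residue isomorphism \eqref{eqn:unramified_residue} for $k(x)$ at $P_v$ term by term gives $\overline{(R_i)_{P_v}}\sim\bigotimes_j (f_{ij}(a),g_{ij}(a))_\omega$ in $Br(k)$, hence the same in $Br(k_v)$ after completing. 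If moreover $h_{ij}(a)=c^n$ for some $c\in k_v^*$, then (say, when $h_{ij}=f_{ij}$; the other case is symmetric) bilinearity of symbol algebras \cite[Ch.~VII]{albert} gives $(f_{ij}(a),g_{ij}(a))_\omega=(c^n,g_{ij}(a))_\omega\sim (c,g_{ij}(a))_\omega^{\otimes n}$, which is trivial because $(c,g_{ij}(a))_\omega$ is killed by $n$. Thus $[\overline{(R_i)_{P_v}}]_v=0$ for every $i$, and $P_v$ lies in $\mathbb{P}^1_0(k)-\infty$, as required.

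\textbf{Step 2 (producing $a$).} Recall that $(\mathcal O_v^*)^n$ is a finite-index, hence open, subgroup of the compact group $\mathcal O_v^*$, so $(k_v^*)^n$ is open in $k_v^*$ and $1+\mathfrak m_v^N\subseteq(\mathcal O_v^*)^n$ for $N$ large. Fix a uniformizer $\pi$ of $k_v$ and, for a positive integer $m$, set $t_0:=\pi^{-mn}$. Since each $h_{ij}$ is monic of degree $d_{ij}\ge 1$, a short estimate of valuations shows that once $m$ is large enough (one value of $m$ works for all the finitely many $h_{ij}$ at once) the leading term dominates, and $h_{ij}(t_0)=(\pi^{-m d_{ij}})^n(1+\epsilon_{ij})$ with $v(\epsilon_{ij})\ge N$; hence $h_{ij}(t_0)\in(k_v^*)^n$ for all $i,j$. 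Therefore $U:=\bigcap_{i,j}h_{ij}^{-1}\bigl((k_v^*)^n\bigr)$ is a finite intersection of preimages of the open set $(k_v^*)^n$ under continuous maps, so $U$ is open, and $t_0\in U$ shows $U\neq\varnothing$. As $k$ is dense in $k_v$, $U\cap k$ is infinite, so I can choose $a\in U\cap k$ avoiding the finite set of $k$-roots of the polynomials $f_{ij},g_{ij}$. Then $P_v:=\{x=a\}$ satisfies all requirements by Step 1.

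The step I expect to be the actual content, rather than bookkeeping, is the valuation estimate in Step 2: this is precisely where the monicity furnished by Lemma \ref{lem:decom} is indispensable — a non-trivial leading coefficient would contribute an uncontrolled class in $k_v^*/(k_v^*)^n$ — and where one must check that a single exponent $m$, hence a single $a\in k$, handles all of the finitely many polynomials $h_{ij}$ simultaneously. The remaining ingredients (compatibility of \eqref{eqn:unramified_residue} with tensor products, and the identity $(c^n,\cdot)_\omega\sim 0$) are standard.
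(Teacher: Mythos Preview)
Your proof is correct and follows essentially the same strategy as the paper: specialize at a rational point $a$ of large negative $v$-adic valuation divisible by $n$, use monicity so that the leading term (an $n$-th power) dominates, and conclude that one slot of each specialized symbol lies in $(k_v^*)^n$. The paper differs only cosmetically --- it chooses a uniformizer $\pi_v\in k$ directly (so no density argument is needed) and phrases the final vanishing via norms and conductors of the finitely many degree-$\le n$ extensions of $k_v$ rather than via openness of $(k_v^*)^n$.
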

 \begin{proof}
  By Lemma \ref{lem:decom},   $[R_i] \sim \otimes_j (f_{ij}, g_{ij})_{\omega}$ where  for every $j$ either $f_{ij}$ or $g_{ij}$ is a non-constant monic polynomial. Without loss of generality assume that $g_{ij}$ is a non-constant monic polynomial  for every $i$ and $j$. Outside a finite set $S$ of points  in $\mathbb{P}_0^1(k) -\infty$, the extensions $k(x)(\sqrt[n]{f_{ij}})/k(x)$ and $k(x)(\sqrt[n]{g_{ij}})/k(x)$ are unramified for every $i,j$. Let $\pi_v\in k$ be a uniformizer in $\mathcal{O}_v$.   Let $m\in\mathbb{Z}$ be smallest valuation (with respect to $v$) of the set of coefficients in $g_{ij}$ for all $i,j$.   Let $N$ be  a non-negative integer  such that $n|N$ and  $N\geq -m +1 $. Set $P_N := \frac{1}{\pi_v^N} \in \mathbb{P}_0^1(k) -\infty - S$.  Then note that 
  \begin{align*}
       g_{ij}(P_N) = \pi^{-deg(g_{ij})N} \cdot b_{ij}
  \end{align*}
  where  $b_{ij}\in (1+\pi_v^{N+m}\mathcal{O}_v)$ . Therefore, 
  \begin{align*}
     (\overline{(f_{ij}, g_{ij})_{\omega})_{P_N}}\sim (a_{ij}, b_{ij})_{\omega}
  \end{align*}
for some $a_{ij}\in k^*$.  Since $k_v$ is a local field, there are only finitely many  extensions of degree at most $n$ (\cite[Chapter II, \S 5, Proposition 14]{lang_number_theory}).  Let $M$ be the supremum of the conductors  of these extensions. So by choosing $N\geq M-m$ together with the above conditions on $N$, we deduce  that $b_{ij} \in Norm({k_v(\sqrt[n]{a_{ij}})/k_v)})$ and hence $(a_{ij}, b_{ij})_{\omega} \otimes_k k_v$ are split for all $i,j$. Setting $P_v = P_N$ proves the lemma.
 \end{proof}
 We are now ready to prove Theorem \ref{thm:genus_trans_sub}.\\
 \noindent \textit{Proof of Theorem \ref{thm:genus_trans_sub}:}
 Let $[E] \in gen^s([D])$. We now make the following observation. By \cite[Lemma 2.5] {rapinchuk_genus_unramified}, $[E]$ is unramified  at a valuation if and only if $[D]$ is. Moreover, for any point $P$,  $\rho_P([E]) =\rho_P([D^{\otimes {i_P}}])$  for some $i_P$  coprime to $ord([D])$.  Recall that  each $[R_i]$ is ramified  exactly at $P_i$ and nowhere else, so by Faddeev's exact sequence (\ref{eqn:faddeev}),  
 \begin{align*}
 \rho([E]) =  \rho( [R_1^{i_1} \otimes R_2^{i_2} \otimes \cdots \otimes R_r^{i_r}]) 
 \end{align*}

Therefore we get 
 \begin{align*}
     [E] \sim [\tilde{C'} \otimes R_1^{i_1} \otimes R_2^{i_2} \otimes \cdots \otimes R_r^{i_r}]  
 \end{align*}
 where $(i_l, ord(\rho[R_l]) =1) ~\forall  1\leq l\leq r$ and $C' \in Br(k)$.  So to prove the theorem  it remains show that $[C']\in gen^s([C])$. \\
 
Let $v\in PF(k)$. By Lemma \ref{lem:double_val}, there exists a $k$-rational point $P_v \in \mathbb{P}_0^1(k) -\infty$ such that for every $i$, $[R_i]$ is unramified at $P_v$ and $[\overline{R_{iP_v}}]_v$ is split over $k_v$. Therefore in ${}_nBr(k_v)$, we have 
 \begin{align*}
     [\overline{D_{P_v}}]_v \sim [\overline{\tilde{C}_{P_v}}]_v = [C]_v\\
     [\overline{E_{P_v}}]_v \sim [\overline{\tilde{C'}_{P_v}}]_v =[C']_v
 \end{align*}

By hypothesis, $[E] \in gen^s([D])$. Therefore by Lemma \ref{lem:imp} and \cite[Corollary 2.4]{RR_genus}, we conclude that $[C']_v \in gen^s([C]_v)$ for every $v\in PF(k)$. Since ${}_nBr(k_v)$ is cyclic and period equals index for Brauer classes over local fields, we deduce that for every $v\in PF(k)$, we have  $[C']_v \sim [C^{\otimes j_v}]_v$ for some $j_v$ such that $(j_v,n)=1$. Note that  since $k$ contains $n$-th root of unity, and $n\geq 3$, $k$ has no real embeddings. Now by \cite[\S 18.4, Corollary b]{pierce_assoc}, it is easy to see that $[C'] \in gen^s([C])$. 
This completes the proof. \\

\indent Theorem \ref{thm:genus_trans_sub} can be viewed as a  generalization of   "genus stability" for higher dimensional division algebras which was queried in \cite[page 284]{RR_genus}.

 \begin{remark}\normalfont
The behaviour of genus of  division algebras of degree $\geq 3$ is studied in  \cite[Theorem 3.3]{rapinchuk_genus_unramified}. However, we  note that their results do not yield any finite upper bound for the genus over a number field $k$ since the size of $ gen^s(\Delta) \cap {}_nBr(k)$ is not bounded for $[\Delta] \in Br(k)$. See (\cite[\S 1]{rapinchuk_russian}). \\
\indent  Yet another bound  in \cite[Corollary 8.4]{size_genus} is given by 
\begin{align*}
    gen^s([D]) \leq \phi(n)^r \cdot n^{|S|}
\end{align*}
where $r$ is the number of ramification places of $D$ in $V$ (where $V$   and $S$ are given in \cite[\S8.2]{size_genus}). One can easily check that  our bound given in Theorem \ref{thm:genus_trans_sub} is better than the bound above. 
 \end{remark}
 
 \begin{remark} \normalfont
 The bound given in Theorem \ref{thm:genus_trans_sub} is also sharp. Let $k:=\mathbb{Q}(\omega)$ where $\omega$ is a primitive $n$-th root of unity for $n\geq 3$. Let $f\in k[x]$ be a  non-constant monic polynomial  and $a \in k^*$ such that  $D = (f,a)_{\omega}$,  is a division algebra ramified at exactly one point in $\mathbb{P}^1_0 - \infty$. Then Theorem \ref{thm:genus_trans_sub} yields:
 \begin{align*}
     gen^s([D]) \subseteq \{[D^{\otimes i}]| (i,n) =1\}
 \end{align*}
But the above inclusion is equality since  $[D^{\otimes i}]$ and $[D]$ generate the same subgroup of the Brauer group whenever $(i,n) =1$.
\end{remark}

   The proof of Theorem \ref{thm:genus_trans_sub} makes use of results from number fields and local fields. So it cannot be directly used to prove a similar statement for purely transcendental extensions of  arbitrary fields. There are no results so far in the literature to compute the genus of division algebras over such fields of degree $\geq 3$ in terms of its ramified and unramified components. It would be interesting to know if this is true in general.
   
\begin{question}\normalfont
 Does Theorem \ref{thm:genus_trans_sub} hold if $k$ is replaced by an arbitrary field?
  \end{question}

\section{Genus  over complete discrete valued fields}\label{sec:gcdvf}
In this section, we  first give a formula for the genus of division algebras of prime degree over $k$. We then describe the splitting genus of division algebras of arbitrary degree over a complete discrete valued field in terms of the genus of its ramified and unramified components. \\
\indent Throughout this section $k$ denotes a complete discrete valued field with residue field $\overline{k}$.  Recall from \S\ref{sec:cdvf} that  $IBr(k)$ and $SBr(k)$ denote the subgroup of inertial and tame classes in $Br(k)$.  For a division algebra $D$ over $k$, let $\overline{D}$ denote the residue division algebra. If  $\alpha \in Br(k)$ denotes the class of $D$, we set  $e_{\alpha} := e_D = |\Gamma_D: \Gamma_k|$. 

\subsection{The genus of division algebras of prime degree}
First we study the behaviour of Brauer classes under totally ramified extensions of $k$.
\begin{lemma}\label{lem:totram}
Suppose  $D$ is an unramified division algebra over $k$ that is split by a totally ramified extension, then $D\cong k$.
\end{lemma}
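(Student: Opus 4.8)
The plan is to push everything down to the residue field by means of the isomorphism (\ref{eqn:unramified_residue}). Write $[D]$ for the class of $D$ in $Br(k)$; since $D$ is a division algebra, showing $[D]=0$ is the same as showing $D\cong k$. Because $D$ is unramified we have $[D]\in IBr(k)$, and under the isomorphism $IBr(k)\xrightarrow{\sim}Br(\overline k)$, $[D]\mapsto[\overline D]$, of (\ref{eqn:unramified_residue}). Hence it suffices to prove that $\overline D$ is split over $\overline k$. Let $L/k$ be a finite totally ramified extension splitting $D$; ``totally ramified'' means that the residue field does not grow, i.e.\ $\overline L=\overline k$, so what we must do is transfer the splitting from $L$ down to $\overline k$.

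Since $D$ is unramified, $e_D=1$ by the fundamental equality (\ref{eqn:fund_equality}), so the valuation ring $V_D$ is a free $\mathcal O_k$-module whose reduction modulo the maximal ideal is the central simple $\overline k$-algebra $\overline D$ of the same rank; thus $V_D$ is an Azumaya $\mathcal O_k$-algebra with generic fibre $D$ and special fibre $\overline D$ (cf.\ \cite[\S2]{wadsworth}). Base changing along the finite extension of discrete valuation rings $\mathcal O_k\hookrightarrow\mathcal O_L$, the algebra $V_D\otimes_{\mathcal O_k}\mathcal O_L$ is Azumaya over $\mathcal O_L$ with generic fibre $D\otimes_k L$ and special fibre $\overline D\otimes_{\overline k}\overline L=\overline D$ (using $\overline L=\overline k$). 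Now $Br(\mathcal O_L)$ injects into $Br(L)$ (apply \cite[Theorem 6.8.3]{poonen_rational} to $X=\mathrm{Spec}\,\mathcal O_L$, or the classical injectivity for regular local rings), so since $D\otimes_k L$ is split the class of $V_D\otimes_{\mathcal O_k}\mathcal O_L$ in $Br(\mathcal O_L)$ vanishes; reducing modulo the maximal ideal of $\mathcal O_L$ (the map $Br(\mathcal O_L)\to Br(\overline L)$, an isomorphism as $\mathcal O_L$ is Henselian) shows that $\overline D=\overline D\otimes_{\overline k}\overline L$ is split over $\overline L=\overline k$. By the first paragraph this gives $[D]=0$, i.e.\ $D\cong k$.

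Alternatively, one may avoid scheme-theoretic language and simply use that the restriction homomorphism $IBr(k)\to IBr(L)$ is compatible, through the residue isomorphisms for $k$ and $L$, with the restriction $Br(\overline k)\to Br(\overline L)$ induced by $\overline L/\overline k$ (a property of inertial lifts in \cite[\S2]{wadsworth_henselian}); since $\overline L=\overline k$ this restriction is the identity, so $[D\otimes_k L]=0$ forces $[\overline D]=0$. The only non-formal ingredient, and the step I would be most careful about, is precisely this compatibility of scalar extension with passage to residue algebras for unramified classes — concretely, that the Azumaya property of $V_D$ is preserved under $-\otimes_{\mathcal O_k}\mathcal O_L$ and that $Br(\mathcal O_L)\hookrightarrow Br(L)$, both of which hold for complete discretely valued rings irrespective of the residue characteristic, so no prime-to-$p$ caveat is needed here. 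Everything else is bookkeeping with (\ref{eqn:unramified_residue}).
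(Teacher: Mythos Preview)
Your argument is correct. The essential content is exactly what you identify in the alternative formulation: for an unramified class the isomorphism $IBr(\cdot)\simeq Br(\overline{\cdot})$ of (\ref{eqn:unramified_residue}) is natural in the base, and since a totally ramified extension does not change the residue field, the image of $[D]$ in $Br(\overline L)=Br(\overline k)$ must vanish, hence $[\overline D]=0$ and so $[D]=0$. Your Azumaya-algebra version spells this naturality out concretely: $V_D$ is Azumaya over $\mathcal O_k$ because $e_D=1$ forces $M_D=\mathfrak m_k V_D$, so $V_D/\mathfrak m_k V_D=\overline D$ is central simple over $\overline k$; base change to $\mathcal O_L$ and the injectivity $Br(\mathcal O_L)\hookrightarrow Br(L)$ then finish the job.

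The paper's proof is different in character: it simply invokes \cite[Theorem~1]{morandi_henselization} together with defectlessness of discrete valuations \cite[p.~359]{morandi_defective}, treating the lemma as a special case of a general statement about Henselian valued division algebras. Your route is more self-contained and more transparent in the complete discretely valued setting, since it avoids the machinery of Henselizations and works directly with the Azumaya order $V_D$; on the other hand, the cited result of Morandi handles more general (Henselian, not necessarily discrete or complete) valuations, which your argument as written does not. One small point: you tacitly take $L/k$ finite, which is harmless since any splitting field contains a finite splitting subfield and finite subextensions of a totally ramified extension remain totally ramified.
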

\begin{proof}
This is basically  \cite[Theorem 1]{morandi_henselization} together with the fact that discrete valuations are defectless (\cite[page 359]{morandi_defective}).
\end{proof}

\begin{lemma}\label{lem:exponent}
Let $N$ be a tame division algebra  split by a totally ramified extension\footnote{This is equivalent to saying that $N$ is NSR by \cite[Theorem 2.4]{karim_nsr})}. Then $ord([N]) = e_{[N]}$.
\end{lemma}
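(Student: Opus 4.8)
The plan is to analyze the division algebra $N$ through its residue algebra $\overline{N}$, exploiting the structure of tame algebras split by a totally ramified extension (i.e.\ NSR algebras). First I would recall from Proposition~\ref{prop:nsr} and the discussion in \S\ref{sec:cdvf} that for such $N$, the residue algebra $\overline{N}$ is a field, $\overline{N}/\overline{k}$ is cyclic, and the fundamental equality gives $[\overline{N}:\overline{k}] = [\Gamma_N:\Gamma_k] = e_{[N]} = \sqrt{[N:k]}$. Write $e := e_{[N]}$ and set $n := \mathrm{ord}([N])$; since $[N:k] = e^2$ and index divides $[N:k]$ while exponent divides index, we certainly have $n \mid e$, so it remains to show $e \mid n$, equivalently that $[N^{\otimes n}] = 0$ already forces $e \mid n$.

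\medskip

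The key step is to track the ramification index (equivalently, the residue character) under taking tensor powers. By Proposition~\ref{prop:nsr} (or directly \cite[Theorem 2.9]{wadsworth_henselian}), $N$ contains the inertial lift $L$ of $\overline{N}$ as a maximal subfield, so $N$ is cyclic, say $N = (L/k, \sigma, \pi)$ for a uniformizer $\pi$ of $k$, where $\mathrm{Gal}(L/k) \cong \mathbb{Z}/e\mathbb{Z}$. Then for any $i$, $N^{\otimes i} \sim (L/k, \sigma, \pi^i)$, and a standard computation with cyclic algebras (or the residue map $\rho_v^n$ from \S\ref{sec:unramified}, whose value on $N^{\otimes i}$ is $i$ times the character $\chi_N$ cutting out $L$) shows that $N^{\otimes i}$ is unramified exactly when $e \mid i$. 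Concretely: the image of $\chi_{N^{\otimes i}}$ equals $\Gamma_{N^{\otimes i}}/\Gamma_k$, and since $\chi_N$ has order $e$, $i\chi_N = 0$ iff $e \mid i$. In particular, taking $i = n$: since $[N^{\otimes n}] = 0$ is in particular unramified, we get $e \mid n$. Combined with $n \mid e$ from the first paragraph, this yields $n = e$.

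\medskip

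I expect the main obstacle to be purely expository: making sure the identification of $N$ as a cyclic algebra $(L/k,\sigma,\pi)$ with $L$ the inertial lift is correctly justified (this is where \cite[Chapter III, \S5, Theorem 3]{serre_local} for cyclicity of $L/k$ and \cite[Theorem 2.9]{wadsworth_henselian} for $L \subset N$ enter, exactly as in the proof of Proposition~\ref{prop:nsr}), and then citing cleanly the fact that for a cyclic algebra $(L/k,\sigma,\pi^i)$ over a complete discretely valued field the ramification index equals $e/\gcd(i,e)$, so that the order of $[N]$ in $Br(k)$ is $e$. An alternative, perhaps cleaner route avoiding explicit cyclic algebra arithmetic: use the exact sequence $0 \to IBr(k) \to SBr(k) \xrightarrow{e_{(-)}} \Gamma_k^{\vee}$-type map — more precisely, the residue homomorphism $\rho_v$ is a group homomorphism on tame classes, $\rho_v([N]) = \chi_N$ has order $e$ in $H^1(\overline{k},\mathbb{Q}/\mathbb{Z})$, hence $n = \mathrm{ord}([N]) \geq \mathrm{ord}(\chi_N) = e$; and $n \leq e$ since the index is $e^2$ and... wait, that only gives $n \mid e^2$. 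So one still needs $n \mid e$: this follows because $N^{\otimes e}$ is unramified (as $\rho_v([N^{\otimes e}]) = e\chi_N = 0$) and split by a totally ramified extension (namely the same one splitting $N$), hence trivial by Lemma~\ref{lem:totram}; therefore $n \mid e$, and with $n \geq e$ we conclude $n = e$. This second argument is the one I would write up, since it uses only Lemma~\ref{lem:totram} and the homomorphism property of $\rho_v$, both already available.
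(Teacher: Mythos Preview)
Your proposal is correct, and the argument you settle on at the end is essentially the paper's proof: the paper cites \cite[Corollary~6.10]{wadsworth_henselian} for $e_{[N]} \mid \mathrm{ord}([N])$ (the same content as your observation that the residue character $\chi_N$ has order $e$), then observes that $[N^{\otimes e_{[N]}}]$ is unramified and split by a totally ramified extension, hence trivial by Lemma~\ref{lem:totram}, giving $\mathrm{ord}([N]) \mid e_{[N]}$. One small aside: your first-paragraph claim $n \mid e$ via ``exponent divides index'' is already correct as stated --- the index of the division algebra $N$ is $\sqrt{[N:k]} = e$, not $e^2$ as you later second-guess --- so that direction is in fact available without invoking Lemma~\ref{lem:totram}.
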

\begin{proof}
Since $e_{[N]} \vert ord([N])$ (\cite[Corollary 6.10]{wadsworth_henselian}), it suffices to show that $[N^{\otimes e_{[N]}}]$ is trivial. Note that $[N^{\otimes e_{[N]}}]$ is unramified by (\cite[Theorem 5.6 (b)]{wadsworth_henselian}).  By hypothesis, $[N^{\otimes e_{[N]}}]$ is split by a totally ramified extension of $k$ and therefore is trivial by Lemma \ref{lem:totram}. 
\end{proof}

\begin{prop} \label{prop:genusramified}
Let $N$ be a tame division algebra  split by a totally ramified extension. Then
\begin{align*}
    gen^s_{spl}([N]) =  gen^s([N]) = \{ [N^{\otimes i}] | (i, e_{[N]}) = 1\}
    \end{align*}
\end{prop}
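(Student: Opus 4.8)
The plan is to prove the two claimed equalities in turn, using the structure of NSR algebras together with Lemma \ref{lem:exponent}. First I would prove the inclusion $\{[N^{\otimes i}] \mid (i,e_{[N]})=1\} \subseteq gen^s_{spl}([N]) \subseteq gen^s([N])$. The second containment here is immediate from the definitions: any separable maximal subfield of a division algebra in the class of $N$ splits $N$, so sharing splitting fields is a weaker condition than sharing maximal subfields, hence $gen^s_{spl}([N]) \supseteq$ (more precisely, anything in the maximal-subfield genus lies in the splitting genus) — I would state this containment carefully in the direction actually needed, namely $gen^s([N]) \subseteq gen^s_{spl}([N])$ is \emph{false} in general; rather $gen^s_{spl}$ is the a priori larger set, so the chain I want is $\{[N^{\otimes i}]\} \subseteq gen^s([N]) \subseteq gen^s_{spl}([N])$ for the lower bound, and then $gen^s_{spl}([N]) \subseteq \{[N^{\otimes i}]\}$ for the upper bound, which combined with these gives all three equal. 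For the first containment, when $(i,e_{[N]})=1$, Lemma \ref{lem:exponent} gives $\mathrm{ord}([N]) = e_{[N]}$, so $[N^{\otimes i}]$ generates the same cyclic subgroup of $Br(k)$ as $[N]$; hence $N^{\otimes i}$ is a power of $N$ up to the group generated and $N$ is a power of $N^{\otimes i}$, which by the standard fact (used throughout the genus literature) that $D$ and $D^{\otimes j}$ have the same maximal subfields and the same splitting fields whenever they generate the same subgroup, yields $[N^{\otimes i}] \in gen^s([N])$.

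Next I would prove the key direction $gen^s_{spl}([N]) \subseteq \{[N^{\otimes i}] \mid (i,e_{[N]})=1\}$. Let $[D'] \in gen^s_{spl}([N])$, so $D'$ and $N$ have the same finite-dimensional separable splitting fields. Since $N$ is NSR it is tame with residue algebra $\overline{N}$ a field, and $N$ contains a totally ramified maximal subfield $F/k$; by the fundamental equality $[F:k] = e_{[N]} = \sqrt{[N:k]}$. Such an $F$ is separable over $k$ (totally ramified extensions of a discrete valued field of the relevant kind are separable — or use that $N$ is tame so this maximal subfield is separable, cf. the discussion of NSR algebras in \S\ref{sec:cdvf}) and it splits $N$, hence splits $D'$. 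Likewise the inertial lift $L$ of $\overline{N}$ is an unramified, hence separable, maximal subfield of $N$ splitting $N$, so it splits $D'$ too. I would then analyze what it means for $D'$ to be split both by a totally ramified separable extension of degree $e_{[N]}$ and by an unramified extension of degree $e_{[N]}$. Being split by a totally ramified extension forces, via the residue map and Lemma \ref{lem:totram}, a constraint on the unramified/inertial component of $D'$; combined with tameness of $D'$ (which I would need to argue — being split by an unramified extension makes $D'$ inertially split, i.e. $[D'] \in SBr(k)$) and the decomposition $D' \sim I' \otimes_k N'$ with $I'$ inertial and $N'$ NSR, I get that $I'$ is split by a totally ramified extension and by Lemma \ref{lem:totram} is trivial, so $[D'] = [N']$ is itself NSR. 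Finally, $N'$ is split by the unramified extension $L$ of degree $e_{[N]}$, which bounds $e_{[N']} \le e_{[N]}$; and comparing indices via $[F:k]=e_{[N]}$ splitting $D'=N'$ forces $e_{[N']} = e_{[N]}$. Then $[D']$ and $[N]$ both lie in the cyclic group generated by a common primitive element of order $e_{[N]}$ in the relevant quotient, and a counting/cyclicity argument (period equals index here, as for NSR algebras $\mathrm{ord} = e$) pins down $[D'] = [N^{\otimes i}]$ with $(i,e_{[N]})=1$.

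The main obstacle I expect is the middle step: showing that an element $[D']$ sharing splitting fields with the NSR algebra $N$ is itself forced to be NSR (in particular tame) and to have the same ramification index. The subtlety is that a priori $D'$ need only share \emph{separable} splitting fields, and one must extract from the existence of one totally ramified separable splitting field and one unramified splitting field — each of degree exactly $\sqrt{[N:k]}$ — enough rigidity to kill the inertial part of $D'$ and match ramification indices. I would handle this by passing to residues: the unramified splitting field condition gives $[D'] \in SBr(k)$ via the inertial-split criterion, then use the Witt/Wadsworth decomposition $D' \sim I' \otimes N'$, apply the residue map to see $[\overline{I'}]$ is split by the residue of the totally ramified extension (which is trivial, as totally ramified means trivial residue extension) hence $I'$ is inertial and split by a totally ramified extension, so trivial by Lemma \ref{lem:totram}; and finally match $e_{D'} = e_{N'}$ with $e_{[N]}$ by a degree count using the two maximal subfields of $N$ as splitting fields of $D'$ and the relation index $\cdot$ (degree of splitting field quotient) bookkeeping. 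Once $D'$ is known NSR with $e_{[D']} = e_{[N]}$, Proposition \ref{prop:nsr} and Lemma \ref{lem:exponent} make the final identification routine.
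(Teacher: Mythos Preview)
Your logical scaffolding has the containment backwards. In fact $gen^s_{spl}([N]) \subseteq gen^s([N])$: if two division algebras share all finite separable splitting fields then they have the same degree (each has a separable maximal subfield splitting the other), and then any separable maximal subfield of one, having the right degree, is a separable maximal subfield of the other. So the chain you need is $\{[N^{\otimes i}]\} \subseteq gen^s_{spl}([N]) \subseteq gen^s([N])$, and the inclusion you must actually prove is the hardest one, $gen^s([N]) \subseteq \{[N^{\otimes i}]\}$, starting only from the hypothesis of shared \emph{maximal subfields}, not shared splitting fields. Your argument for the upper bound is written for $gen^s_{spl}$ and, as stated, does not close the loop.

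More seriously, the decomposition route has gaps. From $F$ splitting $D' \sim I' \otimes_k N'$ you cannot conclude that $F$ splits $I'$ without first knowing $F$ splits $N'$, and there is no reason the totally ramified maximal subfield of $N$ should split an unrelated NSR algebra $N'$; so your ``$\overline{I'}$ is split by $\overline{F}=\overline{k}$'' step is unjustified. Likewise the closing ``cyclicity argument'' is too vague: two NSR algebras with the same ramification index need not be powers of one another. The paper avoids all of this by invoking \cite[Lemma 2.5]{rapinchuk_genus_unramified} directly: for $[M]\in gen^s([N])$ the ramification characters satisfy $\chi_M = i\,\chi_N$ with $(i,e_{[N]})=1$, so $[M\otimes_k N^{(\otimes i)\mathrm{op}}]$ is \emph{unramified}; since $N$ (being NSR) contains a totally ramified \emph{separable} maximal subfield $L$---a nontrivial fact proved in Appendix~\ref{appendix:totram}, not the automatic statement you claim---and $L$ is then also a maximal subfield of $M$, the class $[M\otimes_k N^{(\otimes i)\mathrm{op}}]$ is both unramified and split by a totally ramified extension, hence trivial by Lemma~\ref{lem:totram}. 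That one-line reduction replaces your entire decomposition/bookkeeping step.
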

\begin{proof}
By Lemma \ref{lem:exponent}, if $ (i, e_{[N]}) = 1  $, $[N^{\otimes i}]$ and $[N]$ have same splitting fields (and hence same subfields) since they generate the same subgroup of the Brauer group.     Moreover,  $gen^s_{spl}([N]) \subseteq gen^s([N])$. So it suffices to show that if $ [M] \in gen^s([N])$  then $[M] = [N^{\otimes i}]$ for some $i$ such that $ (i, e_{[N]}) = 1$. Let $M \in gen^s([N])$. Then by \cite[Lemma 2.5]{rapinchuk_genus_unramified}, we have $[M\otimes_k N^{(\otimes i)op}]$ is unramified for some $i$ where $ (i, e_{[N]}) = 1$.  By \cite[Theorem 2.4]{karim_nsr}, $N$ is a NSR division algebra and therefore, $N$ is contains a totally ramified separable maximal subfield $L$ by Appendix \ref{appendix:totram}.  Since $[M] \in gen^s([N]) $, $[M\otimes_k N^{(\otimes i)op}]$ is split by $L$. Now we use Lemma \ref{lem:totram} to conclude that $[M] = [N^{\otimes i}]$.
\end{proof}

\begin{lemma} \label{lem:imp2}
    Suppose $D$ is  a tame division algebra over $k$. If $[D'] \in gen^s([D])$ (resp.$[D'] \in gen^s_{spl}([D])$), then $[\overline{D}] \in gen^s([\overline{D'}])$ (resp. $[\overline{D}] \in gen^s_{spl}([\overline{D'}])$).
\end{lemma}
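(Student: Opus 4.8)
The plan is to handle the two assertions separately: the $gen^s$ part follows almost immediately from Lemma~\ref{lem:imp}, and the $gen^s_{spl}$ part is proved by the same kind of argument with ``separable splitting field'' in place of ``separable maximal subfield''.

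For the ordinary genus: since $k$ is already complete, Lemma~\ref{lem:imp} applies with the valued field ``$K$'' of that lemma taken to be $k$ itself and $v$ its given valuation; then $k_v=k$, $\overline{k_v}=\overline{k}$, $D_v=D$, $D'_v=D'$, and the conclusion of the lemma reads $[\overline{D'}]\in gen^s([\overline D])$. Because ``$X$ and $Y$ share the same separable maximal subfields'' is a symmetric relation, this is the same statement as $[\overline D]\in gen^s([\overline{D'}])$. (Indeed the proof of Lemma~\ref{lem:imp} is symmetric in $D$ and $D'$, so nothing is really being hidden here.)

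For the splitting genus the key step I would isolate is a correspondence between inertial splitting fields of $D$ and splitting fields of $\overline D$: if $M/k$ is inertial with residue $\overline M$, then $M$ splits $D$ iff $\overline M$ admits a copy of $Z(\overline D)$ and, viewed as an extension of $Z(\overline D)$, splits the central division algebra $\overline D$. To prove this I would use the decomposition $D\sim I\otimes_k N$ with $I$ inertial and $N$ NSR (\cite[Lemma 5.14, Theorem 5.15]{wadsworth_henselian}), so that $Z(\overline D)=\overline N$ and $\overline D\simeq\overline I\otimes_{\overline k}\overline N$. Since $M/k$ is unramified with $\overline M\supseteq\overline N$, the ramification character of $N$ restricts to zero over $\overline M$, so $N\otimes_k M$ is unramified with trivial residue, hence split; meanwhile $I\otimes_k M$ is again inertial with residue the underlying division algebra of $\overline I\otimes_{\overline k}\overline M\simeq\overline D\otimes_{Z(\overline D)}\overline M$. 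Thus $D\otimes_k M\sim I\otimes_k M$ is split precisely when $\overline M$ splits $\overline D$. For the converse, if $M$ splits $D$ then $\chi_{D\otimes_k M}=0$, which (the fixed field of $\ker\chi_D$ being $Z(\overline D)$ by \cite[Theorem 3.5]{wadsworth}) forces $\overline M\supseteq Z(\overline D)$ and reduces us to the previous case. Note that $\overline D$, being a central simple algebra, does have finite \emph{separable} splitting fields over $Z(\overline D)$, and since $Z(\overline D)/\overline k$ is separable ($D$ being tame) any such splitting field is separable over $\overline k$, so it has an inertial lift to $k$.

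With this correspondence the argument parallels Lemma~\ref{lem:imp}. Given $[D']\in gen^s_{spl}([D])$: pick any finite separable splitting field $\widetilde M$ of $\overline D$ over $Z(\overline D)$ and let $M/k$ be its inertial lift; the correspondence gives that $M$ splits $D$, hence (by hypothesis) $M$ splits $D'$, and since $M/k$ is unramified we conclude $D'$ is tame. Using $gen^s_{spl}([D])\subseteq gen^s([D])$ together with \cite[Lemma 2.3, Remark 2.6]{rapinchuk_genus_unramified} we get $Z(\overline D)\simeq Z(\overline{D'})$, so ``$[\overline D]\in gen^s_{spl}([\overline{D'}])$'' is meaningful. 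Then, for every finite separable splitting field $\widetilde M$ of $\overline D$ over $Z(\overline D)$, its inertial lift $M$ splits $D$, hence $D'$, and by the converse half of the correspondence $\widetilde M=\overline M$ splits $\overline{D'}$; exchanging the roles of $D$ and $D'$ gives the reverse inclusion, so $\overline D$ and $\overline{D'}$ have exactly the same finite separable splitting fields. The main obstacle I anticipate is the center bookkeeping in the correspondence — checking that the residue of $D\otimes_k M$ for inertial $M$ with $\overline M\supseteq Z(\overline D)$ is Brauer-equivalent to $\overline D\otimes_{Z(\overline D)}\overline M$, and that the NSR factor $N\otimes_k M$ genuinely splits once $\overline M$ contains the cyclic residue field $\overline N$ — which is all contained in the Henselian structure theory of \cite[\S 5--6]{wadsworth_henselian} and \cite[\S 3]{wadsworth} but must be quoted carefully.
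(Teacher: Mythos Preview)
Your proof is correct and follows essentially the same line as the paper's. For $gen^s$ you both invoke Lemma~\ref{lem:imp} with $K=k$; for $gen^s_{spl}$ you both use the correspondence ``an inertial extension $M/k$ splits $D$ iff $\overline M\supseteq Z(\overline D)$ and $\overline M$ splits $\overline D$'', lift a separable splitting field of $\overline D$ to one of $D$, pass to $D'$ by hypothesis, and descend again, symmetrically.

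The only difference is that you re-derive this inertial-splitting correspondence by hand via the decomposition $D\sim I\otimes_k N$, whereas the paper simply cites it as \cite[Corollary 3.5]{wadsworth_henselian}. Your derivation is fine (the one slightly soft step, that $N\otimes_k M$ is split once $\overline M\supseteq\overline N$, is immediate from the fact that the NSR algebra $N$ contains the inertial lift of $\overline N$ as a maximal subfield, which embeds in $M$), but it is unnecessary: the correspondence is exactly the content of the cited corollary, so invoking the $I\otimes N$ decomposition here adds length without adding information. The paper's version is therefore shorter; yours has the minor virtue of being self-contained.
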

\begin{proof}
    The fact that  $[D'] \in gen^s([D])$ implies  $[\overline{D}] \in gen^s([\overline{D}])$ follows from Lemma \ref{lem:imp}. Now suppose $[D'] \in gen^s_{spl}([D])$.  First note that $Z(\overline{D}) = Z(\overline {D'})$ (\cite[Lemma 2.3 and Remark 2.6]{rapinchuk_genus_unramified}).  Let $\overline{L} \subset \overline{D}$ be a separable finite dimensional splitting field over $Z(\overline{D})$ with inertial lift  $L$. Then  by \cite[Corollary 3.5]{wadsworth_henselian}, $L$ splits $D$ and hence also splits $D'$ by hypothesis. Therefore again by \cite[Corollary 3.5]{wadsworth_henselian}  $\overline{L}$ splits $\overline{D'}$. Since the argument is symmetric with respect ot $D$ and $D'$ we conclude that $[\overline{D}] \in gen^s_{spl}([\overline{D'}])$. 
\end{proof}

\begin{prop}\label{prop:inertial_residue}
Let $I$ be an unramified division algebra over $k$. Then the  isomorphism in (\ref{eqn:unramified_residue})
\begin{align*}
    \phi: IBr(k) &\rightarrow Br(\overline{k}) \\
     [D] &\mapsto [\overline{D}]
\end{align*}

induces an injective maps of sets
\begin{align*}
   gen^s([I]) &\rightarrow gen^s([\overline{I}]) \\
    gen^s_{spl}([I]) &\rightarrow gen^s_{spl}([\overline{I}])
   \end{align*}
 If moreover, $\overline{k}$ is perfect, then the above maps are bijective
\end{prop}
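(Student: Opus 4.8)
The plan is to deduce the statement from the formal properties already established for inertial algebras: the residue isomorphism (\ref{eqn:unramified_residue}), Lemma \ref{lem:imp2}, Lemma \ref{lem:totram}, and the fundamental equality (\ref{eqn:fund_equality}). First I would check that $\phi$ can even be restricted to the genus, i.e. that $gen^s([I])$ and $gen^s_{spl}([I])$ lie inside $IBr(k)$. If $[I']$ is in either set, then it shares the ramification data of $[I]$ --- for $gen^s$ by \cite[Lemma 2.5]{rapinchuk_genus_unramified}, and for $gen^s_{spl}$ via the inclusion $gen^s_{spl}([I])\subseteq gen^s([I])$ noted in the proof of Proposition \ref{prop:genusramified} --- and since $I$ is unramified this forces $[I']$ to be unramified, so $[I']\in IBr(k)$. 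Applying Lemma \ref{lem:imp2} with $D=I$ and $D'=I'$, and using that ``having the same separable maximal subfields'' (resp. splitting fields) is a symmetric relation, one gets $\phi([I'])=[\overline{I'}]\in gen^s([\overline I])$ (resp. $\in gen^s_{spl}([\overline I])$). Hence $\phi$ restricts to the two maps in the statement, and they are injective because $\phi\colon IBr(k)\to Br(\overline k)$ is a group isomorphism.

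The substantive part is surjectivity when $\overline k$ is perfect, and for this I would first establish a description of the separable subfields of an arbitrary unramified division algebra $J$ over $k$ (with $\overline k$ perfect): (i) every maximal subfield of $J$ is unramified over $k$ and arises as the inertial lift of a maximal subfield of $\overline J$, and conversely every such lift is a maximal subfield of $J$; and (ii) a finite separable $L/k$ splits $J$ if and only if its maximal unramified subextension $L_0$ splits $J$, equivalently iff $\overline{L_0}$ splits $\overline J$ (by \cite[Corollary 3.5]{wadsworth_henselian}, since $L_0$ is inertial). Both are proved by the same device: write $k\subseteq L_0\subseteq L$ with $L_0/k$ unramified and $L/L_0$ totally ramified --- perfectness of $\overline k$ makes the residue extension $\overline L/\overline{L_0}$ trivial, and discrete valuations are defectless, so $L/L_0$ really is totally ramified --- and pass to the unramified division algebra $J_0$ underlying $J\otimes_k L_0$. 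If $L$ splits $J$ it splits $J_0$, whence $J_0\cong L_0$ by Lemma \ref{lem:totram}, so $L_0$ already splits $J$; this is (ii), and since a splitting field of $J$ has degree a multiple of $\deg J$ while $[L_0:k]\le[L:k]=\deg J$ in the maximal-subfield case, one gets $L=L_0$, after which $\overline L\subseteq\overline J$ is maximal by (\ref{eqn:fund_equality}); the converse in (i) is \cite[Theorem 2.9, Lemma 5.1(iii)]{wadsworth_henselian}.

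Granting (i)--(ii), surjectivity is immediate: given $\beta\in gen^s([\overline I])$ (resp. $gen^s_{spl}([\overline I])$) with underlying division algebra $\overline{I'}$, let $I'$ be the inertial lift of $\beta$; since $\overline{I'}$ and $\overline I$ share all maximal subfields (resp. finite splitting fields), (i)--(ii) show that $I'$ and $I$ share all separable maximal subfields (resp. separable splitting fields), so $[I']\in gen^s([I])$ (resp. $gen^s_{spl}([I])$) and $\phi([I'])=\beta$. I expect the main obstacle to be precisely steps (i)--(ii): ruling out ramified maximal subfields of an unramified algebra and showing that ``splitting'' is detected on the unramified part. This is where Lemma \ref{lem:totram}, the fundamental equality, and the perfectness of $\overline k$ are all genuinely used; without perfectness the residue extension $\overline L/\overline{L_0}$ need not be trivial and this argument breaks, which is why only injectivity survives in general.
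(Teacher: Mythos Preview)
Your proposal is correct and follows essentially the same route as the paper: Lemma~\ref{lem:imp2} for well-definedness and injectivity, and the inertial-lift correspondence together with \cite[Theorem~2.9, Corollary~3.5]{wadsworth_henselian} for surjectivity when $\overline{k}$ is perfect; your step~(ii) is exactly the paper's Lemma~\ref{lem:maxunramified}. The only minor difference is that you deduce ``maximal subfields of an unramified $J$ are unramified'' via Lemma~\ref{lem:totram} and a degree count, whereas this follows more directly from $\Gamma_L\subseteq\Gamma_J=\Gamma_k$ and the fundamental equality; either way the argument goes through.
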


\begin{proof}
The first claim follows from  Lemma \ref{lem:imp2}. Now assume that $\overline{k}$ is perfect or $char ~\overline{k}$ is coprime to the degree of $I$. It remains to show that if $[\overline{J}] \in gen^s([\overline{I}])$ (resp. $[\overline{J}] \in gen^s_{spl}([\overline{I}])$), then $[J] \in gen^s([I])$ (resp. $[J] \in gen^s_{spl}([I])$) where $J$ is the unique inertial lift of $\overline{J}$ (\S\ref{sec:cdvf}). \\
\indent Let $L \subset I$ be a    separable subfield. Clearly, by the assumption on $\overline{k}$, $\overline{L} \subseteq \overline{I}$ is a   separable subfield and therefore is also a    separable subfield of $\overline{J}$. By \cite[Theorem 2.9]{wadsworth_henselian}, we  conclude that $L \subset J$ is a separable subfield. Since the above arguments are symmetric with respect to $I$ and $J$, we have $J \in gen^s([I])$. \\
\indent The proof of $[J] \in gen^s_{spl}([I])$ follows by similar argument as above by replacing the phrase "separable subfield" with "separable finite dimensional splitting field" together with   Lemma \ref{lem:maxunramified} and  \cite[Corollary 3.5]{wadsworth_henselian}.  We leave the details to the reader.
\end{proof}

We will now describe the genus of division algebras of prime degree in terms of its residue algebra and ramification.
\begin{thm} \label{thm:sub_genus}
Let  $[D] \in SBr(k)$  be of prime index $p$. Then 
\begin{align*}
gen^s([D])\begin{cases}
\subseteq  \{ [C] \in IBr(k) | [\overline{C}] \in gen^s([\overline{D})]\} \text{~~if $[D]$ is unramified~~}\\
 =\{[D^{\otimes i}]| 1 \leq i \leq p-1\} \text{~~else}
\end{cases}
\end{align*}
In particular, 
\begin{align*}
|gen^s([D])| \begin{cases}
\leq|gen^s(\overline{D})|\text{~~if $[D]$ is unramified~~}\\
= p-1\text{~~else}
\end{cases}
\end{align*}
If $\overline{k}$ is perfect, then $\subseteq$  and $\leq$  in the above  expressions are equalities. Moreover,  all the above statements also hold if $gen^s([D])$ is replaced with $gen^s_{spl}([D])$.
\end{thm}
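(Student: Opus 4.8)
The plan is to split on the ramification index $e_D$. Since $D$ is a division algebra of index $p$ we have $[D:k]=p^2$, and the inclusion $\Gamma_k\subseteq\Gamma_D\subseteq\frac1p\Gamma_k$ forces $\Gamma_D/\Gamma_k$ to embed into $\frac1p\mathbb{Z}/\mathbb{Z}$, so $e_D\in\{1,p\}$; these are exactly the two branches of the statement. I would also record at the outset that degree is a genus invariant: $D$ has a separable maximal subfield (every central division algebra does), and this is then a separable maximal subfield of any member of $gen^s([D])$, so every element of $gen^s([D])$ is the class of a division algebra of degree $p$.

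For the ramified branch $e_D=p$, the first step is to recognize $D$ as NSR. The fundamental equality \eqref{eqn:fund_equality} gives $[\overline D:\overline k]=p$; since $\overline D$ is a division algebra over its center, $[\overline D:Z(\overline D)]$ is a square dividing the prime $p$, hence equals $1$, so $\overline D$ is a degree-$p$ field over $\overline k$. Feeding this into the Wadsworth decomposition $D\sim I\otimes_k N$ (with $I$ inertial, $N$ NSR, $\overline D\simeq\overline I\otimes_{\overline k}\overline N$, $e_N=e_D$): the field $\overline N$ has degree $e_N=p=[\overline D:\overline k]$, so a dimension count forces $\overline I\simeq\overline k$, i.e.\ $I$ split, and comparing degrees gives $D\cong N$ NSR. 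Then Lemma~\ref{lem:exponent} yields $\mathrm{ord}([D])=e_D=p$, and Proposition~\ref{prop:genusramified} (applicable because the totally ramified maximal subfield of the NSR algebra $D$ splits it) gives $gen^s_{spl}([D])=gen^s([D])=\{[D^{\otimes i}]\mid (i,p)=1\}$, which, since $[D]$ has order $p$, is precisely $\{[D],\dots,[D^{\otimes(p-1)}]\}$, of size $p-1$. This branch uses no hypothesis on $\overline k$.

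For the unramified branch $e_D=1$, the crux is to show $gen^s([D])\subseteq IBr(k)$. Given $[D']\in gen^s([D])$: the inertial algebra $D$ contains the inertial lift $L$ of a separable maximal subfield of the central division $\overline k$-algebra $\overline D$, and $L$ is a separable maximal subfield of $D$ (it is unramified of degree $p=\deg D$ and embeds into $D$ by \cite[Theorem~2.9]{wadsworth_henselian}); hence $L$ is also a separable maximal subfield of $D'$, so $D'$ is split by the unramified extension $L/k$, i.e.\ $D'$ is inertially split, i.e.\ tame. Then \cite[Lemma~2.5]{rapinchuk_genus_unramified} (ramification is preserved within the genus of a tame algebra) shows $D'$ is unramified, so $[D']\in IBr(k)$. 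Thus $gen^s([D])$ lies in the domain of the residue isomorphism $\phi\colon IBr(k)\to Br(\overline k)$, and Proposition~\ref{prop:inertial_residue} furnishes the injection $gen^s([D])\hookrightarrow gen^s([\overline D])$, giving $[\overline{D'}]\in gen^s([\overline D])$ and $|gen^s([D])|\le|gen^s([\overline D])|$. When $\overline k$ is perfect, Proposition~\ref{prop:inertial_residue} makes this map bijective, and since $\phi$ identifies $\{[C]\in IBr(k)\mid[\overline C]\in gen^s([\overline D])\}$ with $gen^s([\overline D])$ by uniqueness of inertial lifts, the containment becomes an equality. The $gen^s_{spl}$ statement is obtained the same way: in prime degree the index equals the least degree of a separable splitting field, so members of $gen^s_{spl}([D])$ still have degree $p$ and admit $L$ as a separable splitting field, hence are tame and unramified, and one invokes the $gen^s_{spl}$ half of Proposition~\ref{prop:inertial_residue} (in the ramified branch the $gen^s_{spl}$ equality is already part of Proposition~\ref{prop:genusramified}).

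The main obstacle is precisely the reduction carried out in the unramified branch — showing that membership in the genus (or splitting genus) of an inertial division algebra of prime degree forces the companion algebra to remain tame and unramified, so that Proposition~\ref{prop:inertial_residue} applies. This rests on the existence of a separable maximal subfield in $\overline D$ and on the invariance of the residue data across the genus; once these are granted, both branches are bookkeeping over Propositions~\ref{prop:inertial_residue} and~\ref{prop:genusramified}.
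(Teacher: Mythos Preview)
Your proof is correct and follows the same strategy as the paper: split on $e_D\in\{1,p\}$ and invoke Propositions~\ref{prop:inertial_residue} and~\ref{prop:genusramified} respectively, with the perfect-residue and $gen^s_{spl}$ refinements coming along for free. The only cosmetic difference is in the ramified branch, where the paper produces a totally ramified maximal subfield directly (take $a\in D$ whose valuation generates $\Gamma_D/\Gamma_k$, then $k(a)$ is such a subfield) rather than going through the Wadsworth decomposition to identify $D$ as NSR; both routes feed into Proposition~\ref{prop:genusramified} the same way.
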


\begin{proof}
By the fundamental equality (\ref{eqn:fund_equality}), we have that $e_D=[\Gamma_D: \Gamma_k]$ is either $1$ or $p$ since there are no tame  and totally ramified division algebras over complete discrete valued fields (\cite[Remark 3.2(a)]{tignol_wadsworth_ramified}). If $e_{[D]}= 1$, $[D]$ is unramified. Therefore, $gen^s([D]) \subseteq \{ C | \overline{C} \in gen^s(\overline{D})\}$  and $gen^s_{spl}([D]) \subseteq \{ C | \overline{C} \in gen^s_{spl}(\overline{D})\}$)  (with $\subseteq$ replaced with $=$ if $\overline{k}$ is perfect) by Proposition \ref{prop:inertial_residue}. If $e_{[D]} = p$, then $D$ contains an element $a$ such that  whose valuation generates $\Gamma_D/ \Gamma_k$ implying that $D$ contains a  totally ramified maximal subfield. Therefore by Proposition \ref{prop:genusramified},  $ gen^s_{spl}([D]) = gen^s([D]) =  \{[D^{\otimes i}]| 1 \leq i \leq p-1\}$.
\end{proof}

\noindent The case $p=2$ yields:
\begin{cor}\label{cor:sub_genus_up}
Let $\overline{k}$ satisfy the property that  the genus (resp. splitting genus) is trivial for any quaternion algebra over $\overline{k}$. Then the genus (resp. splitting genus) of any tame quaternion algebra over $k$ is trivial.
\end{cor}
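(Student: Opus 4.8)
The plan is to read this off directly from Theorem~\ref{thm:sub_genus} specialized to prime index $p = 2$, together with the identification \eqref{eqn:unramified_residue} of $IBr(k)$ with $Br(\overline{k})$. Let $Q$ be a quaternion algebra over $k$ that is tame with respect to the valuation of $k$. If $Q$ is split, then $gen^s([Q])$ and $gen^s_{spl}([Q])$ are trivially singletons, so I may assume $Q$ is a quaternion division algebra; then $[Q] \in SBr(k)$ has prime index $p = 2$ and Theorem~\ref{thm:sub_genus} applies.

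I would first dispose of the ramified case. If $[Q]$ is not unramified, then by the fundamental equality~\eqref{eqn:fund_equality} and the nonexistence of tame totally ramified division algebras over a complete discrete valued field (as used in the proof of Theorem~\ref{thm:sub_genus}) one has $e_{[Q]} = 2$, and the second clause of Theorem~\ref{thm:sub_genus} gives $gen^s([Q]) = gen^s_{spl}([Q]) = \{[Q^{\otimes i}] \mid 1 \le i \le p - 1\} = \{[Q]\}$, so both genera are trivial.

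It remains to treat the unramified case. Here $Q$ is inertial, so $Z(\overline{Q}) = \overline{k}$ and, again by the fundamental equality, $[\overline{Q} : \overline{k}] = [Q : k] = 4$; since $\overline{Q}$ is a division algebra over its center, this means $\overline{Q}$ is a quaternion division algebra over $\overline{k}$, so the hypothesis on $\overline{k}$ applies and gives $gen^s([\overline{Q}]) = \{[\overline{Q}]\}$ (resp. $gen^s_{spl}([\overline{Q}]) = \{[\overline{Q}]\}$). The first clause of Theorem~\ref{thm:sub_genus} then yields $gen^s([Q]) \subseteq \{[C] \in IBr(k) \mid [\overline{C}] \in gen^s([\overline{Q}])\} = \{[C] \in IBr(k) \mid [\overline{C}] = [\overline{Q}]\}$, and since the residue map $\phi \colon IBr(k) \to Br(\overline{k})$ of~\eqref{eqn:unramified_residue} (equivalently Proposition~\ref{prop:inertial_residue}) is injective, the right-hand side is exactly $\{[Q]\}$. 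The same reasoning with $gen^s$ replaced by $gen^s_{spl}$ throughout, using the corresponding clauses of Theorem~\ref{thm:sub_genus}, gives triviality of the splitting genus.

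Since Theorem~\ref{thm:sub_genus} already packages both the ramified and unramified cases, there is essentially no obstacle here: the only points that deserve a moment's care are checking that $\overline{Q}$ is genuinely a quaternion algebra over $\overline{k}$ in the unramified case (so that the hypothesis is applicable) and observing that injectivity of the residue isomorphism on $IBr(k)$ collapses the inclusion of Theorem~\ref{thm:sub_genus} to the single class $[Q]$. Thus the corollary is a direct specialization of Theorem~\ref{thm:sub_genus} to $p = 2$.
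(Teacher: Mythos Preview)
Your proposal is correct and is exactly the paper's approach: the paper states this corollary as the immediate $p=2$ specialization of Theorem~\ref{thm:sub_genus} without further comment. Your write-up simply unpacks the two cases (ramified and unramified) of that theorem and uses the injectivity of the residue isomorphism~\eqref{eqn:unramified_residue} to collapse the unramified inclusion to a singleton, which is precisely what the theorem's inequality $|gen^s([D])| \leq |gen^s([\overline{D}])|$ already encodes.
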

\begin{remark}\normalfont
    Quaternion algebras sharing same separable maximal subfields are said to be \emph{totally separably linked} in \cite{quaternion_linkage}. It is shown in \cite[Corollay 6.2, Corollary 6.9]{quaternion_linkage} that  if $\overline{k}$ is perfect of characteristic $2$, then the genus of any  quaternion algebra over  $k=\overline{k}(\!(t)\!)$ is trivial. This is a special case of Corollary \ref{cor:sub_genus_up}, since  in this case ${}_2Br(\overline{k})$ is trivial (\cite[Chapter I, Theorem 1.3.7]{thelene_brauer}) and  any quaternion algebra over $k$ is tame (\cite[Theorem 1, Exercise 3, Chapter XII]{serre_local}). 
\end{remark}

One can recursively use   Corollary \ref{thm:sub_genus} to compute  genus of degree $p$ division algebras over fields of the form $k(\!(t_1)\!)(\!(t_2)\!)\cdots (\!(t_n)\!)$. 

\begin{example}\normalfont
Let $K = k(\!(t_1)\!)(t_2)\!)\cdots(\!(t_n)\!)$
where $k$ is a  field of characteristic $\neq p$. Let $D$ be a tame division algebra over $K$ be of index $p$.  Set $D^{(0)}:  = D$ and let $D^{(k)}$ denote the residue algebra of $D^{(k-1)}$, so that $D^{(n)}$ is a division algebra over $k$. Then by recursive application of  Corollary \ref{thm:sub_genus}, we get \begin{align*}
|gen^s([D])|   \leq \begin{cases}  
p-1 \text{~~if $D^{(n)}$ is a field~~}\\
 |gen^s([D]^{(n)})| \text{~~else}
\end{cases}
\end{align*}
\end{example}
\begin{example}
In the above example, taking $p=2$ and using the fact that genus of any tame quaternion over global field is trivial (\cite[\S 3.6]{rapinchuk_genus_unramified}), we see that the genus of  any quaternion algebra over $K = \mathbb{Q}(\!(t_1)\!)(\!(t_2)\!)\cdots(\!(t_n)\!)$ is trivial.
\end{example}

\subsection{Genus decomposition for $gen^s_{spl}$} 
We will now derive a formula for $gen^s_{spl}([D])$ for any $[D] \in SBr(k)$. Recall from  \S\ref{sec:cdvf} that we have  decomposition 
\begin{align*}
    D \sim I \otimes_k N
\end{align*}
in $SBr(k)$ where $I$ is inertial  and $N$ is  NSR over $k$ and $e_D = e_N$. 
\begin{lemma}
In the above decomposition, if $[D] \in {}_nSBr(K)$, so are $[I]$ and $[N]$
\end{lemma}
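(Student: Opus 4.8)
The plan is to work additively in $Br(k)$, where the decomposition $D \sim I \otimes_k N$ reads $[D] = [I] + [N]$, and to exploit the rigidity statement of Lemma~\ref{lem:totram}: an inertial class that is split by a totally ramified extension is trivial.

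First I would record two structural inputs. On one hand, since $I$ is inertial, $[I] \in IBr(k)$; as the inertial classes form a subgroup of $Br(k)$ (identified with $Br(\overline{k})$ via (\ref{eqn:unramified_residue})), every multiple $n[I]$ is again inertial, i.e.\ the division algebra underlying $n[I]$ is unramified. On the other hand, $N$ is NSR, so it contains a maximal subfield $L$ that is totally ramified over $k$; being a maximal subfield of the division algebra $N$, the field $L$ is a finite extension of $k$ which splits $N$, and hence also splits $N^{\mathrm{op}}$.

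Now I would combine the two. From the hypothesis $n[D] = 0$ we get $n[I] = -n[N] = n[N^{\mathrm{op}}]$ in $Br(k)$, and since $L$ splits $N^{\mathrm{op}}$ it splits $n[N^{\mathrm{op}}]$, hence it splits $n[I]$. Thus $n[I]$ is an inertial class split by a finite totally ramified extension, so Lemma~\ref{lem:totram} forces $n[I] = 0$. Then $n[N] = n[D] - n[I] = 0$ as well. Finally, $I$ and $N$ are both tame (an unramified algebra is tame, and an NSR algebra is tame since it contains an unramified maximal subfield), so $[I],[N] \in SBr(k)$, and therefore $[I],[N] \in {}_nSBr(k)$, as claimed.

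There is no serious obstacle here; the only point requiring a little care is that Lemma~\ref{lem:totram} concerns \emph{finite} totally ramified extensions, which is why I insist on taking the totally ramified splitting field of $N$ to be a maximal subfield (automatically of finite degree $\sqrt{[N:k]}$). An alternative, slightly less self-contained, route would be to observe that $ord([N]) = e_{[N]} = e_D$ by Lemma~\ref{lem:exponent} together with the decomposition theorem, and then use $e_D \mid ord([D])$ to get $n[N]=0$ directly, followed by $n[I] = n[D]-n[N] = 0$; but the argument via Lemma~\ref{lem:totram} uses only results already established in the excerpt.
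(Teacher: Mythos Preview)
Your argument is correct. Your primary route---showing $n[I]$ is inertial and then killing it via Lemma~\ref{lem:totram} using the totally ramified maximal subfield of $N$---is a clean variant that bypasses Lemma~\ref{lem:exponent} entirely. The paper instead takes precisely the ``alternative'' you sketch at the end: it uses $e_D \mid \mathrm{ord}([D]) \mid n$ together with $\mathrm{ord}([N]) = e_N = e_D$ (from Lemma~\ref{lem:exponent} and the decomposition theorem) to conclude $n[N]=0$ first, and then $n[I]=0$ follows. Both arguments ultimately rest on Lemma~\ref{lem:totram} (the paper's via Lemma~\ref{lem:exponent}, yours directly), so the difference is purely one of packaging; your version has the minor advantage of not invoking the extra citation for $e_D \mid \mathrm{ord}([D])$.
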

\begin{proof}
    It  suffices to show that $[N^{\otimes n}]$ is trivial. Now  $e_D| (ord[D])$ (\cite[Corollary 6.10]{wadsworth_henselian}) and therefore $e_D|n$. By  Lemma \ref{lem:exponent} $e_D =e_N = ord([N])$ and the result follows .
\end{proof}

\begin{thm} \label{thm:genus_cdvf} 
Let   $I$ be unramified and $N$ be  NSR division algebras over $k$.  Then 
\begin{align*}
    gen^s_{spl}([I \otimes_k N]) \subseteq \{ [I' \otimes_k N']~ |~ [I']  \in gen^s([I]) \text{~~  and~~} [N'] \in gen^s([N])\}
\end{align*}
\end{thm}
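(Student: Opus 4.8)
The plan is to take an arbitrary $[D'] \in gen^s_{spl}([I\otimes_k N])$ and exhibit a decomposition $D' \sim I'\otimes_k N'$ with $I'$ inertial, $N'$ NSR, $[I']\in gen^s([I])$, and $[N']\in gen^s([N])$. First I would record two easy reductions. Since every separable maximal subfield of a division algebra is in particular a separable splitting field, and conversely every central simple algebra has a separable splitting field of degree equal to its index, one has $gen^s_{spl}([A]) \subseteq gen^s([A])$ for every $[A]$; in particular $[D'] \in gen^s([D])$ with $D := I\otimes_k N$, so Lemmas \ref{lem:imp}, \ref{lem:imp2} and \cite[Lemma 2.5]{rapinchuk_genus_unramified} apply to the pair $(D,D')$. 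Also $D$ is tame, hence split by an unramified and therefore separable extension of $k$, which then splits $D'$; thus $[D'] \in SBr(k)$, and I fix a decomposition $D' \sim I''\otimes_k N''$ with $I''$ inertial and $N''$ NSR (\cite[Theorem 5.15]{wadsworth_henselian}).

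Next I would pin down the ramification data. By \cite[Lemma 2.5]{rapinchuk_genus_unramified} the residue characters satisfy $\chi_{D'} = j\,\chi_D$ for some $j$ prime to $\mathrm{ord}(\chi_D)$; since $[A]\mapsto\chi_A$ is a homomorphism on $SBr(k)$ killing inertial classes, $\chi_D = \chi_N$ and $\chi_{D'} = \chi_{N''}$, and $\mathrm{ord}(\chi_N) = e_N$ because the image of $\chi_N$ is $\Gamma_N/\Gamma_k$ (so $j$ is prime to $e_N$). Hence $N''\otimes_k (N^{op})^{\otimes j}$ has trivial residue character, i.e. is inertial; absorbing it into the inertial factor, I may rewrite $D' \sim I'\otimes_k N'$ with $N' := N^{\otimes j}$ and $I'$ the (inertial) underlying division algebra of $I''\otimes_k\bigl(N''\otimes_k(N^{op})^{\otimes j}\bigr)$. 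Now $N^{\otimes j}$ is tame and is split by the totally ramified maximal subfield of $N$, hence is NSR by \cite[Theorem 2.4]{karim_nsr}, and $[N'] = [N^{\otimes j}] \in gen^s([N])$ by Proposition \ref{prop:genusramified} since $(j,e_N)=1$. This disposes of the ramified factor.

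It remains to show $[I']\in gen^s([I])$, i.e. that $I$ and $I'$ have the same separable maximal subfields; I expect this to be the main obstacle, since passing to residues does not suffice — the map $gen^s([I])\to gen^s([\overline I])$ need not be surjective when $\overline k$ is imperfect (Proposition \ref{prop:inertial_residue}). The plan is a direct ``compositum'' argument. Since $D$ is tame it has a separable maximal subfield, which $D'$ shares, so $\deg D = \deg D'$; together with the fundamental equality and the relations $[D:k]=[I:k][N:k]$, $[D':k]=[I':k][N':k]$, $\deg N = e_N = e_{N'} = \deg N'$ coming from the decomposition theorem, this forces $\deg I = \deg I'$. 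Given a separable maximal subfield $\tilde L\subset I$, note $\tilde L/k$ has $e=1$ because $\Gamma_{\tilde L}\subseteq\Gamma_I=\Gamma_k$. Let $L_{tr}$ be a totally ramified separable maximal subfield of $N$ (Appendix \ref{appendix:totram}) and set $E := \tilde L\cdot L_{tr}$: as $\tilde L$ is unramified and $L_{tr}$ totally ramified over $k$ they are linearly disjoint, $E/k$ is separable, $E\supseteq\tilde L$ splits $I$, and $E\supseteq L_{tr}$ splits $N$, so $E$ splits $D$. Since $[D']\in gen^s_{spl}([D])$, $E$ splits $D'$; and $E\supseteq L_{tr}$ splits $N^{\otimes j}=N'$, so $E$ splits $I'$. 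But $E/\tilde L$ is totally ramified while $I'\otimes_k\tilde L$ is inertial over $\tilde L$, so Lemma \ref{lem:totram} forces $I'\otimes_k\tilde L$ to be split, i.e. $\tilde L$ splits $I'$; as $[\tilde L:k]=\deg I=\deg I'$ and $\tilde L/k$ is separable, $\tilde L$ is isomorphic to a separable maximal subfield of $I'$. The argument is symmetric in $(I,N)$ and $(I',N')$ — using that $N'=N^{\otimes j}$ also has a totally ramified maximal subfield and that anything splitting $N^{\otimes j}$ splits $N$ — so $I$ and $I'$ have the same separable maximal subfields and $[I']\in gen^s([I])$, which completes the proof. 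The one technical point requiring care is that $I'\otimes_k\tilde L$ is indeed inertial over $\tilde L$ (so that Lemma \ref{lem:totram} applies); this rests on $\tilde L/k$ being itself inertial — a separable maximal subfield of an inertial division algebra over a complete discretely valued field is inertial — together with stability of inertial algebras under inertial base change.
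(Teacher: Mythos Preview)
Your proof is correct and follows essentially the same route as the paper: align the NSR factors via the residue characters (\cite[Lemma 2.5]{rapinchuk_genus_unramified}) to reduce to $N' = N^{\otimes j}$, then run the compositum argument with a separable maximal subfield of $I$ and a totally ramified separable maximal subfield of $N$ (Appendix \ref{appendix:totram}), invoking Lemma \ref{lem:totram} to force the inertial parts to share maximal subfields. The only cosmetic difference is that the paper deduces $\deg I = \deg I'$ a posteriori from the two-sided splitting inequality rather than from the dimension formula $[D:k]=[I:k][N:k]$ for an inertial--NSR decomposition.
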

\begin{proof}
Let $D_1 \sim I_1 \otimes_k N_1$ and $D_2 \sim I_2' \otimes_k N_2$ where $[D_2] \in gen^s_{spl}([D_1])$. Since $I_1$ and $I_2'$ are unramified,  for $i=1,2$, we have $\chi_{D_i} = \chi_{N_i}$ (see \S\ref{sec:unramified}). As $D_1$ and $D_2$ have same splitting fields (and hence same subfields), by \cite[Lemma 2.5]{rapinchuk_genus_unramified}, we have $ker(\chi_{D_1}) = ker(\chi_{D_2})$. Therefore $e_{D_1}=e_{D_2}$ and   $\chi_{N_2} = \chi_{N_1^{\otimes j}}$ for some $j$ where $(j, e_{N_1}) =1$ (here we use the fact that $e_{N_i}=e_{D_i}$).  Hence $N_2 \sim N_1^{\otimes j} \otimes_k C$ where $[C] \in IBr(k)$. By replacing $I_2'$ with $I_2:=I_2'\otimes_k C$, we  get $D_2 \sim I_2 \otimes_k N_1^{\otimes j}$. Hence by Proposition \ref{prop:genusramified}, we have $D_2 \sim I_2\otimes N_2$ where $[N_2] \in gen^s([N_1])$. It remains to show that $[I_2] \in gen^s([I_1])$.\\
\indent Since $N_1$ is  NSR, it contains a totally ramified maximal subfield. By Appendix \ref{appendix:totram}, $N_1$  contains a totally ramified separable maximal subfield $L/k$. Therefore  $L$ is also a maximal subfield of  $N_2$.   So we have $D_i \otimes_k L \sim I_i \otimes_k L$. Now let $F/k$ be a finite separable maximal subfield of  $I_1$.  Note that since $I_1$ is unramified, $[\Gamma_F: \Gamma_k] =1$ (although $\overline{F}/\overline{k}$ need not be separable). By the proof of  \cite[Lemma 2.5.8]{field_arithmetic_book},  $F$ and $L$ are linearly disjoint (as subfields of a fixed algebraic closure of $k$). Therefore $F L \simeq F \otimes_k L$ is separable and  splits $D_1$.  Therefore it also  splits $D_2$.  Therefore 
\begin{align*}
    D_2 \otimes_k FL \sim  I_2 \otimes_k FL \sim (I_2 \otimes_k F) \otimes_F FL
\end{align*}
is split. But $FL/F$ is totally ramified. So $I_2 \otimes_k F$ is split by Lemma \ref{lem:totram}. In particular, $deg(I_2) \leq deg(I_1)$. Reversing the role of $I_1$ and $I_2$ we get $deg(I_1) \leq deg(I_2)$. Therefore $deg(I_1) = deg(I_2)$ and  $F$ is a maximal subfield of $I_2$. Since the argument is symmetric with respect to $I_1$ and  $I_2$, we get $I_2 \in gen^s([I_1])$.
\end{proof}
Now assume that $\overline{k}$ is perfect. Then we get a better estimate of $gen^s_{spl}$ as we show below.   For a finite separable extension $F/k$, denote by $\tilde{F}$ the maximal unramified subfield of $F/k$. We start with a lemma.

\begin{lemma}\label{lem:maxunramified}
Let $[I]\in IBr(k)$ and let $F/k$ be a finite separable extension.  Assume that $\overline{k}$ is perfect. Then $F$ splits $I$ if and only if $\tilde{F}$ splits $I$.
\end{lemma}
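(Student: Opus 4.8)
The plan is to prove the two implications separately, the forward direction being the substantive one. Suppose first that $\tilde{F}$ splits $I$. Since $\tilde{F} \subseteq F$, it is immediate that $F$ splits $I$ as well; this direction requires nothing. For the converse, assume $F$ splits $I$. Write $[I] \in IBr(k)$ and pass, via the isomorphism $\phi : IBr(k) \to Br(\overline{k})$ of (\ref{eqn:unramified_residue}), to the residue algebra $\overline{I}$ over $\overline{k}$. Because $\overline{k}$ is perfect, the residue extension $\overline{F}/\overline{k}$ is automatically separable, and $F/\tilde{F}$ is totally ramified with $\overline{F} = \overline{\tilde{F}}$.

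First I would reduce splitting over $F$ to splitting over $\tilde{F}$ by factoring $F/k$ through the maximal unramified subextension: $k \subseteq \tilde{F} \subseteq F$ with $F/\tilde{F}$ totally ramified. Since $F$ splits $I$, the algebra $I \otimes_k \tilde{F}$ — which is again an unramified (inertial) division algebra over the complete discrete valued field $\tilde{F}$ by \cite[Theorem 2.9]{wadsworth_henselian} together with the behaviour of inertial classes under unramified base change — is split by the totally ramified extension $F/\tilde{F}$. Now I would invoke Lemma \ref{lem:totram}: an unramified division algebra over a complete discrete valued field that is split by a totally ramified extension is trivial. Hence $I \otimes_k \tilde{F}$ is split, i.e.\ $\tilde{F}$ splits $I$, as required.

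The only point that needs genuine care — and hence the main (minor) obstacle — is verifying that $I \otimes_k \tilde{F}$ is still unramified over $\tilde{F}$ so that Lemma \ref{lem:totram} applies, and that $F/\tilde{F}$ is genuinely totally ramified in the sense needed; the perfectness of $\overline{k}$ is exactly what guarantees $\overline{F}/\overline{k}$ is separable, so that $\tilde{F}$ is well-defined as the maximal unramified subextension and $[\tilde{F}:k] = [\overline{F}:\overline{k}]$, forcing $[F:\tilde{F}] = e_{F/k}$, i.e.\ $F/\tilde{F}$ totally ramified. Base change of an inertial class along the unramified extension $\tilde{F}/k$ stays inertial (the inertial lift construction commutes with unramified base change, \cite[\S6]{wadsworth_henselian}), so $I \otimes_k \tilde{F} \in IBr(\tilde{F})$, and Lemma \ref{lem:totram} closes the argument. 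I would keep the writeup short, citing \cite{wadsworth_henselian} for the stability of inertial classes under unramified base change and Lemma \ref{lem:totram} for the splitting dichotomy.
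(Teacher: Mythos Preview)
Your proposal is correct and follows essentially the same argument as the paper: factor $F/k$ through the maximal unramified subextension $\tilde{F}$, observe that $[I \otimes_k \tilde{F}] \in IBr(\tilde{F})$ and that $F/\tilde{F}$ is totally ramified (using perfectness of $\overline{k}$), and then apply Lemma~\ref{lem:totram}. The paper's version is terser and simply asserts $[I \otimes_k \tilde{F}] \in IBr(\tilde{F})$ without further justification, but the structure is identical.
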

\begin{proof}
Suppose $F$ splits $I$. Note that $[I \otimes_k \tilde{F}] \in IBr(\tilde{F})$. Now
\begin{align*}
    I \otimes_k F &\simeq (I \otimes_k \tilde{F}) \otimes_{\tilde{F}} F
\end{align*}
Since  $\overline{k}$ is perfect, $\overline{F} = \overline{\tilde{F}}$ and therefore  $F/\tilde{F}$ is totally ramified.   By Lemma \ref{lem:totram}, $\tilde{F}$ splits $I$.
\end{proof}

\begin{thm} \label{thm:genus_cdvf2} 
Suppose $\overline{k}$ is perfect. Let   $I$ be unramified and $N$ be  NSR division algebras over $k$.  Then 
\begin{align*}
    gen^s_{spl}([I \otimes_k N]) \subseteq \{ [I' \otimes_k N']~ |~ [I']  \in gen^s_{spl}([I]) \text{~~  and~~} [N'] \in gen^s_{spl}([N])\}
\end{align*}
\end{thm}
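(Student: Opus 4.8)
The plan is to run the proof of Theorem \ref{thm:genus_cdvf} unchanged through its reduction step, and then exploit the hypothesis that $\overline{k}$ is perfect, together with Lemma \ref{lem:maxunramified}, to strengthen the conclusion on the inertial component. Concretely, I would take division algebras $D_1 \sim I_1 \otimes_k N_1$ and $D_2 \sim I_2' \otimes_k N_2$ (with $I_i$ inertial and $N_i$ NSR) such that $[D_2] \in gen^s_{spl}([D_1])$. The argument in the proof of Theorem \ref{thm:genus_cdvf} uses only that $D_1$ and $D_2$ have the same finite-dimensional separable splitting fields (via \cite[Lemma 2.5]{rapinchuk_genus_unramified} applied to the characters $\chi_{D_i} = \chi_{N_i}$), so it carries over verbatim and yields $e_{D_1} = e_{D_2}$ together with $D_2 \sim I_2 \otimes_k N_1^{\otimes j}$ for some $j$ with $(j,e_{N_1}) = 1$, after absorbing an unramified factor into $I_2$. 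Since $N_1$ is NSR, Proposition \ref{prop:genusramified} gives $gen^s_{spl}([N_1]) = gen^s([N_1]) = \{[N_1^{\otimes i}] \mid (i,e_{N_1})=1\}$, so $[N_1^{\otimes j}] \in gen^s_{spl}([N_1])$; it then remains only to show $[I_2] \in gen^s_{spl}([I_1])$.

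For that, I would take an arbitrary finite-dimensional separable extension $F/k$ that splits $I_1$ and show it splits $I_2$; by symmetry this gives $[I_2] \in gen^s_{spl}([I_1])$. Let $\tilde F/k$ be the maximal unramified subextension of $F/k$. Since $\overline k$ is perfect, Lemma \ref{lem:maxunramified} gives that $\tilde F$ splits $I_1$. As in the proof of Theorem \ref{thm:genus_cdvf}, choose a totally ramified separable maximal subfield $L/k$ of $N_1$ (Appendix \ref{appendix:totram}). Because $\tilde F/k$ is unramified and $L/k$ is totally ramified, they are linearly disjoint over $k$, so $\tilde F L \simeq \tilde F \otimes_k L$ is again a finite separable extension of $k$. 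This field splits $D_1 \sim I_1 \otimes_k N_1$ (it splits $I_1$ since $\tilde F$ does, and it splits $N_1$ since $L$ does), hence it splits $D_2 \sim I_2 \otimes_k N_1^{\otimes j}$; as $N_1^{\otimes j}$ is already split by $L$, and hence by $\tilde F L$, we deduce that $\tilde F L$ splits $I_2$, i.e. $(I_2 \otimes_k \tilde F) \otimes_{\tilde F} \tilde F L$ is split. The fundamental equality (\ref{eqn:fund_equality}) for the defectless extension $\tilde F L/\tilde F$ forces it to be totally ramified, since its degree is $[\tilde F L : \tilde F] = [L:k] = e_{N_1} = [\Gamma_L : \Gamma_k] \leq [\Gamma_{\tilde F L} : \Gamma_{\tilde F}]$. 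Hence Lemma \ref{lem:totram} applies and $I_2 \otimes_k \tilde F$ is split, i.e. $\tilde F$ splits $I_2$; one more application of Lemma \ref{lem:maxunramified} shows $F$ splits $I_2$.

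I expect the only genuinely delicate points to be: (i) verifying that $\tilde F L/\tilde F$ is totally ramified, which rests on linear disjointness of the unramified $\tilde F$ and the totally ramified $L$ over $k$ (handled as in the proof of Theorem \ref{thm:genus_cdvf} via \cite[Lemma 2.5.8]{field_arithmetic_book}) and on defectlessness of discrete valuations; and (ii) the repeated use of Lemma \ref{lem:maxunramified}, which is precisely the ingredient that fails when $\overline k$ is imperfect and is the sole reason the upgrade from $gen^s([I])$ to $gen^s_{spl}([I])$ on the inertial component requires perfectness. Everything else is a direct transcription of the proof of Theorem \ref{thm:genus_cdvf}, with "splitting field" consistently replacing "subfield".
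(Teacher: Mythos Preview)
Your proposal is correct and matches the paper's own approach exactly: the paper's proof of Theorem \ref{thm:genus_cdvf2} reads in its entirety ``The proof is similar to the proof of Theorem \ref{thm:genus_cdvf}, where $F$ is replaced with $\tilde{F}$. We leave the details to the reader.'' You have supplied precisely those details, including the two invocations of Lemma \ref{lem:maxunramified} and the verification that $\tilde{F}L/\tilde{F}$ is totally ramified.
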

\begin{proof}
The proof is similar to the proof of Theorem \ref{thm:genus_cdvf}, where $F$ is replaced with $\tilde{F}$. We leave the details to the reader.
\end{proof}
  Now  Theorem \ref{thm:genus_cdvf}, Theorem \ref{thm:genus_cdvf2} together with Proposition \ref{prop:inertial_residue}  and Proposition \ref{prop:genusramified} yields:

\begin{cor}\label{cor:splgenus}
 Let  $[D] \in SBr(k)$. Write $D \sim I \otimes_k N$  where  $I$ is inertial and $N$ is NSR. Then any element in $gen^s_{spl}([D])$ is of the form $[I' \otimes_k N^{\otimes j}]$ for some  $(j,e_D) =1$ where $[I'] \in gen^s([I])$. The algebra $I'$ is the (unique) inertial lift of some division algebra  whose class lies in $gen^s([\overline{I}])$. In particular
\begin{align*}
|gen^s_{spl}([D])| \leq |gen^s([\overline{I}])|\cdot \phi(e_D)
\end{align*}
where $\phi$ denotes the Euler's Totient function. If moreover $\overline{k}$ is perfect, then  $I'$ above is the (unique) inertial lift of some  division algebra whose class lies in $gen^s_{spl}([\overline{I}])$ and
\begin{align*}
|gen^s_{spl}([D])| \leq |gen^s_{spl}([\overline{I}])|\cdot \phi(e_D)
\end{align*}
\end{cor}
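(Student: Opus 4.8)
The plan is to assemble Corollary \ref{cor:splgenus} directly from the three preceding structural results, namely Theorem \ref{thm:genus_cdvf} (and its refinement Theorem \ref{thm:genus_cdvf2} when $\overline{k}$ is perfect), Proposition \ref{prop:genusramified}, and Proposition \ref{prop:inertial_residue}. First I would fix a decomposition $D \sim I \otimes_k N$ with $I$ inertial and $N$ NSR, as provided by the Wadsworth decomposition theorem recalled in \S\ref{sec:cdvf}; recall that $e_D = e_N$. By Theorem \ref{thm:genus_cdvf}, any $[D'] \in gen^s_{spl}([D])$ can be written as $[I' \otimes_k N']$ with $[I'] \in gen^s([I])$ and $[N'] \in gen^s([N])$. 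Now apply Proposition \ref{prop:genusramified} to the NSR algebra $N$ (which is tame and split by a totally ramified extension, by \cite[Theorem 2.4]{karim_nsr}): this gives $gen^s([N]) = \{[N^{\otimes j}] \mid (j, e_{[N]}) = 1\}$, so $[N'] = [N^{\otimes j}]$ for some $j$ coprime to $e_N = e_D$. This yields the stated form $[D'] = [I' \otimes_k N^{\otimes j}]$.

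Next I would identify the possibilities for $I'$. By Proposition \ref{prop:inertial_residue}, the residue isomorphism $\phi \colon IBr(k) \xrightarrow{\sim} Br(\overline{k})$ restricts to an injection $gen^s([I]) \hookrightarrow gen^s([\overline{I}])$; hence $[I']$ lies in $gen^s([I])$, and since $I'$ is inertial it is the unique inertial lift of a division algebra whose class $[\overline{I'}]$ lies in $gen^s([\overline{I}])$. Putting these together, the number of choices for $[I']$ is at most $|gen^s([\overline{I}])|$, and the number of choices for $j$ modulo $e_D$ (with $(j,e_D)=1$) is exactly $\phi(e_D)$, so $|gen^s_{spl}([D])| \leq |gen^s([\overline{I}])| \cdot \phi(e_D)$, giving the first displayed bound.

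For the perfect-residue case, the argument is identical but uses Theorem \ref{thm:genus_cdvf2} in place of Theorem \ref{thm:genus_cdvf}: there $[I']$ is forced to lie in $gen^s_{spl}([I])$, which by the second (bijective) half of Proposition \ref{prop:inertial_residue} corresponds bijectively to $gen^s_{spl}([\overline{I}])$. Hence $I'$ is the unique inertial lift of a division algebra whose class lies in $gen^s_{spl}([\overline{I}])$, and counting as before gives $|gen^s_{spl}([D])| \leq |gen^s_{spl}([\overline{I}])| \cdot \phi(e_D)$.

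I do not expect a serious obstacle here, since the corollary is essentially a bookkeeping combination of cited results; the one point requiring a little care is the transition from "$[N'] \in gen^s([N])$ for an abstract NSR-type $N'$" to "$[N'] = [N^{\otimes j}]$ with $(j,e_D)=1$", which is exactly where Proposition \ref{prop:genusramified} and the identity $e_N = e_D$ (together with $ord([N]) = e_{[N]}$ from Lemma \ref{lem:exponent}) are needed to guarantee that the exponents $j$ range over a set of size $\phi(e_D)$ and that distinct residues mod $e_D$ give distinct Brauer classes. The only other subtlety is observing that the two factorizations interact cleanly — i.e., that replacing $I'$ by $I' \otimes_k C$ for $[C] \in IBr(k)$ absorbed in the proof of Theorem \ref{thm:genus_cdvf} does not disturb the counting — but this is already handled inside that theorem's proof, so nothing new is required.
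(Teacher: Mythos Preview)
Your proposal is correct and follows exactly the paper's approach: the paper derives the corollary by simply citing Theorem \ref{thm:genus_cdvf}, Theorem \ref{thm:genus_cdvf2}, Proposition \ref{prop:inertial_residue}, and Proposition \ref{prop:genusramified}, and you have accurately spelled out how these combine.
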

As an easy corollary for the case of ${}_2Br(k)$ we get:
\begin{cor} \label{cor:spl_gen_up}
 Let $\overline{k}$ satisfy the property that the genus is trivial for any element in  ${}_2Br(\overline{k})$. Then $gen^s_{spl}([D])$ is trivial for any $[D] \in {}_2SBr(k)$.
\end{cor}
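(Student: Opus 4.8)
The plan is to specialize the structure result of Corollary \ref{cor:splgenus} to the $2$-torsion setting, where the Euler factor $\phi(e_D)$ collapses. First I would fix a decomposition $D \sim I \otimes_k N$ with $I$ inertial and $N$ NSR. Since $[D] \in {}_2SBr(k)$, the lemma preceding Theorem \ref{thm:genus_cdvf} gives $[I],[N] \in {}_2SBr(k)$; in particular $ord([N]) \mid 2$, and by Lemma \ref{lem:exponent} this forces $e_D = e_N = ord([N]) \in \{1,2\}$, so $\phi(e_D) = 1$.

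Next I would invoke Corollary \ref{cor:splgenus}: any class in $gen^s_{spl}([D])$ is represented by $I' \otimes_k N^{\otimes j}$ with $(j, e_D) = 1$, where $I'$ is the (unique) inertial lift of a division algebra whose class lies in $gen^s([\overline{I}])$. Since $ord([N]) \mid 2$, for every admissible (hence odd) $j$ we have $N^{\otimes j} \sim N$, and $N \sim k$ when $e_D = 1$; so the NSR part contributes only $[N]$.

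For the inertial part, note that the isomorphism $IBr(k) \simeq Br(\overline{k})$ of (\ref{eqn:unramified_residue}) is a group isomorphism and hence sends $[I] \in {}_2IBr(k)$ to $[\overline{I}] \in {}_2Br(\overline{k})$. By hypothesis the genus of $[\overline{I}]$ is trivial, so $gen^s([\overline{I}]) = \{[\overline{I}]\}$; therefore the division algebra lifted by $I'$ is $\overline{I}$ itself, and by uniqueness of the inertial lift $I' \cong I$ (equivalently, one may apply Proposition \ref{prop:inertial_residue}). Combining, every class in $gen^s_{spl}([D])$ equals $[I \otimes_k N] = [D]$, which is exactly the assertion.

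There is no real obstacle here: all the substantive work is already contained in Corollary \ref{cor:splgenus} and Proposition \ref{prop:inertial_residue}. The only points requiring a moment's care are verifying that the $2$-torsion hypothesis descends to both $I$ and $N$ (so that $\phi(e_D)=1$ and $[\overline{I}] \in {}_2Br(\overline{k})$), and recording that $N^{\otimes j}\sim N$ for odd $j$; once these are in place the conclusion is immediate.
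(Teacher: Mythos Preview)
Your proposal is correct and follows essentially the same route as the paper: the paper's proof simply observes that $e_D \mid ord([D])$ (citing \cite[Corollary 6.10]{wadsworth_henselian}), hence $e_D \mid 2$ and $\phi(e_D)=1$, and then appeals to Corollary~\ref{cor:splgenus}. Your argument is the same specialization of Corollary~\ref{cor:splgenus}, only spelled out in more detail---in particular you make explicit the verification that $[\overline{I}] \in {}_2Br(\overline{k})$ (so that the hypothesis applies to $\overline{I}$) and that $N^{\otimes j}\sim N$ for odd $j$, points the paper leaves implicit.
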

\begin{proof}
By \cite[Corollary 6.10]{wadsworth_henselian}, for any $[D] \in {}_2Br(k)$,   $e_D|ord([D])$. Hence $e_D=2$ and the result follows from   Corollary \ref{cor:splgenus}.
\end{proof}

\begin{example}
 Since the genus of any element in ${}_2Br(\mathbb{Q})$ (this element is necessarily the class of some quaternion)  is trivial  (\cite[\S 3.6]{rapinchuk_genus_unramified}), by recursively applying  Corollary \ref{cor:spl_gen_up}, we see that the $gen^s_{spl}([D])$ is trivial for any $[D] \in {}_2SBr(\mathbb{Q}(\!(t_1)\!)(\!(t_2)\!)\cdots(\!(t_n)\!))$
\end{example}

\begin{question} \label{question:genus_up}\normalfont
Does the statement of Corollary \ref{cor:splgenus} hold if we replace $gen^s_{spl}$ with $gen$?
\end{question}

\begin{remark}\label{rmk:genus_up}
    If the answer to the above question is positive, then the stronger version of Corollary \ref{cor:spl_gen_up} where $gen^s_{spl}([D])$ is replaced with $gen^s([D])$ holds for any $[D] \in {}_2SBr(k)$. 
\end{remark}

\section{The Genus of quaternion algebras} \label{sec:qdvt}

In this section $K$ is an arbitrary field of $char \neq 2$ with a set of discrete valuations $V$. As before,  we denote the completion of $K$  with respect to $v$ and its residue field respectively by $K_v$ and $\overline{K_v}$.
\begin{defn}\normalfont  
For a division algebra $D$ over $K$, we say that $D$ (or its class  in $Br(K)$) is tame  with respect to $V$ if $D_v:=D \otimes_K K_v$ is tame for every $v \in V$.  We denote that set of tame elements  of $Br(K)$ with respect to $V$ by $SBr(K,V)$.
\end{defn}

\begin{remark}\normalfont \label{rmk:inertiallysplit}\normalfont
If the exponent of  $D_v$ is coprime to $char~\overline{K_v}$ or when $\overline{K_v}$ is perfect, $D_v$ is tame (\cite[Theorem 1, Exercise 3(b), Chapter XII]{serre_local}. In particular, every $[D] \in {}_2Br(K)$ is tame over $V$ if $char~\overline{K_v} \neq 2$ or if $\overline{K_v}$ is perfect for every $v \in V$. 
\end{remark}

For a field $K$, let 
\begin{align*}
   {}_2\Sha^{Br}(K, V)  := ker( {}_{2}Br(K) \rightarrow \prod_{v \in V}{}_2Br(K_v))
\end{align*}

\begin{thm}\label{thm:main}
Let $[Q]\in SBr(K,V)$ be the class of a quaternion division algebra over $K$. Suppose for every $v\in V$, the genus of  any  quaternion division algebra  over $\overline{K_v}$ is trivial. Then
\begin{align*}
gen^s([Q]) \subseteq [Q]+ {}_2\Sha^{Br}(K, V)
\end{align*}
In particular,
\begin{align*}
    |gen^s([Q])| \leq |{}_2\Sha^{Br}(K, V)|
\end{align*}
\end{thm}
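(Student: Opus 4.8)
The plan is to compare $[Q]$ with an arbitrary $[Q'] \in gen^s([Q])$ by working locally at each $v \in V$ and then reassembling the information via the definition of ${}_2\Sha^{Br}(K,V)$. First I would observe that since $[Q] \in {}_2Br(K)$, any $[Q'] \in gen^s([Q])$ also lies in ${}_2Br(K)$: quaternion algebras are the degree-$2$ division algebras, and by \cite[Corollary 2.4]{RR_genus} sharing separable maximal subfields forces $Q'$ to have the same degree, hence $Q'$ is a quaternion and $2[Q']=0$. Consequently the class $[Q]+[Q'] = [Q] - [Q']$ lies in ${}_2Br(K)$, and it suffices to show this difference dies in ${}_2Br(K_v)$ for every $v \in V$, i.e. $[Q_v] = [Q'_v]$ in $Br(K_v)$ for all $v$.

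Fix $v \in V$. Since $[Q] \in SBr(K,V)$, the algebra $Q_v$ is tame over the complete discrete valued field $K_v$, and by Lemma~\ref{lem:imp} (applicable because $Q$ is tame with respect to $v$), we have $[\overline{Q'_v}] \in gen^s([\overline{Q_v}])$, where $\overline{Q_v}$ and $\overline{Q'_v}$ are the residue algebras of the underlying division algebras of $Q_v$ and $Q'_v$. Now $Q_v$ has index $1$ or $2$. If $Q_v$ is split, then by \cite[Corollary 2.4]{RR_genus} so is $Q'_v$ and we are done at $v$. If $Q_v$ is a quaternion division algebra, then by the fundamental equality and \cite[Remark 3.2(a)]{tignol_wadsworth_ramified} (no tame totally ramified division algebras over complete discrete valued fields), $e_{Q_v} \in \{1,2\}$. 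In the ramified case $e_{Q_v}=2$: then $Q_v$ contains a totally ramified maximal subfield, and Proposition~\ref{prop:genusramified} gives $gen^s([Q_v]) = \{[Q_v^{\otimes i}] \mid (i,2)=1\} = \{[Q_v]\}$, so again $[Q'_v]=[Q_v]$ — but wait, this uses that $[Q'_v] \in gen^s([Q_v])$, which follows from \cite[Corollary 2.4]{RR_genus}. In the unramified case $e_{Q_v}=1$: then $\overline{Q_v}$ is a quaternion division algebra over $\overline{K_v}$ (by the fundamental equality, $[\overline{Q_v}:\overline{K_v}]=4$), and since $[\overline{Q'_v}] \in gen^s([\overline{Q_v}])$ and by hypothesis the genus of any quaternion over $\overline{K_v}$ is trivial, we get $[\overline{Q'_v}] = [\overline{Q_v}]$. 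By Theorem~\ref{thm:sub_genus} (the unramified prime-index case, with $p=2$), or directly by the injectivity of $\phi$ in Proposition~\ref{prop:inertial_residue}, this forces $[Q'_v]=[Q_v]$ in $IBr(K_v)$, since both $Q_v$ and the underlying division algebra of $Q'_v$ are unramified (the latter because $Q'_v$ is unramified iff $Q_v$ is, by \cite[Lemma 2.5]{rapinchuk_genus_unramified}) and the residue map \eqref{eqn:unramified_residue} is injective on inertial classes.

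Combining all cases, $[Q_v] = [Q'_v]$ in $Br(K_v)$ for every $v \in V$, hence $[Q]-[Q'] \in \ker({}_2Br(K) \to \prod_{v\in V} {}_2Br(K_v)) = {}_2\Sha^{Br}(K,V)$, which gives $[Q'] \in [Q] + {}_2\Sha^{Br}(K,V)$ and thus $gen^s([Q]) \subseteq [Q] + {}_2\Sha^{Br}(K,V)$. The bound on $|gen^s([Q])|$ follows since translation by $[Q]$ is a bijection on ${}_2Br(K)$. The main obstacle I anticipate is the careful case analysis at each $v$ — in particular confirming that "$Q'_v$ unramified $\iff$ $Q_v$ unramified" and that the residue algebra genus information transfers cleanly through the index-$1$-versus-$2$ dichotomy; once Lemma~\ref{lem:imp}, Theorem~\ref{thm:sub_genus}, and Proposition~\ref{prop:genusramified} are in hand, the local analysis is essentially forced, and the only genuinely new ingredient is packaging the vanishing of all local differences as membership in ${}_2\Sha^{Br}(K,V)$.
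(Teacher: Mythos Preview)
Your proof is correct and follows the same approach as the paper's. The only difference is packaging: the paper invokes Corollary~\ref{cor:sub_genus_up} (triviality of the genus of any tame quaternion over a complete discrete valued field whose residue field has the trivial-genus property) as a single black box after passing to completions via \cite[Corollary 2.4]{RR_genus}, whereas you unpack that corollary inline by running the split/ramified/unramified case analysis from Theorem~\ref{thm:sub_genus} and Propositions~\ref{prop:genusramified} and~\ref{prop:inertial_residue} directly.
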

\begin{proof}
Let $[P] \in gen^s([Q])$.   By \cite[Corollary 2.4]{RR_genus} ,  $[P_v] \in gen^s([Q_v])$ for every $v \in V$. Now by Corollary \ref{cor:sub_genus_up}, $[P_v] \sim [Q_v]$ for every $v \in V$. Therefore $[P \otimes_K Q] \in {}_2\Sha^{Br}(K, V)$ and the theorem follows. 
\end{proof} 

As an easy corollary of the theorem, we get:

\begin{cor}\label{cor:local_global}
Assume that $K$ satisfies local-global principle on the 2-torsion part of the Brauer group with respect to $V$ i.e, ${}_2\Sha^{Br}(K, V)$ is trivial. Suppose for every $v \in V$,  the genus of any class of quaternion division algebra in    $Br(\overline{K_v})$ is trivial, then the genus of any class of quaternion division algebra in  $SBr(K,V)$ is trivial. 
\end{cor}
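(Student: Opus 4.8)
The plan is to deduce this immediately from Theorem \ref{thm:main}, of which the present statement is just the specialization to the case where the obstruction group vanishes. So I would fix a quaternion division algebra $Q$ over $K$ whose class lies in $SBr(K,V)$, and observe that the hypothesis of the corollary — that for every $v \in V$ the genus of any quaternion division algebra over $\overline{K_v}$ is trivial — is precisely the standing hypothesis of Theorem \ref{thm:main}. (The ambient assumption $char~K \neq 2$ is in force throughout the section, and tameness of $Q$ with respect to $V$ is built into $[Q] \in SBr(K,V)$, so no extra bookkeeping is needed.) Theorem \ref{thm:main} then applies verbatim and gives
\begin{align*}
gen^s([Q]) \subseteq [Q] + {}_2\Sha^{Br}(K,V).
\end{align*}

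Next I would invoke the local-global hypothesis, which says exactly that ${}_2\Sha^{Br}(K,V)$ is the trivial group; hence the right-hand side above collapses to the singleton $\{[Q]\}$, so $gen^s([Q]) \subseteq \{[Q]\}$. Since a division algebra always shares all of its separable maximal subfields with itself, we have $[Q] \in gen^s([Q])$, so the inclusion is forced to be an equality $gen^s([Q]) = \{[Q]\}$, i.e. the genus of $[Q]$ is trivial. As $Q$ was an arbitrary quaternion division algebra with class in $SBr(K,V)$, this proves the corollary.

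I expect no real obstacle: all the substance has already been carried by Theorem \ref{thm:main} (which itself rests on Corollary \ref{cor:sub_genus_up} and \cite[Corollary 2.4]{RR_genus}), and the corollary is a one-line consequence of feeding in the vanishing of ${}_2\Sha^{Br}(K,V)$. If one preferred a self-contained argument, the proof of Theorem \ref{thm:main} could be inlined — localize at each $v$ via \cite[Corollary 2.4]{RR_genus}, apply Corollary \ref{cor:sub_genus_up} to get $[P_v] \sim [Q_v]$ for all $v$, conclude $[P \otimes_K Q] \in {}_2\Sha^{Br}(K,V) = 0$ — but citing the theorem is cleaner.
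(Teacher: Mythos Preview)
Your proposal is correct and matches the paper's approach exactly: the paper presents this corollary with no proof, prefacing it only with ``As an easy corollary of the theorem, we get,'' so your deduction from Theorem~\ref{thm:main} by feeding in ${}_2\Sha^{Br}(K,V)=0$ is precisely what is intended.
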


\begin{cor}\label{cor:examples}
If $K$ is one of the following fields, the genus of any quaternion division algebra over $K$ is trivial
\begin{enumerate}
    \item  Higher local fields where the final residue field has characteristic $\neq 2$
    \item  Iterated Laurent series $k(\!(t_1)\!)(\!(t_2)\!)\cdots(\!(t_n)\!)$ (resp. their finite extensions) where $char~k \neq 2$ and every quaternion division algebra over $k$ (resp. every finite extension of $k$) has trivial genus
    \item Function fields of  one variable over fields in (1)
    \item Function fields of one variable any real closed field 
    \item Function fields of  one variable  over fields in (2) where  for any curve $C$ over $k$ (where $k$ is as in (2)),  every quaternion algebra over every finite extension of $k$ and  over $k(C)$ has trivial genus (for example, $k(\!(t_1)\!)(\!(t_2)\!)\cdots(\!(t_n)\!)(C)$  where one can take $k$ to be  any  real closed field by  (4))
\end{enumerate}
\end{cor}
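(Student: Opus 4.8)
The plan is to derive each case of Corollary~\ref{cor:examples} by verifying the hypotheses of Theorem~\ref{thm:main} or of Corollary~\ref{cor:local_global}: namely, that every quaternion is tame with respect to a suitable set of discrete valuations $V$, that the residue fields $\overline{K_v}$ satisfy ``quaternion genus is trivial'', and that ${}_2\Sha^{Br}(K,V)$ vanishes. For the tameness requirement I would invoke Remark~\ref{rmk:inertiallysplit}: as long as each residue field has characteristic $\neq 2$ or is perfect, every class in ${}_2Br(K)$ is automatically tame over $V$, so that point is essentially free in all five cases once $V$ is chosen correctly.

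For case (1), I would take $V$ to be the single complete discrete valuation defining the higher local structure and argue by induction on $\mathrm{cdv.dim}$, using \S\ref{sec:higher_local}: the residue field $\overline{K}$ of an $n$-dimensional local field is an $(n-1)$-dimensional local field (with the final residue field unchanged, hence of characteristic $\neq 2$), and the base of the induction is a finite field or $\mathbb{R}/\mathbb{C}$, over which the quaternion genus is classically trivial. The key mechanism is Corollary~\ref{cor:sub_genus_up}: if quaternion genus is trivial over $\overline{K}$ it is trivial over $K$ for tame quaternions, and tameness holds by Remark~\ref{rmk:inertiallysplit}. Case (2) is the same induction with $k$ (or a finite extension of $k$) as the terminal residue field; note finite extensions of $k(\!(t_1)\!)\cdots(\!(t_n)\!)$ are again iterated Laurent series over finite extensions of $k$, so the hypothesis is stable under the extensions that appear as residue fields. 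Case (4) over a real closed field $R$: here $R(C)$ is the function field of a curve over $R$, and one can either cite that such fields are transparent / have trivial quaternion genus directly, or realize it via Theorem~\ref{thm:main} taking $V$ = divisorial valuations on a regular model; the residue fields are finite extensions of $R$ (so $R$ or $R[\sqrt{-1}]=\overline{R}$, both with trivial quaternion genus) together with possibly $R(t)$-type fields handled by the Stability Theorem, and ${}_2\Sha^{Br}$ vanishes by a Hasse-principle statement for curves over real closed fields.

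For cases (3) and (5), which concern function fields of curves $C$ over a field $k$ of the kind in (1) or (2), I would choose $V$ to be the set of discrete valuations coming from codimension-one points of a regular proper model $\mathcal{C}$ of $C$ over the valuation ring of $k$ (or over $k$ itself when $k$ is the base). Each residue field $\overline{K_v}$ is then either a finite extension of $k$ or the function field of a curve over the residue field of $k$; in case (3) the residue field of $k$ is a lower-dimensional higher local field, so by (1) and (3) applied inductively (and by the Stability Theorem for the rational-function-field residue fields) quaternion genus is trivial there, and similarly in case (5) using the standing hypothesis on curves over $k$. The remaining and genuinely substantive input is the vanishing of ${}_2\Sha^{Br}(K,V)$: for semi-global fields $k(\!(t)\!)(C)$ this is the local-global principle for the Brauer group of \cite[Theorem 4.2, Theorem 4.3(ii)]{parimala_local_global}, which I would cite, and for the iterated case one peels off one Laurent variable at a time. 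I expect this local-global (Brauer-Hasse-Noether-type) vanishing to be the main obstacle: tameness and triviality of the residue-field genus are bookkeeping via Remark~\ref{rmk:inertiallysplit} and Corollary~\ref{cor:sub_genus_up}, but identifying the correct $V$ so that ${}_2\Sha^{Br}(K,V)=0$ while all $\overline{K_v}$ remain ``good'' is where the real content lies, and for case (5) this is precisely why the hypothesis must be imposed that quaternion genus is trivial over $k(C)$ as well as over finite extensions of $k$.
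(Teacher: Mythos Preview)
Your overall strategy---choose $V$, verify tameness via Remark~\ref{rmk:inertiallysplit}, verify triviality of quaternion genus over each $\overline{K_v}$ by induction, and invoke a local-global principle to kill ${}_2\Sha^{Br}(K,V)$---is exactly the paper's approach, and your treatment of cases (1), (2), (3), (5) matches it essentially verbatim (including the citation to \cite[Theorem~4.2 and Theorem~4.3(ii)]{parimala_local_global} for the semi-global case). Two small corrections: in (1) the final residue field of a higher local field is \emph{finite} by definition (\S\ref{sec:higher_local}), not $\mathbb{R}$ or $\mathbb{C}$; and in (3) the base case $d=0$ is a global field, where triviality of the quaternion genus is \cite[\S3.6]{rapinchuk_genus_unramified}, not the Stability Theorem.

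Case (4) is where your sketch is imprecise. Over a real closed field $R$ there is no valuation ring to spread out over, so a ``regular model'' is just the smooth projective curve $C$ itself; the residue fields at closed points are $R$ or its algebraic closure, and no ``$R(t)$-type'' fields appear. The paper's choice is in fact even smaller: take $V$ to be the valuations coming from the $R$-\emph{rational} points $C(R)$ only. The relevant local-global input is then \cite[Theorem~2.3.1]{parimala_real}, which gives injectivity of the specialisation map $Br(C)\to\prod_{P\in C(R)}Br(k_P)$; one passes from specialisations $\alpha(P)$ to completions $\alpha_P$ via Lemma~\ref{lem:specialize}, yielding ${}_2\Sha^{Br}(K,V)=0$. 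Since each residue field is $R$ itself (where $Br(R)=\{0,[\mathbb{H}]\}$ and quaternion genus is trivially trivial), Corollary~\ref{cor:local_global} applies.
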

\begin{proof} For each $K$ as above, the proof below considers the  discrete valuations $V$  where residue fields are of  characteristic $\neq 2$, so  all the division algebras over these fields are tame by Remark \ref{rmk:inertiallysplit}. \\
 \indent For (1) set $V = {v}$  to be the canonical discrete valuation on  $K$. Then by induction on $cdv.dim(K)$ and observing that the final residue field is finite, the result follows from Corollary \ref{cor:local_global}.  The proof of (2) is similar by using induction on $n$. \\
\indent For  (3), let $K = F(C)$  where $F$ is a field in (1). We will use induction on $d:=cdv.dim(F)$   of the higher local field $F$. When $d = 0$, $F$ is a finite field and $K$ is a global field and hence the genus of any quaternion over $K$ is trivial (\cite[\S 3.6]{rapinchuk_genus_unramified}). Now assume that the property is satisfied by $K$ for every $F$ with  $cdv.dim(F) \leq d$. Let $cdv.dim(F) = d+1$ and let  $V$ be  discrete valuations corresponding to  codimension 1 points  in a regular proper model of $K$ over $\mathcal{O}_F$. Then by \cite[Theorem 4.2 and Theorem 4.3 (ii)]{parimala_local_global}, ${}_2\Sha^{Br}(K, V)$ is trivial.  Moreover, in this case, the residue fields corresponding to $v \in V$ are either finite extensions of $F$ which is again a higher local field (\S\ref{sec:higher_local}) or of the form  $F'(C')$ with $cdv.dim(F') \leq d$. So by (1) and  induction hypothesis, the result now follows from Corollary \ref{cor:local_global}. \\
\indent For (4), let $C$ be a smooth projective curve over a real closed field $R$ with function field  $K$. Such a curve exists by \cite[Theorem 53.2.6 and Lemma 53.2.8]{stacks-project}. By  in \cite[Theorem 2.3.1]{parimala_real}, the specialization  map  $Br(C) \rightarrow \prod_{P \in C(R)} Br(k_P)$   is injective where $k_P$ is the residue field at $P$. Taking $V$ to be the discrete  valuations corresponding to the points in $C(R)$, Lemma \ref{lem:specialize}  yields triviality of ${}_2\Sha^{Br}(K, V)$ and so the result follows from  Corollary \ref{cor:local_global}.\\
\indent The proof of (5) is again using an induction argument similar to (3). 
\end{proof}

\subsection{Curves over global fields with rational point} \label{sec:elliptic}
Let $C$ be a smooth projective geometrically integral curve over a global field $k$ with a $k$-rational point. In this section, we discuss the relation between the genus of  quaternion algebras over the function field of $C$   and the $2$-torsion  subgroup of the Tate-Shafarevich  group of the Jacobian of $C$.  We deduce that the size of the  genus is bounded above by the size of the  $2$-torsion component of the Tate-Shafarevich  group of the Jacobian.  Then we specialize to the case of elliptic curves and  demonstrate that this bound is better than the one known before.

Let $C$ be a smooth projective geometrically integral curve over a global field $k$ with $C(k) \neq \emptyset$ and let $J_C$ denote its Jacobian.   Recall that the Tate-Shafarevich  group of $J_C$, denoted by  $\Sha(J_C)$ is defined as
\begin{align*}
    \Sha(J_C) = ker(H^1(k, J_C(k^{sep}) \rightarrow  \prod_{v \in P(k)} H^1(k_v, J_C(k^{sep}_v))
\end{align*}
where $P(k)$ denotes the set of places of $k$. Let $C_v: = C \times_{Spec~k} Spec~k_v$ and let $\Sha^{Br}(C)$  be the kernel of the local global map on the Brauer group of $C$ with respect to $P(k)$, i.e,
\begin{align*}
    \Sha^{Br}(C) := ker( Br(C) \rightarrow \prod_{v \in P(k)} Br(C_v))
\end{align*}
By \cite[\S 2(B)]{parimala_sujatha}, we have the following isomorphism\footnote{Although the isomorphism  (\ref{eqn:parimala_sujatha}) is shown for number fields in \cite{parimala_sujatha}, the same arguments show that the isomorphism holds for global fields as well.}:
\begin{align}\label{eqn:parimala_sujatha}
    \Sha^{Br}(C) \simeq \Sha(J_C)
\end{align}
Let $S:=Spec~\mathcal{O}_k$ where  $\mathcal{O}_k$ is the ring of integers  of $k$ when $char~k =0$ and  a smooth complete curve over its field of constants with function field $k$  when $char~k \neq 0 $. Suppose $\mathcal{C} \rightarrow S$ is a regular proper  model for $C$. Let $V_{\mathcal{C}}$ denote the valuations on $k(C)$ corresponding to codimension one points $\mathcal{C}^{(1)}$ of $\mathcal{C}$. We define (see \S Appendix \ref{sec:appendix} for details),
\begin{align*} 
    \Sha^{Br}(C/\mathcal{C}) := \Sha^{Br}(k(C), V_{\mathcal{C}}) =  ker(Br(k(C)) \rightarrow \prod_{x \in \mathcal{C}^{(1)}} Br(k(C)_x)
\end{align*}
 where $k(C)_x$ denotes the completion of $k(C)$ with respect to the valuation corresponding to the codimension $1$ point $x \in \mathcal{C}^{(1)}$.

Then by the isomorphism in (\ref{eqn:parimala_sujatha}) together with Theorem \ref{thm:comparison}, we conclude

\begin{thm}\label{thm:sha_E}
 With notations as above, we have
\begin{align*}
    \Sha^{Br}(C/\mathcal{C}) \simeq \Sha(J_C)
\end{align*}
In particular,  when $C=E$ is an elliptic curve with a regular proper  model $\mathcal{E}$,
\begin{align*}
    \Sha^{Br}(E/\mathcal{E}) \simeq \Sha(E)
\end{align*}
\end{thm}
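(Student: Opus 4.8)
The plan is to deduce the statement by concatenating the two facts already set up. The isomorphism (\ref{eqn:parimala_sujatha}) gives $\Sha^{Br}(C) \simeq \Sha(J_C)$, where $\Sha^{Br}(C)$ is the local--global obstruction on $Br(C)$ taken over the places of $k$; and Theorem \ref{thm:comparison} identifies this group with $\Sha^{Br}(C/\mathcal{C}) = \Sha^{Br}(k(C), V_{\mathcal{C}})$, the analogous obstruction on $Br(k(C))$ taken over the codimension one points of the regular proper model $\mathcal{C} \to S$. Chaining the two isomorphisms yields $\Sha^{Br}(C/\mathcal{C}) \simeq \Sha^{Br}(C) \simeq \Sha(J_C)$, which is the first assertion. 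For the second, take $C = E$: an elliptic curve carries its origin as a $k$-rational point (so $E(k) \neq \emptyset$) and is canonically isomorphic to its own Jacobian, whence $\Sha(J_E) = \Sha(E)$ and the general statement specializes directly. So at the level of Theorem \ref{thm:sha_E} there is nothing to compute; the mathematical weight sits in Theorem \ref{thm:comparison}.

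Accordingly, that is where I would concentrate the work, and here is the shape of the argument I would run. First I would split the codimension one points $x \in \mathcal{C}^{(1)}$ into the \emph{horizontal} ones --- closures of closed points of the generic fibre $C$, equivalently valuations of $k(C)$ trivial on $k$ --- and the \emph{vertical} ones --- generic points of irreducible components of the special fibres $\mathcal{C}_v$ over closed points $v$ of $S$, equivalently valuations of $k(C)$ restricting to $v$. Given $\alpha \in \Sha^{Br}(k(C), V_{\mathcal{C}})$, triviality of $\alpha$ in every completion $k(C)_x$ in particular kills its residue at $x$, so the purity/residue exact sequence of \S\ref{sec:unramified} applied to the regular scheme $\mathcal{C}$ (with the standard $p$-primary caveat at imperfect residue points) places $\alpha$ in $Br(\mathcal{C}) \subseteq Br(C)$. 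The remaining, and essential, step is to show place by place that $\alpha$ is trivial in $Br(C_v)$ if and only if it is trivial in $k(C)_x$ for every $x$ lying over $v$: here $\mathcal{C} \times_S Spec~\mathcal{O}_{k_v}$ (with $\mathcal{O}_{k_v}$ the ring of integers of $k_v$) is a regular proper model of $C_v$ over a complete discrete valuation ring, and one invokes that a Brauer class on such a model which is trivial along every component of the closed fibre and every horizontal divisor meeting it is trivial on the whole model, hence on its generic fibre $C_v$, the converse being the easier specialization direction. Reassembling over all $v$ identifies $\Sha^{Br}(k(C), V_{\mathcal{C}})$ with $\Sha^{Br}(C)$ as subgroups of $Br(k(C))$.

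The main obstacle I anticipate is precisely that last step: making rigorous the passage, at a fixed place $v$, between triviality on the base-changed curve $C_v$ and triviality at the family of completions $k(C)_x$ coming from the model --- a genuine patching statement on the arithmetic surface over the complete local base --- and, alongside it, checking that the group $\Sha^{Br}(C/\mathcal{C})$ does not depend on the chosen regular proper model $\mathcal{C}$ and handling the $p$-primary components at places with imperfect residue field, both already flagged in \S\ref{sec:unramified}. These are exactly the points that Theorem \ref{thm:comparison} and the appendix are designed to dispatch; granting them, both displayed isomorphisms of Theorem \ref{thm:sha_E} follow by the one-line composition above.
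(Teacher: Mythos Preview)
Your deduction of Theorem~\ref{thm:sha_E} is exactly the paper's: it is stated immediately before the theorem as ``by the isomorphism in (\ref{eqn:parimala_sujatha}) together with Theorem~\ref{thm:comparison}, we conclude\ldots'', and you have reproduced precisely that composition, including the specialization to $C=E$ via $J_E\simeq E$.

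Since you put the real effort into sketching Theorem~\ref{thm:comparison}, it is worth noting where your outline diverges from the paper's actual argument in the appendix. For the inclusion $\Sha^{Br}(C/\mathcal{C})\subseteq\Sha^{Br}(C)$ at a non-archimedean place $v$, the paper does not run a patching statement of the type you describe (``trivial along every component of the closed fibre and every horizontal divisor implies trivial on the model''). It invokes something stronger and cleaner: for $\mathcal{C}_v=\mathcal{C}\times_{\mathcal{O}_k}\mathcal{O}_v$ a regular proper curve over a complete DVR with \emph{finite} residue field, one has $Br(\mathcal{C}_v)=0$ outright (Grothendieck, \cite[Theorem 3.1]{brauer3}), so $\alpha\otimes C_v$ dies automatically once $\alpha\in Br(\mathcal{C})$. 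Your sketch also omits the archimedean places, which the paper handles separately: Tsen's theorem for complex $v$, and for real $v$ the injectivity of $Br(C_{\tilde{k}_v})\to\prod_{y}Br(k(y))$ over real closed points \cite[Theorem 2.3.1]{parimala_real}, combined with Lemma~\ref{lem:specialize}. For the reverse inclusion $\Sha^{Br}(C)\subseteq\Sha^{Br}(C/\mathcal{C})$, the horizontal case is dispatched not by purity alone but by specializing $\alpha$ to the closed point $x\in C$ and then killing $\alpha(x)\in Br(k(x))$ via Albert--Brauer--Hasse--Noether for the global field $k(x)$; the vertical case is the easy containment $k_v(C)\subseteq k(C)_x$. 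None of this affects your proof of Theorem~\ref{thm:sha_E} itself, but your anticipated ``main obstacle'' is resolved in the paper by rather different means than you propose.
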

\begin{remark}\normalfont
Recall that $index = period$ for any element in  $\Sha(E)$ (\cite[Theorem 1.3]{cassels4}). Therefore the same holds for  $\Sha^{Br}(E/\mathcal{E})$. In particular, the elements in  ${}_2\Sha^{Br}(E/\mathcal{E})$ correspond to quaternions.
\end{remark}
 The residue fields of $K:=k(C)$ corresponding  to $V_{\mathcal{C}}$ are global fields and therefore the genus of any class of quaternion division algebra in  ${}_2Br(\overline{K_v})$ is trivial for every $v \in V$ (\cite[\S 3.6]{rapinchuk_genus_unramified}).  So by Theorem \ref{thm:main} and  Theorem \ref{thm:sha_E}, we conclude 

\begin{thm}\label{thm:gen2}
With notations as above, let $[Q] \in {}_2SBr(K,V)$ be the class of a quaternion algebras  where $K=k(C)$ and $V = V_{\mathcal{C}}$. Then 
\begin{align*}
    |gen^s([Q])| \leq |{}_2\Sha(J_C)|
\end{align*}
Therefore when $C=E$ is an elliptic curve,  we have 
\begin{align}\label{eqn:gen_sha}
    |gen^s([Q])| \leq |{}_2\Sha(E)|
\end{align}
In particular, we get $|gen^s([Q])|$ is trivial whenever ${}_2\Sha(E)$ is trivial.
\end{thm}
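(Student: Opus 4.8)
The plan is to obtain the bound by concatenating the two ingredients already in hand: the local-to-global description of the quaternion genus in Theorem~\ref{thm:main}, and the identification $\Sha^{Br}(C/\mathcal{C}) \simeq \Sha(J_C)$ of Theorem~\ref{thm:sha_E}. The only new work is checking that the hypotheses of Theorem~\ref{thm:main} hold for the valuation set $V = V_{\mathcal{C}}$, which comes down to understanding the residue fields along $V_{\mathcal{C}}$.

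First I would verify the local genus-triviality hypothesis. For $v \in V_{\mathcal{C}}$ corresponding to a codimension one point $x$ of the regular proper model $\mathcal{C} \to S$, the residue field $\overline{K_v}$ is the residue field of $k(C)$ at $x$: it is a finite extension of $k$ when $x$ lies over the generic point of $S$, and the function field of a curve over the (finite) residue field of $S$ at a closed point when $x$ is vertical. In either case $\overline{K_v}$ is a global field, so by \cite[\S 3.6]{rapinchuk_genus_unramified} the genus of every quaternion division algebra over $\overline{K_v}$ is trivial. Since $[Q] \in {}_2SBr(K,V)$ is by definition tame with respect to every $v \in V$ --- at the non-dyadic places this is automatic by Remark~\ref{rmk:inertiallysplit}, so the hypothesis is really tameness at the dyadic places --- Theorem~\ref{thm:main} applies and gives
\begin{align*}
gen^s([Q]) \subseteq [Q] + {}_2\Sha^{Br}(K, V_{\mathcal{C}}), \qquad |gen^s([Q])| \leq |{}_2\Sha^{Br}(K, V_{\mathcal{C}})|.
\end{align*}

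It then remains to identify the right-hand side. By definition ${}_2\Sha^{Br}(K, V_{\mathcal{C}})$ is the $2$-torsion of $\Sha^{Br}(C/\mathcal{C})$, and Theorem~\ref{thm:sha_E} gives $\Sha^{Br}(C/\mathcal{C}) \simeq \Sha(J_C)$, whence ${}_2\Sha^{Br}(K, V_{\mathcal{C}}) \simeq {}_2\Sha(J_C)$ and therefore $|gen^s([Q])| \leq |{}_2\Sha(J_C)|$. For $C = E$ an elliptic curve one has $J_E = E$, so this specializes to $|gen^s([Q])| \leq |{}_2\Sha(E)|$, and if ${}_2\Sha(E)$ is trivial the bound forces $gen^s([Q])$ to be trivial. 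At this level the argument is purely formal; the substance was already absorbed into Theorem~\ref{thm:main} (reduction of the quaternion genus to the local problem and to ${}_2\Sha^{Br}$) and Theorem~\ref{thm:sha_E} (the model-independent isomorphism with $\Sha(J_C)$, resting on \cite{parimala_sujatha} and Theorem~\ref{thm:comparison}). The one point that still needs care is the bookkeeping above: ensuring that the residue fields along $V_{\mathcal{C}}$, including at the dyadic places where they may have characteristic $2$, are genuinely global fields for which quaternion genus triviality is available, and that the tameness built into ${}_2SBr(K,V)$ is exactly what Theorem~\ref{thm:main} requires.
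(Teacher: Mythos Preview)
Your proposal is correct and follows essentially the same route as the paper: verify that the residue fields along $V_{\mathcal{C}}$ are global fields so that quaternion genus triviality holds there, invoke Theorem~\ref{thm:main} to bound $|gen^s([Q])|$ by $|{}_2\Sha^{Br}(K,V_{\mathcal{C}})|$, and then apply Theorem~\ref{thm:sha_E} to identify this with $|{}_2\Sha(J_C)|$. Your additional bookkeeping (distinguishing horizontal from vertical divisors, and noting where tameness is automatic versus genuinely assumed) only makes the argument more explicit than the paper's one-line justification.
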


\begin{remark}\normalfont
Comparing with \cite[Theorem 4.1, Corollary 4.11]{rapinchuk_genus_unramified}, we see that the  bound in (\ref{eqn:gen_sha}) gives a better estimate of the genus of any class of  quaternion algebra $[Q]\in SBr(K,V)$ since for a regular proper model $\mathcal{E}$ of $E$, $\Sha(E) \simeq \Sha^{Br}(E/\mathcal{E}) \subseteq Br(E)_{V_0 \cup V_1}$ where $V_0$ and $V_1$ are the sets of valuations on $K$ given in   \cite[\S4]{rapinchuk_genus_unramified}.  See also Example \ref{example:elliptic}.
\end{remark}

One can extensively use the arithmetic properties of a given  elliptic curve $E$ over a number field  together with Theorem \ref{thm:gen2}  to compute  bounds on  the genus. We will demonstrate this  below. \\
\indent For the rest of the section, $E$ denotes  an elliptic curve over a number field $k$  and $[Q]$ is an arbitrary class of quaternion division algebra in $SBr(K,V)$ where  $K=k(E)$, $V= V_{\mathcal{E}}$ for a  regular proper model $\mathcal{E}$  of $E$.

\begin{example}\normalfont  \label{example:elliptic}
Let $E$  be given by $y^2 = x^3 - x$ over $\mathbb{Q}$. We have ${}_2\Sha(E) =0$ (one can verify using MAGMA) and therefore from Theorem \ref{thm:gen2}, we conclude that  $gen^s([Q])$ is trivial. Compare with \cite[Example 4.12]{rapinchuk_genus_unramified}.
\end{example}

\begin{example}\normalfont
Let $p$ be an odd prime and let $E$ be given by $y^2 = x^3 +px$ over $\mathbb{Q}$. By Theorem \ref{thm:gen2} and \cite[Proposition X.6.2(c)]{silverman_elliptic}, we get

\begin{align*}
    |gen^s([Q])| \leq \begin{cases}1 \text{~~~if~~~} p \equiv 7,11 (mod~16)\\
    2 \text{~~~if~~~} p \equiv 3,5, 13, 15 (mod~16)\\
    4  \text{~~~if~~~} p \equiv 1,9 (mod~16)
    \end{cases}
\end{align*}
\end{example}
We will now explicitly show triviality of the genus of some quaternion division algebras  over $\mathbb{Q}(E)$  using  arithmetic properties of $E$ even when ${}_2\Sha(E)$ is not trivial. Given an isogeny  $\phi: E \rightarrow E'$ over $k$ with  $\hat{\phi}$ denoting  the dual isogeny, let $S^{\phi}(E)$ and $S^{\hat{\phi}}(E')$ denote the respective Selmer groups. 

We have an exact sequence of groups \cite[Chapter X, Theorem 4.2] {silverman_elliptic}
\begin{align*}
    0 \rightarrow E'(k)/\phi(E(k)) \rightarrow S^{\phi}(E/k) \rightarrow {}_{\phi}\Sha(E/k) \rightarrow 0 
\end{align*}
where  ${}_{\phi}\Sha(E/k)$ denotes the $\phi$-torsion subgroup of $\Sha(E/k)$.  Let $S \subset P(k)$ denote  the union of the set of infinite places, the set of finite primes
at which $E$ has bad reduction, and the set of finite primes dividing $2$.  Recall \cite[Chapter X, Proposition 1.4 and Proposition  4.9]{silverman_elliptic} that if $\phi$  is of degree $2$, $S^{\phi}(E/k)$ and $ S^{\hat{\phi}}(E'/k)$ are subgroups of $ k(S,2)$ where 
\begin{align*}
    k(S,2) = \{b \in k^*/(k^*)^2 : ord_v(b) \equiv 0 (mod~2) \forall v \notin S\}
\end{align*}

\begin{example} \normalfont
Let $E$ be given by $y^2 = x^3 -113x$ over $k:=\mathbb{Q}$. There is $2$-isogeny (\cite[X.6]{silverman_elliptic})
\begin{align*}
    \phi: E \rightarrow E'
\end{align*}
where $E'$ is given by $y^2 = x^3 + 452x$.   Let $E'^{(d)}$ denote the quadratic twist of $E'$ by $d$. We observe that $E'^{(-1)} \simeq E'$. Let $l:=\mathbb{Q}(\sqrt{-1})$.  One can verify that   $E'_{tors}(l) = E'_{tors}(k)$ and $rank(E'(k))=0$ using Magma.  Moreover  by \cite[Chapter X, Exercise 10.16]{silverman_elliptic}, $rank(E'(l)) = rank(E'(k)) + rank(E'^{(-1)}(k)) = 2\cdot rank(E'(k)) = 0$.   Therefore, $E'(l) = E'(k)$. \\
\indent Now let $\hat{\phi}$ denote the dual isogeny. From the  computations given in \cite[Lemma 3.4]{parimala_sujatha}, we get
\begin{align*}
S^{\phi}(E/k) = \{ 1, 113, 2, 226\} (~mod~ (k^*)^{2})\\
S^{\hat{\phi}} (E'/k) = \{\pm 1 \pm 113\} (~mod~ (k^*)^{2})
\end{align*}
We have $|{}_{\phi}\Sha(E/k)| = |{}_{\hat{\phi}}\Sha(E'/k)| = 2$ and ${}_{2}\Sha(E'/k) = {}_{\phi}\Sha(E/k) \oplus {}_{\hat{\phi}}\Sha(E'/k) $. The non-trivial element in ${}_{\phi}\Sha(E/k)$ and ${}_{\hat{\phi}}\Sha(E'/k)$ are respectively given by the principal homogeneous spaces corresponding to ${2}$ and ${-1}$. Let $D_{2}$ and $D_{-1}$ denote the respective (non-split) quaternion algebras under the isomorphism (\ref{eqn:parimala_sujatha}).
Let us compute  $gen^s([D_{-1}])$.  It is easy to see that $D_{-1}$ is split by $l$ since the class of $-1$ is trivial in $S^{\hat{\phi}}(E'/l)$.  We also have the following commutative diagram where the vertical maps are induced by restrictions
\begin{equation*}
\begin{tikzcd} 
  0 \arrow[r] & E'(k)/\phi(E(k)) \arrow[d, ""] \arrow[r, "i_k"] & S^{\phi}(E/k)\arrow[d, "\theta"] \arrow[r, ""] & {}_{\phi}\Sha(E/k) \arrow[d, ""] \arrow[r] & 0 \\
  0 \arrow[r] & E'(l)/\phi(E(l)) \arrow[r, "i_l"] & S^{\phi}(E/l) \arrow[r, ""] & {}_{\phi}\Sha(E/l) \ar[r] & 0
\end{tikzcd}
\end{equation*}

\noindent   By Theorem \ref{thm:main}, 
\begin{align*}
    gen^s([D_{-1}] \subseteq \{ [D_{-1}], [D_{2}], [D_{-1} \otimes D_{2}]\}
\end{align*}
We will show that  $gen^s([D_{-1}])$ is trivial. So it suffices to show that $D_{2}$ is not split by $l$ or equivalently that $\theta(2)$ is not in the image of $i_l$.  Now suppose $\theta(2)$ is  in the image of $i_l$, say $i_l(P) = \theta(2)$ for some $P \in E'(l) = E'(k)$. Then
\begin{align*}
   \theta(i_k(P)) &= \theta(2)\\
  \implies 2\cdot i_k(P) &\in Ker(\theta) = S^{\phi}(E/k) \cap \{1,-1\} (mod~ (k^*)^2) = 1 (mod~ (k^*)^2)
\end{align*}
Therefore  $i_k(P) =2 (mod~(k^*)^2)$. This contradicts the fact that $2$ is non-trivial in  ${}_{\phi}\Sha(E/k)$ and concludes that  $gen^s([D_{-1}]) = \{ [D_{-1}]\}$.
\end{example}
For a quadratic extension $l/k$, the techniques used in the above example can be generalized to compute genus of classes  in $Br(l/k)$ whenever $E(l) = E(k)$. We demonstrate this below. \\
\indent We will assume that $E$ is split, i.e, $E$ is given by a Weierstrass equation $y^2 = f(x)$ where $f(x)$ has three distinct roots over $k$. In this case $E[2] \subseteq E(k)$ and the $2$-Selmer group $S^{(2)} (E/k)$ is a subgroup of $k(S,2) \times k(S,2)$. Now let $l=k(\sqrt{a})$ be a quadratic extension. Denote by 
\begin{align*}
    \theta : S^{(2)} (E/k) \rightarrow S^{(2)} (E/l)
\end{align*}
the natural map induced by restriction. Identifying $S^{(2)} (E/k)$ as a subgroup of $k(S,2) \times k(S,2)$, we have 
\begin{align} \label{eqn:selmer_kernel}
    H:=ker (\theta) = S^{(2)} (E/k) \cap \{(1,1), (a,1), (1,a), (a,a) \} (mod ~(k^*)^2)
\end{align}

Given $ \gamma \in S^{(2)}(E/k)$, let $D_{\gamma}$ denote the image of $\gamma$ under the composition
\begin{align*}
    S^{(2)}(E/k) \rightarrow {}_{2}\Sha(E/k) \xrightarrow{\simeq} {}_2\Sha^{Br}(E/k)
\end{align*}
where the last arrow comes from the isomorphism (\ref{eqn:parimala_sujatha}).

\begin{lemma}
 Assume that $E$ is split. Let $l=k(\sqrt{a})$ be a quadratic extension such that $E(l) = E(k)$. If $[Q]$ is split over $l(E)$, then 
 \begin{align*}
     gen^s([Q]) \subseteq \{[Q \otimes D_{\gamma}] | \gamma \in H \}
 \end{align*}
 where $H$ is given by (\ref{eqn:selmer_kernel}).  In particular, 
 \begin{align*}
     |gen^s([Q])| \leq 4
 \end{align*}
 and $gen^s([Q])$is trivial if $H$ is trivial.
\end{lemma}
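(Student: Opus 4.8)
The plan is to feed Theorem~\ref{thm:gen2} (which confines $gen^s([Q])$ to a single coset of ${}_2\Sha^{Br}(E/\mathcal E)$) into a splitting-field argument: any element of the genus has the form $[Q\otimes D_\gamma]$, and the hypothesis that $l(E)$ splits $Q$ will force $D_\gamma$ itself to split over $l(E)$; using $E(l)=E(k)$ this pins $\gamma$ down to $H$. Concretely, since $[Q]\in {}_2SBr(K,V)$ with $K=k(E)$, $V=V_{\mathcal E}$, Theorem~\ref{thm:gen2} gives $gen^s([Q])\subseteq [Q]+{}_2\Sha^{Br}(E/\mathcal E)$; and by Theorem~\ref{thm:sha_E} together with the exactness of the $2$-descent sequence $0\to E(k)/2E(k)\to S^{(2)}(E/k)\to {}_2\Sha(E/k)\to 0$, the map $S^{(2)}(E/k)\to {}_2\Sha^{Br}(E/\mathcal E)$, $\gamma\mapsto D_\gamma$, is onto, so every element of $gen^s([Q])$ can be written $\beta=[Q\otimes D_\gamma]$ for some $\gamma\in S^{(2)}(E/k)$. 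Fix such a $\beta\in gen^s([Q])$.

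Now $l(E)=l\otimes_k k(E)$ is a field (as $k(E)/k$ is regular), and it is a separable quadratic extension of $K$ which splits $Q$ by hypothesis. Since a separable quadratic field extension splitting a quaternion division algebra is one of its maximal subfields, $l(E)$ is a separable maximal subfield of $Q$; as $[Q\otimes D_\gamma]\in gen^s([Q])$, it is therefore also a separable maximal subfield of the division algebra underlying $Q\otimes D_\gamma$, so $l(E)$ splits $Q\otimes D_\gamma$. Since $[D_\gamma]=[Q\otimes D_\gamma]-[Q]$ in $Br(K)$ and $l(E)$ splits both terms on the right, $l(E)$ splits $D_\gamma$.

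It remains to see that splitting of $D_\gamma$ over $l(E)$ forces $\gamma$ into $H$, up to the ambiguity in $D_\bullet$. Here $l(E)$ is the function field of $E_l:=E\times_k l$, and $\Sha^{Br}(\cdot)$ together with the isomorphism~(\ref{eqn:parimala_sujatha}) is natural for $k\hookrightarrow l$ and compatible with the descent maps; hence $l(E)$ splits $D_\gamma$ precisely when the restriction of the class of $\gamma$ in $\Sha(E/k)$ to $\Sha(E_l/l)$ is trivial, i.e.\ when $\theta(\gamma)\in S^{(2)}(E/l)$ lies in the image of $E(l)/2E(l)$. Invoking $E(l)=E(k)$ (so that the restriction $E(k)/2E(k)\to E(l)/2E(l)$ is an isomorphism), that image equals $\theta\bigl(i_k(E(k)/2E(k))\bigr)$, so $\theta(\gamma)=\theta(i_k(\delta))$ for some $\delta\in E(k)/2E(k)$; then $\gamma':=\gamma\, i_k(\delta)^{-1}\in\ker\theta=H$ and $D_{\gamma'}=D_\gamma$, since $\gamma$ and $\gamma'$ differ by an element of $\ker\bigl(S^{(2)}(E/k)\to {}_2\Sha(E/k)\bigr)=i_k(E(k)/2E(k))$. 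Thus $\beta=[Q\otimes D_{\gamma'}]$ with $\gamma'\in H$, which gives $gen^s([Q])\subseteq\{[Q\otimes D_\gamma]\mid\gamma\in H\}$; since $H$ is a subgroup of the four-element group $\{(1,1),(a,1),(1,a),(a,a)\}\pmod{(k^*)^2}$ by~(\ref{eqn:selmer_kernel}), we conclude $|gen^s([Q])|\le|H|\le 4$, and $gen^s([Q])=\{[Q]\}$ when $H$ is trivial.

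The step I expect to be the main obstacle is this last translation: making precise that the isomorphism $\Sha^{Br}(E)\simeq\Sha(E)$ is functorial for the base change $k\hookrightarrow l$ and compatible with the Kummer/$2$-descent maps, so that ``$D_\gamma$ is split over $l(E)$'' is literally ``$\theta(\gamma)$ lies in the image of $E(l)/2E(l)$''. A more computational alternative would use the explicit symbol presentation of $D_\gamma$ over $k(E)$ furnished by the split Weierstrass model and argue with the commutative square of $2$-descent sequences for $E/k$ and $E/l$, exactly as in the $y^2=x^3-113x$ example preceding the lemma; in either formulation the crux of the converse inclusion is that $E(l)=E(k)$ leaves no room for $D_\gamma$ to split over $l(E)$ unless $\gamma$ already lies in $H$.
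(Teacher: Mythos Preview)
Your proof is correct and follows essentially the same route as the paper: both use Theorem~\ref{thm:main} (via Theorem~\ref{thm:gen2}) to write an arbitrary genus element as $[Q\otimes D_\gamma]$, then exploit that $l(E)$ splits $Q$ to force $D_\gamma$ to split over $l(E)$, and finally chase the commutative square of $2$-descent sequences together with $E(l)=E(k)$ to land $\gamma$ in $i_k(P)H$. You are in fact more careful than the paper in justifying why $l(E)$ splits $Q\otimes D_\gamma$ (via the shared-maximal-subfields argument), and your identified ``main obstacle'' --- the compatibility of the isomorphism~(\ref{eqn:parimala_sujatha}) with base change --- is precisely the step the paper also takes for granted.
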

\begin{proof} We have the following commutative diagram of exact sequences where the vertical arrows are induced by restrictions.

\begin{equation*}
\begin{tikzcd} 
  0 \arrow[r] & E(k)/2E(k) \arrow[d, ""] \arrow[r, "i_k"] & S^{(2)}(E/k)\arrow[d, "\theta"] \arrow[r, ""] & {}_2\Sha(E/k) \arrow[d, ""] \arrow[r] & 0 \\
  0 \arrow[r] & E(l)/2E(l) \arrow[r, "i_l"] & S^{(2)}(E/l) \arrow[r, ""] & {}_2\Sha(E/l) \ar[r] & 0
\end{tikzcd}
\end{equation*}
By Theorem \ref{thm:main} and the above diagram, any element in   $gen^s([Q])$ is of the form  $[Q \otimes D_{\gamma}]$ for some   $\gamma \in S^{(2)}(E/k)$. So it suffices to show that $\gamma \in i_k(P)H$ for some $P \in E(k)$.  Since $[Q]$ is split over $l(E)$, for any $[Q \otimes D_{\gamma}] \in gen^s([Q])$, $D_{\gamma}$ is split over $l(E)$ which implies $\theta(\gamma )= i_l(P)$ for some $P \in E(l) = E(k)$. Therefore by commutativity of the above diagram, we have
\begin{align*}
\theta(i_k(P)) = \theta(\gamma) \\
\implies \gamma \in i_k(P) H
\end{align*}
which finishes the proof.
\end{proof}

\begin{remark}
    If the answer to Question \ref{question:genus_up} is positive, then by Remark \ref{rmk:genus_up}, all the results and examples of this section hold if quaternions are replaced by division algebras of exponent two 
\end{remark}

\appendix
\section{Curves over global fields}\label{sec:appendix}
Throughout this section $k$ denotes a global field. Let $S$ denote the Dedekind scheme given by  $Spec~\mathcal{O}_k$ where  $\mathcal{O}_k$ is the ring of integers  of $k$ when $char~k =0$ and  a smooth complete curve over its field of constants with function field $k$  when $char~k \neq 0 $. Let $C$ denote a smooth geometrically integral projective curve over $k$. By a theorem of Lipman (\cite{liu}, \S10.1.1  and  Corollary 8.3.51) there exists a regular projective model  for $C$ over  $S$ i.e., there exists  a regular fibered surface $\mathcal{C}$ over  $S$ such that the generic fiber is isomorphic to $C$ i.e, $C\simeq  \mathcal{C} \times_S Spec~k$. Since the map $\mathcal{C} \rightarrow S$ is dominant, $k(\mathcal{C}) \simeq k(C)$.  Therefore, the codimension 1 points  $\mathcal{C}^{(1)}$ in $\mathcal{C}$ give rise to discrete valuations on $k(C)$. For $x \in \mathcal{C}^{(1)}$, let $k(C)_x$ denote the completion of $k(C)$ with respect to this valuation.  Let 
\begin{align*} 
    \Sha^{Br}(C/\mathcal{C}) := ker(Br(k(C)) \rightarrow \prod_{x \in \mathcal{C}^{(1)}} Br(k(C)_x)
\end{align*}
be the kernel of the local global map on the Brauer group with respect to the valuations from the codimension 1 points on $\mathcal{C}$.  One also has the local global map on $Br(C)$ with respect to the places  $P(k)$ of $k$ whose kernel will be denoted by $\Sha^{Br}(C)$.
\begin{align*}
    \Sha^{Br}(C) := ker( Br(C) \rightarrow \prod_{v \in P(k)} Br(C_v))
\end{align*}
Since $C$ is a smooth integral curve, the natural map $Br(C) \rightarrow Br(k(C))$ is injective by Corollary 1.10 in \cite{brauer2}. So we consider $Br(C)$ as a subgroup of $Br(k(C))$.  One may ask how are the groups $\Sha^{Br}(C/\mathcal{C})$ and $\Sha^{Br}(C)$ related as subgroups of $Br(k(C))$. The main result of this section is the following theorem:

\begin{thm} \label{thm:comparison}
 With notations as above,  
 \begin{align*}
     \Sha^{Br}(C/\mathcal{C}) = \Sha^{Br}(C)
     \end{align*}
as subgroups of $Br(k(C))$.  
\end{thm}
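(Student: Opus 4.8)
The plan is to prove the two subgroups of $Br(k(C))$ coincide by showing each is contained in the other. The key point is to compare the valuations coming from codimension one points of $\mathcal{C}$ with the valuations coming from places of $k$, using the structure of the fibration $\mathcal{C} \to S$. The codimension one points $x \in \mathcal{C}^{(1)}$ split into two types: the \emph{horizontal} ones, whose closure in $\mathcal{C}$ is finite over $S$ (these are the closed points of the generic fiber $C$), and the \emph{vertical} ones, which are the generic points of the fibers $\mathcal{C}_v$ over closed points $v \in S$. The horizontal points are exactly the closed points of $C$ and give valuations of $k(C)$ trivial on $k$; the vertical point over $v$ gives a valuation extending the valuation $v$ on $k$.

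First I would handle the inclusion $\Sha^{Br}(C/\mathcal{C}) \subseteq \Sha^{Br}(C)$. Take $\alpha \in Br(k(C))$ unramified at every $x \in \mathcal{C}^{(1)}$. Being unramified at all the horizontal points means, via the purity/Faddeev-type exact sequence for the regular curve $C$ recalled in \S\ref{sec:unramified} (the sequence $0 \to Br(C) \to Br(k(C)) \to \bigoplus_{x \in C^{(1)}} H^1(k(x),\mathbb{Q}/\mathbb{Z})$), that $\alpha$ lies in the subgroup $Br(C)$. Now for each place $v$ of $k$, I must show $\alpha$ maps to $0$ in $Br(C_v)$. Here I would use that $\alpha$ is also unramified at the vertical point $\eta_v$ of $\mathcal{C}$ over $v$, so the class extends over the local ring $\mathcal{O}_{\mathcal{C},\eta_v}$, which is a discrete valuation ring with residue field $k(\mathcal{C}_v)$; then a base change / specialization argument along $\mathcal{C} \times_S \mathcal{O}_v$ shows the image in $Br(C_v)$ is unramified along the special fiber and hence, by another application of the purity sequence now over the two-dimensional regular scheme $\mathcal{C} \times_S \mathcal{O}_v$ (or over $C_v$ together with properness), must vanish. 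The cleanest route is probably: $\alpha \in Br(\mathcal{C})$ because it is unramified at \emph{all} codimension one points of the regular scheme $\mathcal{C}$ (purity for the regular surface $\mathcal{C}$), and then $Br(\mathcal{C}) \to Br(C_v)$ factors through $Br(\mathcal{C} \times_S \mathcal{O}_v) \to Br(\mathcal{C}_v^{\mathrm{closed\ fiber}})$ in a way that kills the class, or more simply restricts through $Br(\mathcal{C} \times_S \mathcal{O}_v)$ and one uses that an element of $Br$ of a proper regular $\mathcal{O}_v$-scheme that is the restriction of something from the total space dies on the generic fiber $C_v$ precisely when... — this is the step I expect to need the most care.

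For the reverse inclusion $\Sha^{Br}(C) \subseteq \Sha^{Br}(C/\mathcal{C})$: take $\alpha \in Br(C)$ with trivial image in $Br(C_v)$ for all $v \in P(k)$. Since $\alpha \in Br(C)$, it is already unramified at every horizontal point of $\mathcal{C}$ (these are the points of $C$), so I only need unramifiedness at each vertical point $\eta_v$. For this I restrict $\alpha$ to $Br(\mathcal{C} \times_S \mathcal{O}_v)$ — it lies in $Br(C)$ hence extends over $\mathcal{C}$, so in particular over $\mathcal{C} \times_S \mathcal{O}_v$ — and the residue of $\alpha$ at $\eta_v$ is computed from this extension; the hypothesis that $\alpha_{|C_v} = 0$ in $Br(C_v)$, together with the fact that $Br(\mathcal{C}\times_S \mathcal{O}_v) \hookrightarrow Br(C_v)$ is injective (again by Corollary 1.10 of the cited \texttt{brauer2}, since $\mathcal{C} \times_S \mathcal{O}_v$ is regular integral with function field $k(C)_v'$ containing... — or by a flatness/limit argument), forces $\alpha$ to vanish on $\mathcal{C} \times_S \mathcal{O}_v$ and hence to be unramified at $\eta_v$.

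\textbf{Main obstacle.} The delicate point in both directions is the precise mechanism linking ``unramified at the vertical point $\eta_v$'' with ``split over $C_v$''. One direction of this is essentially the injectivity of $Br$ of the regular model over $\mathcal{O}_v$ into $Br$ of its generic fiber $C_v$ together with the compatibility of residues with base change $S \leftarrow \mathrm{Spec}\,\mathcal{O}_v$; the other requires knowing that no class in $Br(C)$ can be ramified along a whole special fiber while being unramified at every horizontal point and split generically fiberwise — i.e. a purity statement for the regular arithmetic surface $\mathcal{C}$ and its localizations. I would organize the proof around a single lemma: for $\alpha \in Br(k(C))$ and a closed point $v \in S$, $\alpha$ is unramified at the vertical point $\eta_v \in \mathcal{C}^{(1)}$ if and only if $\alpha \in Br(C)$ \emph{and} $\alpha_{|C_v} \in Br(C_v)$ is unramified at every codimension one point of the regular model $\mathcal{C} \times_S \mathcal{O}_v$, and then invoke purity to upgrade the latter to actual triviality on $C_v$. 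Granting that lemma, the theorem follows by intersecting over all $v$ and using that the horizontal ramification conditions on both sides are literally the condition $\alpha \in Br(C)$.
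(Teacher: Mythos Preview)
Your proposal has a genuine gap: you are conflating membership in $\Sha^{Br}(C/\mathcal{C})$ with being \emph{unramified} at every $x \in \mathcal{C}^{(1)}$. By definition $\Sha^{Br}(C/\mathcal{C}) = \ker\bigl(Br(k(C)) \to \prod_x Br(k(C)_x)\bigr)$, so one needs $\alpha_x = 0$ in each completion, which is strictly stronger than the residue $\rho_x(\alpha)$ vanishing. This derails your argument for $\Sha^{Br}(C) \subseteq \Sha^{Br}(C/\mathcal{C})$: knowing $\alpha \in Br(C)$ only gives unramifiedness at horizontal points, not vanishing in $Br(k(C)_x)$. The paper instead uses Lemma~\ref{lem:specialize} to convert $\alpha_x = 0$ into $\alpha(x) = 0$ in $Br(k(x))$, and then---since $k(x)$ is itself a global field---checks this via the Albert--Brauer--Hasse--Noether theorem, using that $\alpha(x)_w = \alpha_v(x_w) = 0$ for every place $w$ of $k(x)$ lying over some $v \in P(k)$. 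For vertical $x$ over $v$ the paper simply observes $k_v(C) \subseteq k(C)_x$, so $\alpha_v = 0$ forces $\alpha_x = 0$; your version (``forces $\alpha$ to vanish on $\mathcal{C} \times_S \mathcal{O}_v$ and hence to be unramified at $\eta_v$'') again only gets unramifiedness.

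For the other inclusion $\Sha^{Br}(C/\mathcal{C}) \subseteq \Sha^{Br}(C)$ you correctly land in $Br(\mathcal{C})$ by purity, but your sketch never identifies the arithmetic input that makes the image in $Br(C_v)$ vanish. Purity alone cannot do this---it only places classes inside the Brauer group of a scheme, it does not kill them. The paper's argument is that for non-archimedean $v$ the base change $\mathcal{C} \otimes_{\mathcal{O}_k} \mathcal{O}_v$ is a regular proper curve over a complete discrete valuation ring with \emph{finite} residue field, and Grothendieck's Brauer~III gives $Br(\mathcal{C} \otimes_{\mathcal{O}_k} \mathcal{O}_v) = 0$; the restriction to $C_v$ factors through this zero group. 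You also omit the archimedean places entirely: for complex $v$ one uses Tsen, and for real $v$ one passes to the real closure and appeals to the injectivity of specialization at real points of $C_{\tilde{k_v}}$ (cf.\ \cite[Theorem~2.3.1]{parimala_real}). Your proposed organizing lemma about unramifiedness at $\eta_v$ is a red herring---the actual content is vanishing, and it comes from class field theory (ABHN) on one side and the finiteness of residue fields (Brauer~III) on the other.
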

In order to prove the theorem we need a few lemmas. Although the following lemmas are well known, we could not find any explicit references stating these results. Therefore, we include them here for completeness and for the reader's convenience.
\begin{lemma} \label{lem:extn_places}
Let $x \in X$ be a closed point on a scheme $X$ locally of finite type over $k$ with residue field $k(x)$. For a place $v \in P(k)$, let $P_v(k(x))$ denote the valuations on $k(x)$ extending $v$. Then the set of completions $\{ k(x)_w | w \in P_v(k(x))\}$ is equal to the residue fields of the points lying above $x$ under the base change map 
\begin{align*}
    \phi_v: X_v:= X \times_k k_v \rightarrow X
\end{align*}
\end{lemma}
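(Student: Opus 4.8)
The plan is to reduce the statement to the classical description of how a place of a global field decomposes in a finite extension, applied to the residue field $\kappa(x) = k(x)$. First I would observe that the points of $X_v$ lying over $x$ are exactly the points of the scheme-theoretic fiber $\phi_v^{-1}(x) = X_v \times_X Spec\,\kappa(x)$, and that by transitivity of fiber products this fiber is $Spec\bigl(\kappa(x)\otimes_k k_v\bigr)$. For a point $y$ of $X_v$ over $x$, its residue field computed in $X_v$ agrees with the one computed in the fiber, since $\mathcal{O}_{\phi_v^{-1}(x),y} = \mathcal{O}_{X_v,y}/\mathfrak{m}_x\mathcal{O}_{X_v,y}$ is a quotient of $\mathcal{O}_{X_v,y}$ with the same residue field $\kappa(y)$. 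Hence it suffices to identify the residue fields of the finite-dimensional commutative $k_v$-algebra $L \otimes_k k_v$, where $L := \kappa(x)$.

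Next I would use that $x$ is a closed point of a scheme locally of finite type over the field $k$, so by Zariski's lemma $L/k$ is a finite extension. Therefore $L \otimes_k k_v$ is Artinian, i.e. a finite product of local rings, and the set of residue fields of $Spec(L\otimes_k k_v)$ is precisely the set of residue fields of these local factors, each a finite extension of $k_v$. The compatibility with the structure map (i.e. with $\kappa(x) \to \kappa(y)$) comes for free, since each factor receives the canonical map from $L$.

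The heart of the argument is then the classical fact that the maximal ideals of $L\otimes_k k_v$ are in canonical bijection with the places $w \in P_v(L)$ extending $v$, the residue field attached to $w$ being the completion $L_w$; equivalently $(L\otimes_k k_v)_{\mathrm{red}} \cong \prod_{w\mid v} L_w$, compatibly with $L$. When $L/k$ is separable (automatic in the number field case) one in fact has $L\otimes_k k_v \cong \prod_{w\mid v} L_w$ on the nose. I would invoke this in the number field case from a standard reference (Cassels--Fr\"ohlich, or Neukirch, \emph{Algebraic Number Theory}, Ch.~II) and, in the global function field case, deduce it from the analogous decomposition of the completion of the integral closure over a Dedekind domain (Serre, \emph{Local Fields}, Ch.~II). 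Combining the three steps yields the claimed equality of sets of fields.

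The main obstacle I anticipate is purely expository rather than mathematical: one must make sure the cited form of ``$L\otimes_k k_v = \prod_{w\mid v} L_w$'' is quoted in sufficient generality to allow a possibly inseparable residue extension $\kappa(x)/k$ in the function field case, where $L\otimes_k k_v$ need not be reduced and one must pass to $(L\otimes_k k_v)_{\mathrm{red}}$. Since only the residue fields are at issue, and nilpotents do not affect them, this causes no real difficulty; the remaining points (locally-of-finite-type $\Rightarrow$ closed points have finite residue field, and residue fields of a point of $X_v$ versus of the fiber coincide) are standard.
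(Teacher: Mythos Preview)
Your proposal is correct and follows essentially the same route as the paper: reduce to the fiber $Spec(k(x)\otimes_k k_v)$, use finiteness of $k(x)/k$ to make this Artinian, and invoke the classical identification of its residue fields with the completions $k(x)_w$ for $w\mid v$. The only cosmetic difference is in the inseparable case: the paper factors $k(x)/k$ through its separable closure and treats the purely inseparable step via uniqueness of the extension of $v$, whereas you pass to $(L\otimes_k k_v)_{\mathrm{red}}$; both arguments arrive at the same conclusion.
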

\begin{proof}
The closed point $x$ corresponds to $Spec~k(x) \hookrightarrow X$ where $k(x)$ is a finite  extension of $k$. The residue fields of  points lying above $x$  in $X_v$ are precisely  $\{L_i\}$  where  $k(x) \otimes_k k_v \simeq \prod L_i$. When $k(x)/k$ is separable,  by  \cite[Propositon 8.1 and 8.2]{milneANT},  $\{L_i\}$ is the set of completions of $k(x)$ with respect to possible extensions of $v$ to $k(x)$ which proves the lemma. When $k(x)/k$ is purely inseparable, $v$ extends uniquely   to $k(x)$ (\cite[Theorem 4.1]{defect_kuhlmann}) and the lemma follows. The general case is derived using the above two cases. 
\end{proof}

Let $X$ be a regular integral scheme   with function field $k(X)$. For a codimension 1 point  $x \in X^{(1)}$, let $k(x)$ denote the residue field of $x$ and  $k(X)_x$ denote the completion of $k(X)$ with respect to the discrete valuation associated to $x$.  For  $\alpha \in Br(X)$ and $x \in  X^{(1)}$,  let $\alpha_x$ and $\alpha(x)$ denote respectively the image of $\alpha$ under the restriction maps given by 
\begin{align*}
     r_{X,x}: Br(X) \rightarrow Br(k(X))
    &\rightarrow Br(k(X)_x) \\
    \alpha &\mapsto \alpha_x\\
    s_{X,x}: Br(X) \rightarrow Br(\mathcal{O}_{X,x}) &\rightarrow Br(k(x)) \\
    \alpha &\mapsto \alpha(x)
\end{align*}

\begin{lemma} \label{lem:specialize}
With notations as above, for  $\alpha \in Br(X)$, $\alpha_x = 0 \iff \alpha(x) = 0$
\end{lemma}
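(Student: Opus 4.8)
The plan is to use the fact that $\mathcal{O}_{X,x}$ is a regular local ring of dimension one, hence a discrete valuation ring, whose fraction field is $k(X)$ and whose residue field is $k(x)$. The completion $\widehat{\mathcal{O}_{X,x}}$ is then a complete DVR with the same residue field $k(x)$ and fraction field $k(X)_x$. The forward implication is immediate: if $\alpha(x) = 0$, then the image of $\alpha$ in $Br(\mathcal{O}_{X,x})$ already dies after restriction to $Br(k(x))$; but by the purity / exact sequence for the Brauer group of a DVR (the residue map $Br(\text{frac}) \to H^1(k(x),\mathbb{Q}/\mathbb{Z})$ has kernel $Br(\mathcal{O}_{X,x})$, and the map $Br(\mathcal{O}_{X,x}) \to Br(k(x))$ is injective for a Henselian — in particular complete — DVR), one sees $\alpha$ comes from an unramified class and the residue field value detects triviality. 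Concretely, first I would pass to the complete local ring: $Br(\mathcal{O}_{X,x}) \to Br(\widehat{\mathcal{O}_{X,x}})$ and note $Br(\widehat{\mathcal{O}_{X,x}}) \xrightarrow{\sim} Br(k(x))$ by Hensel/specialization (this is exactly the isomorphism (\ref{eqn:unramified_residue}) read as $IBr(k(X)_x) \simeq Br(k(x))$, since a class in $Br(X)$ restricts to an unramified class at $x$).

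The key steps, in order, are: (1) observe $\alpha \in Br(X)$ restricts to a class in $Br(\mathcal{O}_{X,x})$, i.e. $\alpha_x \in IBr(k(X)_x)$ is unramified at $x$; (2) invoke the isomorphism $IBr(k(X)_x) \simeq Br(k(x))$ of (\ref{eqn:unramified_residue}), under which $\alpha_x$ corresponds precisely to the residue algebra $\overline{(\alpha_x)}$; (3) identify $\overline{(\alpha_x)}$ with $\alpha(x)$, i.e. check that the specialization map $s_{X,x}$ composed with nothing is the same as first restricting to $k(X)_x$ and then taking the residue — this is a compatibility of the two ways of "evaluating at $x$" and follows from functoriality of $Br(-)$ applied to the diagram $\mathcal{O}_{X,x} \to \widehat{\mathcal{O}_{X,x}}$ together with the reductions $\mathcal{O}_{X,x} \to k(x)$ and $\widehat{\mathcal{O}_{X,x}} \to k(x)$; (4) conclude that $\alpha_x = 0$ in $Br(k(X)_x)$ if and only if its image under the isomorphism of (2) is $0$ in $Br(k(x))$, which is $\alpha(x) = 0$.

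I expect the main obstacle to be step (3): carefully checking that the residue algebra of the unramified class $\alpha_x$ really equals $\alpha(x)$, i.e. that the two restriction routes $Br(X) \to Br(\mathcal{O}_{X,x}) \to Br(k(x))$ and $Br(X) \to Br(k(X)) \to Br(k(X)_x) \to Br(\overline{k(X)_x}) = Br(k(x))$ agree. This is a diagram chase using that $\widehat{\mathcal{O}_{X,x}}$ maps to both $k(X)_x$ (localization at the generic point) and $k(x)$ (reduction mod the maximal ideal), and that $Br(\widehat{\mathcal{O}_{X,x}}) \to Br(k(x))$ is the inverse of the residue-algebra identification on $IBr(k(X)_x)$. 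Everything else is a formal consequence of the structure theory of tame/unramified Brauer classes over complete discrete valued fields recalled in \S\ref{sec:cdvf} and \S\ref{sec:unramified}; no new ideas should be needed beyond organizing these compatibilities.
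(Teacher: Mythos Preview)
Your proposal is correct and is essentially the paper's own argument: the paper organizes exactly the same facts into a commutative diagram linking $Br(X)$, $Br(\mathcal{O}_{X,x})$, $Br(\hat{\mathcal{O}}_{X,x})$, $Br(k(X)_x)$, and $Br(k(x))$, and then uses that $Br(\hat{\mathcal{O}}_{X,x}) \hookrightarrow Br(k(X)_x)$ is injective while $Br(\hat{\mathcal{O}}_{X,x}) \to Br(k(x))$ is an isomorphism---these are precisely your steps (1)--(4), with your ``compatibility'' in step (3) being the commutativity of the diagram.
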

\begin{proof}
This follows from the commutativity of the following diagram where  $\hat{\mathcal{O}}_{X,x}$ denotes the completion of $\mathcal{O}_{X,x}$.  The maps $\imath$, $\jmath$ are injective (Corollary 1.10 in \cite{brauer2}) and $q$ is an isomorphism (\cite[Theorem 31]{azumaya}).

\begin{center}
\begin{tikzpicture}[thick] 
  \node (A) {$Br(k(X))$};
  \node (B) [right of=A] {$Br(k(X)_x)$};
  \node (D) [below of=B] {$Br(\hat{\mathcal{O}}_{X,x})$};
\node (E) [left of = A] {$Br(X)$};
  \node (C) [below of=E] {$Br(\mathcal{O}_{X,x})$};
\node (F) [below of = C] {$Br(k(x))$};

\draw[->] (E) to node {} (C);
  \draw[->] (A) to node {} (B);
\draw[right hook->] (E) to node {$\imath$} (A);
  \draw[->] (C) to node   {}(D);
  \draw[right hook->] (D) to node {$\jmath$} (B);
   \draw[->] (C) to node {} (F);
    \draw[->] (D) to node {$q$}  node[swap] {$\cong$}(F);
    \draw[->] (E) to[bend right] node {$r_{X,x}$} (B);
    \draw[->] (E) to[bend right =50] node {$s_{X,x}$} (F);
 \end{tikzpicture}
 \end{center}
\end{proof}

We are now ready to prove the main theorem of this section.\\

\noindent \emph{Proof of Theorem \ref{thm:comparison}:} First we will show the inclusion
\begin{align*}
     \Sha^{Br}(C/\mathcal{C}) \supseteq \Sha^{Br}(C)
     \end{align*}
 Let $\alpha \in Br(C)$ be such that $\alpha_v = 0$  for every  $v \in P(k)$, where $\alpha_v$ is the image of $\alpha$ under $Br(C) \rightarrow Br(C_v)$. We need to show that $\forall x \in \mathcal{C}^{(1)}$, $\alpha_x$ is zero. \\
 \indent Suppose $x$ corresponds to a vertical divisor in the fiber corresponding to $v\in P(k)$. Then, $k_v(C) \subseteq k(C)_x $ which yields the required result.\\
 \indent On the other hand, if $x$ corresponds to a horizontal divisor of $\mathcal{C}$, then $x$ is a closed point in $C$. By  Lemma \ref{lem:specialize}, we need to show that $\alpha(x) \in Br(k(x))$ is trivial.  Let $w$ be a valuation on in $k(x)$ that restricts to the valuation $v$ on $k$.  Denote by $x_w \in \mathcal{C}_v$, the point corresponding to $k(x)_w$ (Lemma \ref{lem:extn_places}). Then  $\alpha(x)_w= \alpha_v(x_w)=0$ since $\alpha_v = 0$  for every  $v \in P(k)$. Therefore $\alpha(x)$ is in the kernel of the local-global map
 \begin{align*}
    Br(k(x)) \rightarrow \prod_{v \in P(k(x))} Br(k(x)_v)
\end{align*}
which is trivial by the classical Albert-Brauer-Hasse-Noether Theorem for the number field case and its generalization by Hasse  for global fields (\cite[Corollary 6.5.4]{gille_sam}). \\
\indent We will now show 
\begin{align*}
     \Sha^{Br}(C/\mathcal{C}) \subseteq \Sha^{Br}(C)
\end{align*}
Suppose $\alpha \in \Sha^{Br}(C/\mathcal{C})$.  Then note that $\alpha$ is unramified with respect to every $x\in \mathcal{C}^{(1)}$ and hence $\alpha \in Br(\mathcal{C})$.  Let $v \in P(k)$ be non-archimedean.  Now $\mathcal{C}_v := \mathcal{C} \otimes_{\mathcal{O}_k} \mathcal{O}_v$ is a regular projective model for $C_v$ over $\mathcal{O}_v$ ( \cite[\href{https://stacks.math.columbia.edu/tag/0BG4}{Tag 0BG4}, Theorem 54.11.2]{stacks-project}). Then $\alpha \otimes_{\mathcal{C}} C_v \simeq (\alpha \otimes_{\mathcal{C}} \mathcal{C}_v) \otimes_{\mathcal{C}_v} C_v = 0$ since Brauer group of a regular proper curve over a complete discrete valued ring  with finite residue field is trivial (\cite[Theorem 3.1 and Remark 2.5(b)]{brauer3}). \\
\indent Now let $v$ be archimedean. If $k_v \simeq \mathbb{C}$, then  since $Br(C_v) = 0$ by Tsen's theorem, $\alpha \otimes_{\mathcal{C}} C_v =0$. Suppose $k_v \simeq \mathbb{R}$. Let $\tilde{k_v}$ denote the real closure of $k$ with respect to $v$.  It suffices to show that $\tilde{\alpha}_v : =\alpha \otimes C_{\tilde{k_v}} = 0$. By \cite[Theorem 2.3.1]{parimala_real}, this is equivalent to showing that $\tilde{\alpha}_v(y)=0$ for every real closed point $y \in  C_{\tilde{k_v}}$.  Now let $x$ be the image of $y$ under the canonical map $ C_{\tilde{k_v}} \rightarrow C$. 
Note that $x$ is a closed point of codimension 1.  Since $\alpha \in  \Sha^{Br}(C/\mathcal{C})$, $\alpha_x =0$. Hence by Lemma \ref{lem:specialize}, $\alpha(x) =0$. Therefore $\tilde{\alpha}_v(y) \simeq \alpha(x) \otimes_{k(x)} k(y) = 0$.

\section{Totally ramified separable subfields in NSR algebras} \label{appendix:totram}

Let $k$ be a complete discrete valued field with value group $\mathbb{Z}$ and residue field $\overline{k}$. Let $v$ denote the valuation on $k$. Let $N$ be a  NSR division algebra over $k$.   The main goal of this section is to show that $N$ contains a totally ramified separable maximal subfield.  If $char~k =0$ or is coprime to $deg(N)$, the every subfield of $N$ is separable and the claim follows since $N$ contains a totally ramified maximal subfield by definition. So we may assume that $(char~k, deg(N)) = p$. \\
 \indent Recall  from Proposition \ref{prop:nsr} that $N$ is a cyclic algebra and hence contains some cyclic maximal subfield. The next theorem sheds some light on the ramification. 
 \begin{thm}\label{thm:app:totram}
    Let $D$ be a NSR division algebra over $k$ containing a totally ramified purely inseparable maximal subfield. Then $D$ contains a totally ramified cyclic maximal subfield.
\end{thm}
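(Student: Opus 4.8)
The plan is as follows. First note that the hypothesis already fixes the degree of $D$: a totally ramified purely inseparable maximal subfield $M\subseteq D$ has $[M:k]=\deg D$ equal to a power of $p$, say $\deg D=p^{a}$, and it accounts for the whole ramification, so $e_{D}=p^{a}$ as well. If $a=0$ there is nothing to prove, so assume $a\ge 1$. By Proposition \ref{prop:nsr}, $D$ is cyclic, and combining this with the structure theory recalled in \S\ref{sec:cdvf} we may write $D=(L/k,\sigma,\beta)$, where $L$ is the inertial lift of $\overline{D}$ (an unramified cyclic extension of $k$ of degree $p^{a}$), $\sigma$ generates $\mathrm{Gal}(L/k)$, and $\beta\in k^{\ast}$. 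Since $D$ is a division algebra with $L$ as its (maximal) unramified subfield and $e_{D}=p^{a}$, necessarily $p\nmid v(\beta)$, and after replacing $\sigma$ by a suitable power and $\beta$ by a norm multiple we may assume $\beta=\pi$ is a uniformizer of $k$. Thus $D=\bigoplus_{i=0}^{p^{a}-1}Le^{i}$ with $e^{p^{a}}=\pi$ and $e\ell e^{-1}=\sigma(\ell)$ for $\ell\in L$; here $k(e)=k(\pi^{1/p^{a}})$ is precisely a totally ramified purely inseparable maximal subfield, so the hypothesis is in fact automatic for NSR $D$ and the real content is to produce a totally ramified maximal subfield that is \emph{separable}. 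Writing $\wp(t)=t^{p}-t$, when $a=1$ it suffices to find $z\in D$ with $z^{p}-z=c\in k$, $v(c)<0$ and $p\nmid v(c)$: then $k(z)=k(\wp^{-1}(c))$ is Artin--Schreier, hence cyclic, of degree $p=\deg D$, so it is a maximal subfield, and it is totally ramified because $v(z)=v(c)/p$ generates $\tfrac1p\mathbb{Z}\supsetneq\mathbb{Z}$.

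For $a=1$ this $z$ can be written down explicitly. In the above presentation $L=k(x)$ with $x^{p}-x=a_{0}$ ($v(a_{0})\ge 0$, $\overline{a_{0}}\notin\wp(\overline{k})$) and, choosing $\sigma$ so that $\sigma(x)=x+1$, we have $exe^{-1}=x+1$. From this, $e^{-1}xe=x-1$, whence the one commutator relation
\[
[x,e^{-1}]\ =\ xe^{-1}-e^{-1}x\ =\ e^{-1},
\]
and an immediate induction gives $\mathrm{ad}_{tx+e^{-1}}^{\,j}(x)=-t^{\,j-1}e^{-1}$ for all $j\ge 1$. Now set $z=x+e^{-1}$. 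By Jacobson's formula for $p$-th powers in characteristic $p$, $z^{p}=x^{p}+(e^{-1})^{p}+\sum_{i=1}^{p-1}s_{i}(x,e^{-1})$ with $\sum_{i=1}^{p-1}i\,s_{i}(x,e^{-1})\,t^{\,i-1}=\mathrm{ad}_{tx+e^{-1}}^{\,p-1}(x)=-t^{\,p-2}e^{-1}$, so $s_{i}(x,e^{-1})=0$ for $i<p-1$ and $s_{p-1}(x,e^{-1})=e^{-1}$. Since $x^{p}=x+a_{0}$ and $(e^{-1})^{p}=\pi^{-1}$, this yields
\[
z^{p}-z\ =\ (x+a_{0})+\pi^{-1}+e^{-1}-(x+e^{-1})\ =\ a_{0}+\pi^{-1}\ \in\ k,
\]
and $v(a_{0}+\pi^{-1})=-1$. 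Hence $k\bigl(\wp^{-1}(a_{0}+\pi^{-1})\bigr)\subseteq D$ is a totally ramified cyclic maximal subfield, proving the theorem when $a=1$.

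For $a\ge 2$ the same mechanism should work with the Artin--Schreier description of $L$ replaced by an Artin--Schreier--Witt one: write $L=k(\vec{\alpha})$ for a length-$a$ Witt vector $\vec{\alpha}$ with $\wp(\vec{\alpha})=\vec{a}$ over the valuation ring of $k$, $\overline{a_{0}}\notin\wp(\overline{k})$, and with $\sigma$ acting by Witt addition of $(1,0,\dots,0)$; the expected totally ramified cyclic maximal subfield is then $k(\vec{\beta})$, where $\vec{\beta}$ is the formal Witt sum of $\vec{\alpha}$ and $(e^{-1},0,\dots,0)$, so that its zeroth component is $\alpha_{0}+e^{-1}$ and $\wp(\vec{\beta})$ is a Witt vector over $k$ whose zeroth component has valuation $-1$. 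I expect this to be the main obstacle: the Witt addition producing $\vec{\beta}$ and the verification of $\wp(\vec{\beta})$ must be carried out inside the non-commutative ring $D$, where the usual Witt polynomials are not literally available, so one must either lift the computation to a torsion-free overring, develop a non-commutative Witt calculus adapted to the relation $[x,e^{-1}]=e^{-1}$ that drove the case $a=1$, or set up an induction on $a$ — and in the last approach one has to resist peeling a $\mathbb{Z}/p\mathbb{Z}$ off an unramified or purely inseparable subextension, since a totally ramified cyclic extension of degree $p^{a}$ is not a compositum of smaller unramified and totally ramified pieces. In every case the displayed computation for $a=1$ is the template. Finally, a totally ramified cyclic maximal subfield is in particular separable, which is the form of the statement used in Proposition \ref{prop:genusramified} and Theorem \ref{thm:genus_cdvf}.
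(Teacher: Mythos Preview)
Your argument for $a=1$ is correct and is, in effect, the $n=1$ instance of the paper's proof carried out by a direct Jacobson-formula computation rather than via symbol identities. (One small point: in general you can only arrange $v(\beta)=1$, not $\beta=\pi$, since the norm from an unramified extension need not hit every unit of $k$; this does not affect your computation.)

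For $a\ge 2$, however, your proof is genuinely incomplete, as you yourself say: you only outline a program (form the Witt sum of $\vec{\alpha}$ with $(e^{-1},0,\dots,0)$ inside $D$) and then list the obstacles without resolving them. The paper avoids the non-commutative Witt calculus altogether by changing the presentation of $D$. Since $D$ contains a totally ramified purely inseparable maximal subfield $k(y)$ with $y^{p^{n}}=a$ and $(v(a),p)=1$, Albert's theorem for $p$-algebras gives $D\cong[\omega,a)$ for some $\omega\in W_{n}(k)$. After arranging $v(a)<\min(0,v(\omega_{1}))$, the symbol identity $[(a,0,\dots,0),a)=0$ lets one replace $\omega$ by $\omega'=\omega+(a,0,\dots,0)$ in $W_{n}(k)$, so that $D\cong[\omega',a)$. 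The Artin--Schreier--Witt extension $L=k(\wp^{-1}(\omega'))$ is then a cyclic maximal subfield of $D$ described entirely in commutative terms, and one checks it is totally ramified by looking at its unique degree-$p$ subextension $k(x_{1})$, which is Artin--Schreier for $\omega_{1}+a$ of negative prime-to-$p$ valuation and hence ramified; were $L/k$ not totally ramified, the inertial lift of the degree-$p$ piece of $\overline{L}\subseteq\overline{D}$ would give an \emph{unramified} degree-$p$ subfield of $L$, a contradiction. The point is that the ``twist by $a$'' you are trying to perform with $e^{-1}$ inside $D$ is instead performed at the level of Brauer classes, where it is a commutative Witt-vector addition and the usual Witt polynomials apply verbatim.
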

\begin{proof}
Since $D$ contains a purely inseparable maximal subfield,  $deg(D) = p^n$  for some $n$ where $p=char~k$. Let $F$ be a totally ramified purely inseparable maximal subfield of  $D$. Then there exists an element $y\in F$ with $v(y) = \frac{1}{p^n}\mathbb{Z}$. Therefore $y$ generates $F$ over $k$ i.e, $F\simeq k(y)$ where $y^{p^n} = a$ for some $a \in k^*$ with $(v(a),p) =1$.    By  (\cite[Chapter VII, Theorem 26]{albert}), $D$ is isomorphic to the symbol algebra $[\omega, a)$  where $\omega  = (\omega_1, \omega_2, \cdots, \omega_n) \in W_n(k)$, the group of truncated Witt vectors of length $n$.  By symbol manipulation techniques (\cite[Satz 15, 16]{witt}, \cite{teichmuller}, \cite[Proposition 1]{mamm_merk}),  we may assume that $v(a)<min(0, v(\omega_1))$ and 
\begin{align*}
    D \cong [\omega', a):=[(\omega + (a,0,0 \cdots 0) , a)]
     \end{align*}
 where $ \omega' = (\omega_1 +a, \omega_2', \cdots, \omega_n') \in W(k)$ for some $\omega_2', \cdots, \omega_n' \in k$. Consider the Artin-Schreier-Witt extension  $L$ corresponding to $\omega'$. Then $L \simeq  k(x_1, x_2, \cdots, x_n)$ where $(x_1^p, x_2^p, \cdots, x_n^p)- (x_1, x_2, \cdots, x_n) = \omega'$  in $W(L)$. We claim that $L$ is totally ramified which will yield the theorem. To see this, let $\overline{L}$ denote the residue field of $L$. Suppose $L/k$ is not totally ramified, then    $\overline{L} \subseteq \overline{D}$ is a non-trivial extension of $\overline{k}$. But $D$ is NSR, so $\overline{D}$ is  a field that is a cyclic extension (\cite[Lemma 5.1]{wadsworth_henselian}) of $\overline{k}$ of degree $p^n$. Let $m \subset \overline{L} \subseteq \overline{D}$ be the unique degree $p$ extension over $\overline{k}$ and let $M \subset L$ be the inertial lift  of $m$ \cite[Chapter III, \S5, Corollary 2]{serre_local}).  Therefore $M$ is an unramified  degree $p$ extension of $k$ inside $L$. But $L/k$ is cyclic and contains the unique degree $p$ extension $k(x_1)$ defined by 

\begin{align*}
x_1^p - x_1 = \omega_1 + a
\end{align*}
By the assumption on $v(a)$, we have  $v(x_1) = \frac{1}{p}v(a)$. Since  $(v(a), p ) =1$, the extension  $k(x_1)$ is ramified over $k$ leading to  contradiction. Therefore $L/k$ is totally ramified. 
 \end{proof}

 \begin{cor}
     Let $N$ be a  NSR division algebra over $k$. Then $N$ contains a totally ramified separable maximal subfield.
 \end{cor}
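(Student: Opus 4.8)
\emph{Proof plan.} As the excerpt already notes, when $char~k = 0$ or $char~k$ is prime to $deg(N)$ every subfield of $N$ is separable, and $N$ contains a totally ramified maximal subfield by the definition of NSR, so there is nothing to prove. Assume therefore $char~k = p > 0$ and $deg(N) = p^{a}m'$ with $a \geq 1$ and $(m',p) = 1$. The plan is to split off the wild part via the primary decomposition of $N$ in $Br(k)$, handle it with Theorem \ref{thm:app:totram}, handle the tame part by hand, and then recombine the two totally ramified maximal subfields into one by taking their compositum.

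First I would write $N \cong N_p \otimes_k N'$, where $N_p$ is the $p$-primary component and $N'$ the prime-to-$p$ component of $[N] \in Br(k)$; since $deg(N_p) = p^{a}$ and $deg(N') = m'$ are coprime, the tensor product of these two division algebras is again a division algebra, so $N_p$ and $N'$ are literally the wild and tame ``halves'' of $N$. The first substantive step is to check that $N_p$ and $N'$ are again NSR. Since $N$ is NSR it is split by an unramified maximal subfield, hence tame, and therefore so are $N_p \sim N^{\otimes t}$ and $N' \sim N^{\otimes t'}$ for suitable $t, t'$. For NSR $N$ the fundamental equality (\ref{eqn:fund_equality}) gives $e_N = \sqrt{[N:k]} = deg(N)$, so the associated residue character $\chi_N$ has order $deg(N)$; computing the orders of $t\chi_N$ and $t'\chi_N$ yields $e_{N_p} = p^{a}$ and $e_{N'} = m'$, and then (\ref{eqn:fund_equality}) again gives $[\overline{N_p} : \overline{k}] = p^{a}$ and $[\overline{N'} : \overline{k}] = m'$. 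Feeding this into the Wadsworth--Henselian decomposition \cite[Lemma 5.14, Theorem 5.15]{wadsworth_henselian} of $N_p$ (resp. $N'$) as (unramified) $\otimes$ (NSR), the unramified factor is forced to have trivial residue algebra, hence to be split; therefore $N_p$ and $N'$ are themselves NSR.

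Next I would treat the two pieces separately. Since $deg(N') = m'$ is prime to $char~k$, every subfield of $N'$ is separable, and being NSR it contains a totally ramified maximal subfield $T'$, which is then automatically separable, of degree $m'$. For the wild piece $N_p$: it is NSR of degree $p^{a}$, hence cyclic by Proposition \ref{prop:nsr}, with unramified cyclic maximal subfield $L$ (the inertial lift of $\overline{N_p}$); by the structure theory of nicely semiramified algebras (\cite{wadsworth_henselian}, \cite{karim_nsr}) one has $N_p \cong (L/k, \sigma, \pi)$ for a uniformizer $\pi$ of $k$, and the standard generator $z$ (with $z^{p^{a}} = \pi$ and $z\ell z^{-1} = \sigma(\ell)$) generates inside $N_p$ a field $k(z)$ that is purely inseparable over $k$ (because $X^{p^{a}} - \pi = (X - z)^{p^{a}}$ in characteristic $p$), of degree $p^{a} = deg(N_p)$ (because $\pi \notin (k^{*})^{p}$), and totally ramified (because $v(z) = v(\pi)/p^{a}$ generates $\Gamma_{k(z)}/\Gamma_k$). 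Thus $N_p$ contains a totally ramified purely inseparable maximal subfield, so Theorem \ref{thm:app:totram} applies and produces a totally ramified cyclic --- in particular separable --- maximal subfield $T_p$ of $N_p$ of degree $p^{a}$.

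Finally I would recombine. As $[T_p : k] = p^{a}$ and $[T' : k] = m'$ are coprime, $T_p$ and $T'$ are linearly disjoint over $k$, so $T_p T'$ is a separable extension of $k$ of degree $p^{a}m' = deg(N)$; since the ramification indices $p^{a}$ and $m'$ are coprime and multiply to $[T_p T' : k]$, the extension $T_p T'/k$ is totally ramified. Since $T_p$ splits $N_p$ and $T'$ splits $N'$, the field $T_p T'$ splits $N = N_p \otimes_k N'$, and a field extension of $k$ of degree $deg(N)$ that splits $N$ is $k$-isomorphic to a maximal subfield of $N$. Hence $T_p T'$ embeds in $N$ as a totally ramified separable maximal subfield, as desired. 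The step I expect to be the main obstacle is the wild piece: one must know that the $p$-primary component $N_p$ genuinely contains a \emph{purely inseparable} totally ramified maximal subfield, i.e. that its totally ramified part is cut out by a uniformizer and not merely by an element whose value generates $\Gamma_{N_p}/\Gamma_k$. This is precisely where the normal form $N_p \cong (L/k, \sigma, \pi)$ for nicely semiramified algebras enters, and --- together with the routine but slightly fiddly verification that the primary components of an NSR algebra are again NSR --- it is the technical heart of the argument.
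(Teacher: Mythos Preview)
Your proof is correct and takes a genuinely different route from the paper's. The paper does not use the primary decomposition at all: it starts from an arbitrary totally ramified maximal subfield $L\subset N$, factors $L/k$ as a separable-then-purely-inseparable tower $k\subseteq F\subseteq L$, and passes to the centralizer $D=C_N(F)$. This $D$ is a tame (in fact NSR) division algebra over $F$ containing $L$ as a totally ramified purely inseparable maximal subfield, so Theorem~\ref{thm:app:totram} applies directly to $D$ over $F$; the resulting totally ramified cyclic subfield $M/F$ is then automatically totally ramified and separable over $k$. No Brauer-group splitting, no verification that the primary pieces are again NSR, and no recombination of subfields is needed.

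Your approach is more modular, cleanly isolating the wild and tame contributions, at the cost of the bookkeeping you flag. Two small simplifications are available. First, the normalization of $b$ to a uniformizer is unnecessary: once $N_p\cong (L/k,\sigma,b)$ with $L/k$ unramified cyclic of degree $p^{a}$, the condition $e_{N_p}=p^{a}$ forces $(v(b),p)=1$, and then the standard generator $z$ with $z^{p^{a}}=b$ already has $v(z)=v(b)/p^{a}$ of exact denominator $p^{a}$, so $k(z)/k$ is totally ramified of degree $p^{a}$ and purely inseparable --- no need for $v(b)=1$. Second, the check that $N_p$ and $N'$ are NSR can bypass the $I\otimes N$ decomposition: you have already shown their residue algebras are fields, and a tame division algebra with field residue is NSR (it contains the inertial lift of its residue as an unramified maximal subfield, and any element of minimal value generates a totally ramified one). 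The paper's argument buys brevity; yours buys a transparent separation of the characteristic-$p$ obstruction from the rest.
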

 \begin{proof}
     Let $L \subset N$ be a totally ramified maximal subfield. Then there is a tower of extensions $k \subseteq F\subseteq L$ where $F/k$ is separable and $L/F$ is purely inseparable (\cite[\href{https://stacks.math.columbia.edu/tag/030K}{Lemma 030K}]{stacks-project}). Note that both $F/k$ and $L/F$ are totally ramified. Let $D:= C_N(F)$ be the centralizer of $F$ in $N$. Then $D$ is a division algebra over $F$  and $D \simeq N\otimes_k F$(\cite[\S13.3, Lemma]{pierce_assoc}). Let $K/k$ be an unramified splitting field of $N$. Then $FK/F$ is  unramified (\cite[Chapter II, \S4, Proposition 8(ii)]{lang_number_theory}) and splits  $D$. Therefore $D$ is tame. Now $D$ contains the totally ramified  purely inseparable  subfield $L$ which is a maximal subfield by the Double Centralizer Theorem (\cite[\S12.7 Theorem]{pierce_assoc}). Therefore by Theorem \ref{thm:app:totram}, $D$ contains a cyclic totally ramified maximal subfield $M/F$. Since $F/k$ is separable, $M/k$ is a totally ramified separable maximal subfield in $N$.
 \end{proof}
\section*{Acknowledgements}
The author acknowledges the support of the DAE, Government of India, under Project Identification No. RTI4001. She is grateful to A. Rapinchuk for the numerous email conversations on this topic and his valuable feedback. She also thanks R. Parimala  for pointing to the relevant literature and  her useful comments and suggestions on the Appendix. Finally, she thanks the referee for the corrections and improvements. 

\nocite*{}
\bibliographystyle{alpha}
\bibliography{ref_quaternions}
\end{document}